\newcommand{\CC}{\mathbb C}
\newcommand{\PP}{\mathbb P}
\newcommand{\ZZ}{\mathbb Z}
\newcommand{\mcA}{\mathcal{A}}
\newcommand{\mcB}{\mathcal B}
\newcommand{\mcC}{\mathcal C}
\newcommand{\mcF}{\mathcal F}
\newcommand{\mcG}{\mathcal {G}}
\newcommand{\mcL}{\mathcal L}
\newcommand{\mcO}{\mathop {\mathcal O}\nolimits}
\newcommand{\mcS}{\mathcal{S}}
\newcommand{\mcT}{\mathcal T}
\newcommand{\mcW}{\mathcal{W}}
\DeclareMathOperator{\HH}{H}
\newcommand{\gtS}{{\mathfrak S}}
\newcommand{\Sing}{\mathop {\rm Sing}\nolimits}
\newcommand{\GCD}{\mathop {\rm GCD}\nolimits}
\newcommand{\Aut}{\mathop {\rm {Aut}}\nolimits}
\newcommand{\id}{\mathop {{\rm id}}\nolimits}
\newcommand{\parti}{\mathfrak{p}}
\newcommand{\Parti}{\mathfrak{P}}
\newcommand{\sm}{\mathrm{sm}}
\newcommand{\pr}{\mathrm{pr}}
\newcommand{\Irr}{\mathrm{Irr}}
\newcommand{\LB}{\mathrm{LB}}
\newcommand{\Stab}{\mathrm{Stab}}
\newcommand{\NV}{\mathrm{NV}}
\newcommand{\I}{\mathop {\rm I}\nolimits}
\newtheorem{thm}{Theorem}[section] % 1st argument is your name for it
\newtheorem{lem}[thm]{Lemma}     % 2nd argument is what is printed
\newtheorem{cor}[thm]{Corollary}
\newtheorem{prop}[thm]{Proposition}
\newtheorem{claim}{Claim}
\theoremstyle{definition}
\newtheorem{defin}[thm]{Definition}
\newtheorem{rem}[thm]{Remark}
\newtheorem{ex}[thm]{Example}
\newtheorem{prob}[thm]{Problem}
\begin{document}

\title[Galois covers of graphs and embedded topology]{Galois covers of graphs and \\ embedded topology of plane curves}
\author{Taketo Shirane}
\address{Department of Mathematical Sciences, Faculty of Science and Technology, Tokushima University, Tokushima, 770-8502, Japan}
\email{shirane@tokushima-u.ac.jp}
\keywords{embedded topology, Zariski piar, Galois cover of graphs, splitting graph, Artal arrangement}
\subjclass[2010]{14E20, 14F45, 14H50, 57M15, 57Q45}
%\date{
%%\today
%}
\maketitle

\begin{abstract}
The splitting number is effective to distinguish the embedded topology of plane curves, and it is not determined by the fundamental group of the complement of the plane curve. 
In this paper, we give a generalization of the splitting number, called the \textit{splitting graph}. 
By using the splitting graph, we classify the embedded topology of plane curves consisting of one smooth curve and non-concurrent three lines, called \textit{Artal arrangements}. 
\end{abstract}

\section{Introduction}

In this paper, we study the embedded topology of plane curves in the complex projective plane $\PP^2:=\CC\PP^2$, 
such as in knot and link theory. 
Here the \textit{embedded topology} of a plane curve $\mcC\subset\PP^2$ is the homeomorphic class of the pair $(\PP^2,\mcC)$ of $\PP^2$ and the reduced divisor $\mcC$ on $\PP^2$. 
We introduce a new invariant, called the \textit{splitting graph}, using a Galois cover of graphs, which describe the ``splitting" of a plane curve by a Galois cover of $\PP^2$. 

The first result about the embedded topology of plane curves is given by O.~Zariski \cite{zariski}. 
He studied the existence of a certain algebraic function for a given plane curve, 
and pointed out that the existence can be reduced to finding the fundamental group of the complement of the given curve (the word \textit{complement} is often omitted for short). 
He also proved that the fundamental group of a plane sextic curve with six cusps is isomorphic to the free product $\ZZ_2\ast\ZZ_3$ if the six cusps lies on a conic, and the fundamental group is not isomorphic to $\ZZ_2\ast\ZZ_3$ otherwise (in fact, it is isomorphic to $\ZZ_6$ proved by M.~Oka \cite{okaz}), where $\ZZ_i$ is the cyclic group of order $i$. 
This result shows that the configuration of singularities of a plane curve can affect the fundamental group (hence the embedded topology) of the plane curves.

It is known that, if two plane curves have the same embedded topology, then they have the same combinatorics, i.e., they are equisingular (see \cite{act} for details). 
However Zariski's result shows that the converse is false. 
A pair of plane curves with the same combinatorics is called \textit{Zariski pair} if they have different embedded topology. 
From the 90's, E.~Artal \cite{artal}, M.~Oka \cite{oka}, H.~Tokunaga \cite{tokunaga, tokunaga-dihedral}, and others have discovered Zariski pairs by studying fundamental groups of plane curves. 
In 21st century, $\pi_1$-equivalent Zariski pairs have been discovered (cf. \cite{degtyarev, shirane}), 
and the following problem has arisen naturally. 
(Here two plane curves are said to be \textit{$\pi_1$-equivalent} if their fundamental groups are isomorphic.)

\begin{prob}
Give a method for distinguishing the embedded topology of $\pi_1$-equivalent plane curves. 
\end{prob}

Several methods succeed in distinguishing the embedded topology of certain $\pi_1$-equivalent plane curves. 
For example, there are methods using the theory of K3-surfaces \cite{degtyarev}, the braid monodromy \cite{acc}, the splitting number \cite{shirane} and the linking set \cite{benoit-meilhan}. 
The methods using the theory of K3-surfaces and the splitting numbers are based on techniques of algebraic geometry. 
On the other hand, the ones using the braid monodromy and the linking set are derived from invariants of geometrical topology. 
In \cite{benoit-shirane}, B.~Guerville and the author gave a relation between the splitting number and the linking set for certain plane curves. 

In this paper, we give a generalization of the splitting number, called \textit{splitting graph}. 
The splitting number is based on the studies of splitting curves for double covers by Artal--Tokunaga \cite{artal-tokunaga}, Tokunaga \cite{tokunaga-splitting} and S.~Bannai \cite{bannai}. 
Here a plane curve $\mcC\subset\PP^2$ is said to be \textit{splitting} for a double cover $\phi:X\to\PP^2$ if $\phi^{\ast}(\mcC)=\mcC^++\mcC^-$ for some two curves $\mcC^+,\mcC^-\subset X$ with no common components and $\phi(\mcC^\pm)=\mcC$. 
In \cite{artal-tokunaga}, Artal and Tokunaga implicitly used splitting curves for double covers to establish the difference of the fundamental groups of plane curves. 
In \cite{tokunaga-splitting}, Tokunaga defined the splitting curves with respect to double covers, and study the splitting curves as an analog of elementary number theory (see \cite[Remark~0.1]{tokunaga-splitting}). 
In \cite{bannai}, Bannai introduced the \textit{splitting type} of certain plane curves for double covers. 
The splitting type gives a method for distinguishing the embedded topology of plane curves without going through the fundamental groups. 
In \cite{shirane}, the author defined the \textit{splitting number} of irreducible curves for Galois covers, 
and proved that the splitting number is invariant under certain homeomorphisms from $\PP^2$ to itself based on Bannai's idea. 
Moreover, by using the splitting number,
he distinguished the embedded topology of the $\pi_1$-equivalent equisingular curves defined by Shimada \cite{shimada}. 
In \cite{shirane2}, the \textit{connected number} of plane curves (possibly reducible) for Galois covers of $\PP^2$ was defined, 
which is a modification of the splitting number. 
It is known that the splitting number and the connected number are not determined by the fundamental group (see \cite{shirane, shirane2}). 
These results show that studying the ``splitting" of plane curves for Galois covers is effective to distinguish the embedded topology. 
In this paper, we define the splitting graph of plane curves for Galois covers to describe more precisely how a plane curve splits by a Galois cover, which is not determined by the fundamental group (see Remark~\ref{rem: fundamental group} (ii)). 

As an application of the splitting graph, we classify the embedded topology of Artal arrangements, where an \textit{Artal arrangement} is a plane curve consisting of one smooth curve and non-concurrent three lines. 
The name ``Artal" comes from E.~Artal who gave the first Zariski pair of Artal arrangements in \cite{artal}. 
Note that Artal arrangements have been defined in \cite{ban-ben-shi-tok} and \cite{shirane2}, and 
our definition is a generalization of the ones in \cite{ban-ben-shi-tok} and \cite{shirane2}. 
An Artal arrangement in \cite{ban-ben-shi-tok} or \cite{shirane2} is a plane curve consisting of one smooth curve and its non-concurrent three total inflectional tangents, i.e., each of the three tangents intersects at just one point with the smooth curve. 
In this paper, we define Artal arrangements of type $\Parti$ for a triple $\Parti$ of partitions of an integer $d\geq3$. 
In our definition, the assumption ``total inflectional tangents" is removed 
(see Section~\ref{sec: artal arrangement} for details). 
Moreover, we define the splitting graph associated to an Artal arrangement. 
By Theorem~\ref{thm: artal arrangement}, we obtain the following theorem. 

\begin{thm}\label{thm: introduction} 
Let $\mcA_1$ and $\mcA_2$ be two Artal arrangements of type $\Parti$ for a triple $\Parti$ of partitions of an integer $d\geq3$. 
Then $\mcA_1$ and $\mcA_2$ have the same embedded topology if and only if the splitting graph associated to $\mcA_1$ is equivalent to the splitting graph associated to $\mcA_2$. 
\end{thm}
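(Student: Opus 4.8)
The plan is to obtain the statement as the clean reformulation of the more precise Theorem~\ref{thm: artal arrangement}, which classifies the Artal arrangements of a fixed type $\Parti$ in terms of their associated splitting graphs. The real content lies in establishing two facts, which that theorem must package together: the splitting graph is a \emph{topological invariant} of the pair $(\PP^2,\mcA)$, and, within the class of Artal arrangements of type $\Parti$, it is a \emph{complete} one. I would therefore organize the proof around these two implications.

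For the ``only if'' direction I would prove invariance of the splitting graph under homeomorphisms of pairs. A homeomorphism $\phi\colon(\PP^2,\mcA_1)\to(\PP^2,\mcA_2)$ must carry the three linear components to the three linear components and the smooth component to the smooth component, since these are distinguished by the local and global topology of $\mcA$ at and away from its singular points; in particular $\phi$ preserves the type $\Parti$. The Galois cover used to define the splitting graph is determined by the topology of the complement $\PP^2\setminus\mcA$, as it corresponds to a distinguished subgroup of $\pi_1(\PP^2\setminus\mcA)$, equivalently of $H_1$. Hence $\phi$ induces a compatible map of the associated graph covers, carrying the decomposition of the pullback of the smooth curve to the corresponding decomposition on the other side and respecting the incidence data recorded by the graph. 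This yields an equivalence of splitting graphs and generalizes the invariance of the splitting number from \cite{shirane}; I expect it to go through with the same circle of ideas.

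For the ``if'' direction I would construct, from an equivalence of splitting graphs, a homeomorphism of pairs. Since the splitting graph refines the combinatorial type, equivalent splitting graphs force $\mcA_1$ and $\mcA_2$ to be equisingular of the same type $\Parti$; the task is to upgrade this to a homeomorphism. Here I would exploit the very restricted geometry of an Artal arrangement: three non-concurrent lines form a fixed triangle up to projective equivalence, and the remaining freedom lies in the position of the smooth degree-$d$ curve $C$ relative to that triangle. I would argue that the splitting graph encodes exactly the discrete data pinning down the embedded topology of this configuration---essentially the distribution of the points $L_i\cap C$ among the fibers of the defining Galois cover---and that two configurations with the same such data can either be joined by an equisingular family along which the extra invariant stays constant, or be matched directly by a projective or birational construction, producing the desired homeomorphism.

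The main obstacle I anticipate is precisely the completeness claimed in the ``if'' direction: showing that the splitting graph carries \emph{all} the topological information, with no finer invariant hiding, so that no two arrangements share a type and an equivalent splitting graph yet differ in embedded topology. This is where the simplicity of the class is decisive, since the configuration space of one smooth curve against a fixed triangle is small enough that the discrete invariant ought to exhaust the topological types. Carrying this out rigorously---whether by enumerating the possible splitting graphs and exhibiting matching homeomorphisms, or by a connectedness argument for the equisingular strata cut out by a fixed splitting graph---is the technical heart of Theorem~\ref{thm: artal arrangement}, and hence of the present theorem.
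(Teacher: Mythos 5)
Your skeleton matches the paper's: the ``same topology $\Rightarrow$ equivalent graphs'' direction is exactly the paper's topological-invariance result (Theorem~\ref{thm: splitting graph}, combined with Lemma~\ref{lem: meridian} to pin the automorphism $\tau$ of $\ZZ_d$ to $[1]\mapsto[\pm1]$), and the converse is indeed proved by a connectedness argument for the strata of $\mcF_\Parti$ cut out by the discrete invariant, which is one of the two options you list. Two points, however, separate your sketch from a proof. First, the invariant is not ``the distribution of the points $L_i\cap C$ among the fibers of the Galois cover''; it is the net voltage class of the triangle cycle $\gamma_\mcA^\pm$ on the incidence graph, which the paper makes computable by putting $\mcA$ in the normal form $F_\Parti(\beta,\{c_{i,j}\},g_0)=0$ (Lemma~\ref{lem: defining eq.}) and applying Theorem~\ref{thm: gap class}, yielding $\NV_{\phi_\mcA}(\gamma_\mcA^\pm)=[\pm\beta]+s\ZZ_d$ (Lemma~\ref{lem: alpha}); connectedness of each normal-form stratum is Lemma~\ref{lem: connected component}.

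Second, and this is the genuine gap: your two proposed matching mechanisms --- an equisingular family, or ``a projective or birational construction'' --- do not suffice. The equivalence of splitting graphs (Definition~\ref{def: equivalence of splitting graph}) allows $\tau([1])=[-1]$, so arrangements with invariants $[\alpha]+s\ZZ_d$ and $[-\alpha]+s\ZZ_d$ have equivalent splitting graphs and must be shown homeomorphic. When the three partitions $\parti_1,\parti_2,\parti_3$ are pairwise distinct and $0<\alpha<s/2$, the stratum $\mcF_\Parti^\alpha$ has \emph{two} connected components $\mcF_\Parti^{\alpha+}$ and $\mcF_\Parti^{\alpha-}$ (Theorem~\ref{thm: artal arrangement}~(i)): any homeomorphism fixing the line labels and acting holomorphically induces $\tau=\id$ and therefore preserves $\NV_{\phi_\mcA}(\gamma_\mcA^+)=[\alpha]+s\ZZ_d$, so neither an equisingular deformation nor a projective (or birational) identification can carry $\mcF_\Parti^{\alpha+}$ into $\mcF_\Parti^{\alpha-}$. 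The paper bridges these two components with the anti-holomorphic homeomorphism $\bar h$ induced by complex conjugation, which reverses the sign of the invariant (Theorem~\ref{thm: artal arrangement}~(iii)). Without this idea the ``if'' direction of the statement fails exactly in this case, so it must be added to your argument.
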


This paper is organized as follows. 
In Section~\ref{sec: graph}, we investigate branched Galois covers of graphs in order to define and distinguish the splitting graph. 
In particular, we define the \textit{net voltage class} of a closed walk on a graph for a Galois cover over the graph (Definition~\ref{def: gap class}), and give a method of distinguishing two Galois covers of a graph (Corollary~\ref{cor: gap class}). 
In Section~\ref{sec: splitting graph}, we define the \textit{splitting graph} of a plane curve for a Galois cover over $\PP^2$ as a Galois cover of certain graphs (Definition~\ref{def: splitting graph}). 
Moreover, we introduce a method of computing net voltage classes for a cyclic cover (Theorem~\ref{thm: gap class}). 
In the final section, we define \textit{Artal arrangements} of type $(\parti_1,\parti_2,\parti_3)$ for three partitions $\parti_i$ of an integer $d\geq 3$ (Definition~\ref{def: artal arrangement}), and classify the embedded topology of Artal arrangements by using the splitting graphs (Theorem~\ref{thm: artal arrangement}). 

\section{Galois covers of graphs}\label{sec: graph}

In this section, we consider `branched covers' of graphs and their equivalence. 
Unramified covers of graphs has been investigated by Gross--Tucker \cite{gross-tucker}, Stark--Terras \cite{terras-stark1,terras-stark2, terras-stark3} and so on. 
In this section, we investigate how to distinguish branched Galois covers over a graph.

In this paper, we assume that any \textit{graph} is finite, 
i.e., the sets of vertices and edges are finite. 
Note that we allow a graph to be disconnected. 
The sets of vertices and edges of a graph $\mcG$ are denoted by $V_\mcG$ and $E_\mcG$, respectively.
We consider each edge $e\in E_\mcG$ to have two directions, arbitrarily distinguished as the \textit{plus direction} $e^+$ and the \textit{minus derection} $e^-$, 
i.e., $e^\pm$ are two onto functions $\{0,1\}\to V_\mcG(e)$ such that $e^-(0)=e^+(1)$ and $e^-(1)=e^+(0)$, 
where $V_\mcG(e)$ is the endpoint set of the edge $e$. 
We call $e^\pm(0)$ and $e^\pm(1)$ the \textit{initial vertex} and the \textit{terminal vertex} of $e^\pm$, respectively. 
For two graphs $\mcG_i$ ($i=1,2$), a \textit{map} $\theta$ from $\mcG_1$ to $\mcG_2$ is a pair $\theta=(\theta_V,\theta_E)$ of maps $\theta_V:V_{\mcG_1}\to V_{\mcG_2}$ and $\theta_E:E_{\mcG_1}\to E_{\mcG_2}$ satisfying $V_{\mcG_2}(\theta_E(e))=\theta_V(V_{\mcG_1}(e))\subset V_{\mcG_2}$ for any $e\in E_{\mcG_1}$. 
A map $\theta:\mcG_1\to\mcG_2$ of graphs is called an \textit{isomorphism} if $\theta_V$ and $\theta_E$ are bijective.

\begin{defin}\label{def: Galois cover}
Let $\mcG$ be a graph, and let $G$ be a finite group. 
A \textit{Galois cover} of $\mcG$ with the Galois group $G$ (called \textit{$G$-cover} for short) is a map $\phi:\widetilde{\mcG}\to \mcG$ of graphs satisfying the following conditions; 
\begin{enumerate}
\item $G$ acts on $\widetilde{\mcG}$; 
\item the quotient map $q:\widetilde{\mcG}\to\widetilde{\mcG}/G$ corresponds to $\phi:\widetilde{\mcG}\to\mcG$, i.e., there exists an isomorphism $i:\widetilde{\mcG}/G\to\mcG$ such that $\phi=i\circ q$;
\item either $E_{\mcG}=\emptyset$ or $G$ acts on $E_{\widetilde{\mcG}}$ freely, i.e., for any $\tilde{e}\in E_{\widetilde{\mcG}}$, $g{\cdot}\tilde{e}=\tilde{e}$ if and only if $g=\id_G$. 
\end{enumerate}
A $G$-cover $\phi:\widetilde{\mcG}\to\mcG$ is said to be \textit{unbranched} or \textit{unramified} if the cardinality of $\phi_V^{-1}(v)$ is equal to the order $|G|$ of $G$ for each $v\in V_\mcG$, and \textit{branched} or \textit{ramified} otherwise. 
\end{defin}

\begin{rem}\label{rem: galois cover}
Let $\phi:\widetilde{\mcG}\to\mcG$ be a $G$-cover. 
\begin{enumerate}
\item
If $\phi$ is unramified, then it is called a \textit{regular covering} in \cite{gross-tucker}. 
\item
By Definition~\ref{def: Galois cover} (i), we have $V_{\widetilde{\mcG}}(g{\cdot}\tilde{e})=g{\cdot}V_{\widetilde{\mcG}}(\tilde{e})\subset V_{\widetilde{\mcG}}$. 
\item 
For each $v\in V_{\mcG}$ and $e\in E_\mcG$, $G$ acts transitively on the fibers $\phi_V^{-1}(v)$ and $\phi_E^{-1}(e)$, respectively, by Definition~\ref{def: Galois cover} (ii). 
\item
We assume that $\phi$ preserves the directions of edges, 
i.e., $\phi_V(\tilde{e}^+(0))=e^+(0)$ and $\phi_V(\tilde{e}^+(1))=e^+(1)$ for each $e\in E_{\mcG}$ and $\tilde{e}\in\phi_{E}^{-1}(e)$. 
Hence the action of $G$ on $\widetilde{\mcG}$ also preserves the direction of edges. 
\item %By condition (iii) in Definition~\ref{def: Galois cover}, 
The action of $G$ on $\widetilde{\mcG}$ is faithful if $E_{\mcG}\ne\emptyset$. 
\end{enumerate}
\end{rem}

We define the equivalence of Galois covers of graphs with a fixed Galois group $G$. 

\begin{defin}\label{def: equivalence of Galois covers}
Let $\mcG_i$ ($i=1,2$) be two graphs, and let $G$ be a finite group with an automorphism $\tau:G\to G$. 
Assume that there is an isomorphism $\theta:\mcG_1\to\mcG_2$. 
Let $\phi_i=(\phi_{V,i},\phi_{E,i}):\widetilde{\mcG}_i\to\mcG_i$ ($i=1,2$) be two $G$-covers. 
We say that $\widetilde{\mcG}_1$ and $\widetilde{\mcG}_2$ are \textit{$(\theta,\tau)$-equivalent}, written by $\widetilde{\mcG}_1\sim_{(\theta,\tau)}\widetilde{\mcG}_2$, if there is an isomorphism $\tilde{\theta}=(\tilde{\theta}_V,\tilde{\theta}_E):\widetilde{\mcG}_1\to\widetilde{\mcG}_2$ satisfying the following conditions; 
\begin{enumerate}
\item $\phi_2\circ\tilde{\theta}=\theta\circ\phi_1$, i.e., $\phi_{V,2}\circ\tilde{\theta}_V=\theta_V\circ\phi_{V,1}$ and $\phi_{E,2}\circ\tilde{\theta}_E=\theta_E\circ\phi_{E,1}$; 
\item for any $\tilde{v}\in V_{\widetilde{\mcG}_1}$ and $g\in G$, $\tilde{\theta}_V(g{\cdot}\tilde{v})=\tau(g){\cdot}\tilde{\theta}_V(\tilde{v})$; and
\item for any $\tilde{e}\in E_{\mcG_1}$ and $g\in G$, $\tilde{\theta}_E(g{\cdot}\tilde{e})=\tau(g){\cdot}\tilde{\theta}_{E}(\tilde{e})$. 
\end{enumerate}
\end{defin}

\begin{rem}
Let $\phi_i:\widetilde{\mcG}_i\to\mcG_i$, $\tau:G\to G$ and $\theta:\mcG_1\to\mcG_2$ be as in Definition~\ref{def: equivalence of Galois covers}. 
For $\tilde{v}_i\in V_{\widetilde{\mcG}_i}$, we put $\Stab_G(\tilde{v}_i):=\{g\in G\mid g{\cdot}\tilde{v}_i=\tilde{v}_i\}$.  
Assume that $E_{\mcG_i}=\emptyset$ for each $i=1,2$. 
Then $\widetilde{\mcG_1}\sim_{(\theta,\tau)}\widetilde{\mcG_2}$ if and only if, 
for each $v_1\in V_{\mcG_1}$ and $v_2:=\theta_V(v_1)$, 
$\tau(\Stab_G(\tilde{v}_1))$ is conjugate to $\Stab_G(\tilde{v}_2)$ for any $\tilde{v}_1\in\phi_{V,1}^{-1}(v_1)$ and $\tilde{v}_2\in\phi_{V,2}^{-1}(v_2)$. 
Indeed, since $E_{\widetilde{\mcG}_i}=\emptyset$, $\widetilde{\mcG}_1\sim_{(\theta,\tau)}\widetilde{\mcG}_2$ 
if and only if $\phi_{V,1}^{-1}(v_1)\sim_{(\theta,\tau)}\phi_{V,2}^{-1}(v_2)$ for each $v_1\in V_{\mcG_1}$, 
which is equivalent to the latter condition by regarding $\phi_{V,i}^{-1}(v_i)$ as $G$-sets. 
\end{rem}

In order to give a criterion for equivalence of Galois covers of graphs with edges, we introduce an invariant of Galois covers by using closed walks. 
A \textit{walk} $\gamma$ on a graph $\mcG$ is an alternating sequence of vertices and directed edges
\[ \gamma=(v_0,e_1^{\sigma_1},v_1,e_2^{\sigma_2},\dots,v_{n-1},e_n^{\sigma_n},v_n) \ \ \ (\sigma_i=+ \mbox{ or }-) \]
such that $e_i^{\sigma_i}(0)=v_{i-1}$ and $e_i^{\sigma_i}(1)=v_i$, i.e., the initial and terminal vertices of $e_i^{\sigma_i}$ are $v_{i-1}$ and $v_i$, respectively, for each $i=1,\dots,n$. 
For a walk $\gamma=(v_0,e_1^{\sigma_1},\dots,v_{n})$, we call $v_0$ and $v_{n}$ the \textit{initial vertex} and the \textit{terminal vertex} of $\gamma$, respectively, 
and we call the number $n$ of edges in $\gamma$ the \textit{length} of $\gamma$. 
A walk $\gamma$ is said to be \textit{closed} if the initial and terminal vertices coincide. 
For a Galois cover $\phi:\widetilde{\mcG}\to\mcG$ and a walk $\gamma=(v_0,e_1^{\sigma_1}\dots,v_{n})$ on $\mcG$, 
a \textit{lift} of $\gamma$ under $\phi$ is a walk $\tilde{\gamma}=(\tilde{v}_0,\tilde{e}_1^{\sigma_1},\dots,\tilde{v}_{n})$ on $\widetilde{\mcG}$ satisfying $\phi_V(\tilde{v}_i)=v_i$ and $\phi_E(\tilde{e}_i)=e_i$ for any $i=0,\dots,n$. 

Let $\phi:\widetilde{\mcG}\to\mcG$ be a $G$-cover of a graph $\mcG$, 
and let $\gamma=(v_0,e_1^{\sigma_1},\dots,v_{n})$ be a closed walk on $\mcG$. 
We fix a vertex $\tilde{v}_0$ such that $\phi_V(\tilde{v}_0)=v_0$. 
We define a subset $\widetilde{V}_{\phi,i}^{\gamma}(\tilde{v}_0)$ of $\phi_V^{-1}(v_i)$ for each $i=0,\dots,n$ as follows: 
\begin{align*} 
\widetilde{V}_{\phi,0}^{\gamma}(\tilde{v}_0)&:=\{ \tilde{v}_0 \}, \\
\widetilde{V}_{\phi,i}^{\gamma}(\tilde{v}_0)&:=\left\{ \tilde{v}_i\in\phi_V^{-1}(v_i) \,\middle|\, 
\begin{array}{l}
\tilde{e}_i^{\sigma_i}(0)\in\widetilde{V}_{\phi,i-1}^\gamma(\tilde{v}_0) \mbox{ and } \tilde{e}_i^{\sigma_i}(1)=\tilde{v}_i \\\mbox{for some $\tilde{e}_{i}\in\phi_E^{-1}(e_i)$} 
\end{array}
\right\}.
\end{align*}
In other words, $\widetilde{V}_{\phi,i}^{\gamma}(\tilde{v}_0)$ is the set of vertices $\tilde{v}_i\in \phi_V^{-1}(v_i)$ such that there is a lift of the walk $(v_0,e_i^{\sigma_1},\dots,v_i)$ under $\phi$ whose initial and terminal vertices are $\tilde{v}_0$ and $\tilde{v}_i$, respectively. 
Let $\NV_{\phi}(\gamma,\tilde{v}_0)$ be the following subset of $G$: 
\[ \NV_{\phi}(\gamma,\tilde{v}_0):=\{ g\in G \mid g{\cdot}\tilde{v}_0\in\widetilde{V}_{\phi,n}^{\gamma}(\tilde{v}_0) \}. \]
Note that $\widetilde{V}_{\phi,n}^{\gamma}(\tilde{v}_0)=\{g{\cdot}\tilde{v}_0\mid g\in \NV_{\phi}(\gamma,\tilde{v}_0)\}$ since the action of $G$ on $\phi_V^{-1}(v_0)$ is transitive by condition (ii) in Definition~\ref{def: Galois cover}. 

\begin{lem}\label{lem: conjugate}
Let $\phi:\widetilde{\mcG}\to\mcG$, $G$ and $\gamma$ be as above. 
For any $\tilde{v}_0\in\phi_V^{-1}(v_0)$ and $g\in G$, 
\[ \NV_{\phi}(\gamma, g{\cdot}\tilde{v}_0)=g \NV_{\phi}(\gamma, \tilde{v}_0) g^{-1}. \]
\end{lem}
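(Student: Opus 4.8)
The plan is to reduce the statement to a single equivariance property of the reachable vertex sets, namely that for every $g\in G$ and every $i=0,\dots,n$ one has
$\widetilde{V}_{\phi,i}^{\gamma}(g{\cdot}\tilde{v}_0)=g{\cdot}\widetilde{V}_{\phi,i}^{\gamma}(\tilde{v}_0)$.
Once this is available for $i=n$, the conjugation formula for $\NV_\phi$ follows from a short formal computation. Since $\gamma$ is closed we have $v_n=v_0$, so both reachable sets sit inside $\phi_V^{-1}(v_0)$; by definition $h\in\NV_\phi(\gamma,g{\cdot}\tilde{v}_0)$ means $h{\cdot}(g{\cdot}\tilde{v}_0)\in\widetilde{V}_{\phi,n}^{\gamma}(g{\cdot}\tilde{v}_0)=g{\cdot}\widetilde{V}_{\phi,n}^{\gamma}(\tilde{v}_0)$, which (using that $G$ acts on the fibre by a genuine left action) is equivalent to $(g^{-1}hg){\cdot}\tilde{v}_0\in\widetilde{V}_{\phi,n}^{\gamma}(\tilde{v}_0)$, i.e.\ to $g^{-1}hg\in\NV_\phi(\gamma,\tilde{v}_0)$, i.e.\ to $h\in g\,\NV_\phi(\gamma,\tilde{v}_0)\,g^{-1}$.

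To prove the equivariance I would argue by induction on $i$. For $i=0$ both sides equal $\{g{\cdot}\tilde{v}_0\}$ by the defining equation $\widetilde{V}_{\phi,0}^{\gamma}(\tilde{v}_0)=\{\tilde{v}_0\}$. For the inductive step, the essential input is that $G$ acts on $\widetilde{\mcG}$ by graph maps that preserve the directions of edges (Remark~\ref{rem: galois cover}~(iv)) and that commute with $\phi$; concretely $(g{\cdot}\tilde{e})^{\sigma}(k)=g{\cdot}(\tilde{e}^{\sigma}(k))$ for $k\in\{0,1\}$, and $g$ permutes the fibre $\phi_E^{-1}(e_i)$ because $\phi_E\circ(g{\cdot})=\phi_E$. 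Given $\tilde{v}_i\in\widetilde{V}_{\phi,i}^{\gamma}(g{\cdot}\tilde{v}_0)$, choose a witnessing edge $\tilde{e}_i\in\phi_E^{-1}(e_i)$ with $\tilde{e}_i^{\sigma_i}(0)\in\widetilde{V}_{\phi,i-1}^{\gamma}(g{\cdot}\tilde{v}_0)=g{\cdot}\widetilde{V}_{\phi,i-1}^{\gamma}(\tilde{v}_0)$ and $\tilde{e}_i^{\sigma_i}(1)=\tilde{v}_i$; then $g^{-1}{\cdot}\tilde{e}_i$ again lies in $\phi_E^{-1}(e_i)$ and witnesses $g^{-1}{\cdot}\tilde{v}_i\in\widetilde{V}_{\phi,i}^{\gamma}(\tilde{v}_0)$, whence $\tilde{v}_i\in g{\cdot}\widetilde{V}_{\phi,i}^{\gamma}(\tilde{v}_0)$. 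Running the same argument with $g^{-1}$ in place of $g$ gives the reverse inclusion, closing the induction.

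Both the final computation and the induction are routine; the only place that demands care is the inductive step, where one must invoke precisely that the $G$-action is direction-preserving and $\phi$-equivariant, so that translating a lift of the initial segment $(v_0,e_1^{\sigma_1},\dots,v_i)$ by $g$ again produces a lift of the \emph{same} segment. I expect this bookkeeping — matching the initial and terminal vertices of the translated edges with the reachable sets at level $i-1$ — to be the main (though modest) obstacle, while everything else reduces to the transitivity of the $G$-action on fibres already recorded in Definition~\ref{def: Galois cover} and Remark~\ref{rem: galois cover}.
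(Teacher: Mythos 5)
Your proof is correct and follows essentially the same route as the paper: both rest on the observation that the $G$-action commutes with $\phi$ and preserves directions, so translating by $g$ carries lifts of $\gamma$ based at $\tilde{v}_0$ to lifts based at $g{\cdot}\tilde{v}_0$, turning a terminal vertex $g_0{\cdot}\tilde{v}_0$ into $gg_0g^{-1}{\cdot}(g{\cdot}\tilde{v}_0)$. The only difference is presentational: the paper translates a whole lift in one step (citing Remark~\ref{rem: galois cover}), whereas you unfold the same equivariance as an induction on the reachable sets $\widetilde{V}_{\phi,i}^{\gamma}$.
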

\begin{proof}
For any $g_0\in G$, $g_0$ is an element of $\NV_{\phi}(\gamma,\tilde{v}_0)$ if and only if 
there exist a lift $\tilde{\gamma}=(\tilde{v}_0,\tilde{e}_1^{\sigma_1},\dots,\tilde{v}_{n})$ of $\gamma$ such that $\tilde{v}_{n}=g_0{\cdot}\tilde{v}_0$. 
By Remark~\ref{rem: galois cover} (iii), the latter condition is equivalent to the existence of a lift $\tilde{\gamma}'=(\tilde{v}_0',(\tilde{e}'_1)^{\sigma_1},\dots,\tilde{v}_{n}')$ of $\gamma$ such that $\tilde{v}_0'=g{\cdot}\tilde{v}_0$ and $\tilde{v}_{n}'=g{\cdot}\tilde{v}_n=gg_0g^{-1}{\cdot}\tilde{v}_0'$. 
Thus, $g_0\in \NV_{\phi}(\gamma,\tilde{v}_0)$ is equivalent to $gg_0g^{-1}\in \NV_{\phi}(\gamma,g{\cdot}\tilde{v}_0)$. 
Therefore, we obtain $\NV_{\phi}(\gamma,g{\cdot}\tilde{v}_0)=g\NV_{\phi}(\gamma,\tilde{v}_0)g^{-1}$. 
\end{proof}

For a subset $S$ of a group $G$, we call the family of sets 
\[ \{gSg^{-1}\subset G \mid g\in G\} \subset 2^G \]
the \textit{conjugacy class} of $S$ in $G$. 
By Lemma~\ref{lem: conjugate} and Remark~\ref{rem: galois cover} (ii), 
the conjugacy class of the set $\NV_{\phi}(\gamma, \tilde{v}_0)$ in $G$ does not depend on the choice of $\tilde{v}_0\in\phi_V^{-1}(v_0)$. 

\begin{defin}\label{def: gap class}
Let $\phi:\widetilde{\mcG}\to\mcG$, $G$ and $\gamma$ be as above. 
Let $\NV_{\phi}(\gamma)$ denote the conjugacy class of $\NV_{\phi}(\gamma,\tilde{v}_0)$, i.e.,
\[ \NV_{\phi}(\gamma):=\{g\NV_{\phi}(\gamma,\tilde{v}_0)g^{-1} \mid g\in G\}, \]
and we call $\NV_\phi(\gamma)$ the \textit{net voltage class} of $\gamma$ for $\phi$. 
\end{defin}

\begin{rem}
The name ``net voltage class" is derived from the net voltage on a walk in the base space of a voltage graph. 
Voltage graphs correspond to unramified covers of a graph, and the net voltage represents the terminal vertex of a lift of the walk (see \cite{gross-tucker}). 
In the case of unramified covers, a lift of a walk is uniquely determined by its initial vertex. 
However a lift of a walk is not uniquely determined by its initial vertex in the case of branched covers, 
hence a net voltage is not uniquely determined. 
\end{rem}

For a walk $\gamma=(v_0,e_{1}^{\sigma_1},\dots,v_{n})$ on a graph $\mcG_1$ and a map $\theta:\mcG_1\to \mcG_2$,  let $\theta(\gamma)$ denote the following walk on $\mcG_2$: 
\[  \theta(\gamma):=(\theta_V(v_0),\theta_E(e_1)^{\sigma_1'},\dots,\theta_V(v_{n})), \] 
where $\sigma_i'$ is the sign $\pm$ such that $\theta_E(e_i)^{\sigma_i'}(0)=\theta_V(v_{i-1})$ and $\theta_E(e_i)^{\sigma_i'}(1)=\theta_V(v_i)$. 
The net voltage class is invariant under equivalence of Galois covers of graphs by the following proposition.

\begin{prop}\label{prop: gap class}
Let $G$ be a finite group, let $\phi_i:\widetilde{\mcG}_i\to\mcG_i$ be a $G$-cover of graphs for each $i=1,2$, and let $\gamma_i$ be a closed walk on $\mcG_i$. 
Assume that $\widetilde{\mcG}_1\sim_{(\theta,\tau)}\widetilde{\mcG}_2$, where 
$\theta:\mcG_1\to\mcG_2$ and $\tau:G\to G$ are an isomorphism of graphs and an automorphism of $G$, respectively. 
If $\theta(\gamma_1)=\gamma_2$, then $\tau(\NV_{\phi_1}(\gamma_1))=\NV_{\phi_2}(\gamma_2)$. 
\end{prop}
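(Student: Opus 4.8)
The plan is to transport lifts of $\gamma_1$ to lifts of $\gamma_2$ through the equivalence isomorphism $\tilde{\theta}$, and then to track how the two equivariance conditions in Definition~\ref{def: equivalence of Galois covers} convert $\NV_{\phi_1}$ into $\NV_{\phi_2}$. Write $v_0$ for the initial vertex of $\gamma_1$, fix $\tilde{v}_0\in\phi_{V,1}^{-1}(v_0)$, and set $\tilde{w}_0:=\tilde{\theta}_V(\tilde{v}_0)$; by condition (i) of Definition~\ref{def: equivalence of Galois covers} we have $\phi_{V,2}(\tilde{w}_0)=\theta_V(v_0)$, which is the initial vertex of $\gamma_2=\theta(\gamma_1)$, so $\tilde{w}_0$ is a legitimate base point for $\NV_{\phi_2}(\gamma_2,\cdot)$.

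First I would establish the key lemma: $\tilde{\theta}$ carries every lift of $\gamma_1$ under $\phi_1$ to a lift of $\gamma_2$ under $\phi_2$, and this correspondence is a bijection with inverse induced by $\tilde{\theta}^{-1}$. Given a lift $\tilde{\gamma}_1=(\tilde{v}_0,\tilde{e}_1^{\sigma_1},\dots,\tilde{v}_n)$, applying $\tilde{\theta}$ vertex-by-vertex and edge-by-edge yields a walk whose $i$-th vertex and edge project, by condition (i), to $\theta_V(v_i)$ and $\theta_E(e_i)$; since $\tilde{\theta}$ is a graph isomorphism it preserves the endpoint relations $V_{\widetilde{\mcG}_2}(\tilde{\theta}_E(\tilde{e}_i))=\tilde{\theta}_V(V_{\widetilde{\mcG}_1}(\tilde{e}_i))$, and since all the maps in sight respect edge directions (Remark~\ref{rem: galois cover} (iv)), the resulting walk carries exactly the directions $\sigma_i'$ prescribed in the definition of $\theta(\gamma_1)$. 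Hence $\tilde{\theta}(\tilde{\gamma}_1)$ is a lift of $\gamma_2$; running the same argument with $\tilde{\theta}^{-1}$ produces the bijection. The only point demanding care here is the direction bookkeeping at loops, which is handled by the direction-preserving convention on all maps.

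Consequently $\tilde{\theta}_V$ restricts to a bijection $\widetilde{V}_{\phi_1,i}^{\gamma_1}(\tilde{v}_0)\to\widetilde{V}_{\phi_2,i}^{\gamma_2}(\tilde{w}_0)$ for every $i$, because these sets are precisely the terminal-vertex sets of lifts of the corresponding initial segments; in particular $\tilde{\theta}_V(\widetilde{V}_{\phi_1,n}^{\gamma_1}(\tilde{v}_0))=\widetilde{V}_{\phi_2,n}^{\gamma_2}(\tilde{w}_0)$. Now I would compute $\NV_{\phi_2}(\gamma_2,\tilde{w}_0)$. For $g\in G$, condition (ii) gives $g{\cdot}\tilde{w}_0=\tilde{\theta}_V(\tau^{-1}(g){\cdot}\tilde{v}_0)$, so by injectivity of $\tilde{\theta}_V$ the membership $g{\cdot}\tilde{w}_0\in\widetilde{V}_{\phi_2,n}^{\gamma_2}(\tilde{w}_0)$ is equivalent to $\tau^{-1}(g){\cdot}\tilde{v}_0\in\widetilde{V}_{\phi_1,n}^{\gamma_1}(\tilde{v}_0)$, i.e. to $\tau^{-1}(g)\in\NV_{\phi_1}(\gamma_1,\tilde{v}_0)$. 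This yields the pointwise identity $\NV_{\phi_2}(\gamma_2,\tilde{w}_0)=\tau(\NV_{\phi_1}(\gamma_1,\tilde{v}_0))$.

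Finally I would pass to conjugacy classes. Since $\tau$ is an automorphism, $\tau(gSg^{-1})=\tau(g)\tau(S)\tau(g)^{-1}$ and $\tau$ permutes $G$, so $\tau$ sends the conjugacy class of any subset $S\subset G$ to the conjugacy class of $\tau(S)$; applying this to $S=\NV_{\phi_1}(\gamma_1,\tilde{v}_0)$ together with the identity just obtained gives $\tau(\NV_{\phi_1}(\gamma_1))=\NV_{\phi_2}(\gamma_2)$, as desired. The substantive content is the lift-bijection of the second step; everything after it is bookkeeping, the only genuine pitfall being to keep $\tau$ and $\tau^{-1}$ straight in the equivariance computation.
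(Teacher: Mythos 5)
Your proof is correct and follows essentially the same route as the paper's: transport lifts of $\gamma_1$ through $\tilde{\theta}$ to obtain $\tilde{\theta}_V(\widetilde{V}_{\phi_1,n}^{\gamma_1}(\tilde{v}_0))=\widetilde{V}_{\phi_2,n}^{\gamma_2}(\tilde{w}_0)$, apply the equivariance condition (ii) of Definition~\ref{def: equivalence of Galois covers} to get the pointwise identity $\tau(\NV_{\phi_1}(\gamma_1,\tilde{v}_0))=\NV_{\phi_2}(\gamma_2,\tilde{w}_0)$, and pass to conjugacy classes using that $\tau$ is an automorphism. The only difference is that you make explicit the $\tau^{-1}$ bookkeeping and the conjugacy-class step, which the paper leaves implicit.
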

\begin{proof}
Let $\tilde{\theta}:\widetilde{\mcG}_1\to\widetilde{\mcG}_2$ be an isomorphism satisfying conditions (i), (ii) and (iii) in Definition~\ref{def: equivalence of Galois covers}. 
By condition (i) in Definition~\ref{def: equivalence of Galois covers}, $\tilde{\theta}_V$ gives a bijection from $\phi_{V,1}^{-1}(v)$ to $\phi_{V,2}^{-1}(\theta_V(v))$ for each $v\in V_{\mcG_1}$. 
Let $v_0$ and $w_0$ be the initial vertices of $\gamma_1$ and $\gamma_2$, respectively, 
and let $n$ be the length of $\gamma_1$ and $\gamma_2$. 
Let $\tilde{v}_0$ be a vertex in $\phi_1^{-1}(v_0)$, and put $\tilde{w}_0:=\tilde{\theta}_V(\tilde{v}_0)$. 
Since $\tilde{\theta}$ is an isomorphism, 
an sequence $(\tilde{v}_0,\tilde{e}_1^{\sigma_1},\dots,\tilde{v}_{n})$ with $\tilde{v}_i\in V_{\widetilde{\mcG}_1}$ and $\tilde{e}_i\in E_{\widetilde{\mcG}_1}$ is a walk on $\widetilde{\mcG}_1$ if and only if 
$(\tilde{w}_0,(\tilde{e}_1')^{\sigma_1'},\dots,\tilde{w}_{n})$ is a walk on $\widetilde{\mcG}_2$ for certain signs $\sigma_i'$, 
where $\tilde{w}_i=\tilde{\theta}_V(\tilde{v}_i)$ and $\tilde{e}'_i:=\tilde{\theta}_E(\tilde{e}_i)$. 
Hence we have $\tilde{\theta}_V(\widetilde{V}_{\phi_1,n}^{\gamma_1}(\tilde{v}_0))=\widetilde{V}_{\phi_2,n}^{\gamma_2}(\tilde{w}_0)$. 
Moreover, we obtain $\tau(\NV_{\phi_1}(\gamma_1,\tilde{v}_0))=\NV_{\phi_2}(\gamma_2,\tilde{w}_0)$ by Definition~\ref{def: equivalence of Galois covers} (ii). 
Therefore, $\tau(\NV_{\phi_1}(\gamma_1))=\NV_{\phi_2}(\gamma_2)$ since $\tau$ is an automorphism of $G$. 
\end{proof}

A closed walk $\gamma=(v_0,e_1^{\sigma_1},\dots,v_{n})$ is said to be \textit{simple} if $v_i\ne v_j$ for any $0\leq i<j< n$. 
By the same idea of \cite{ban-ben-shi-tok}, 
we obtain the following corollary. 

\begin{cor}\label{cor: gap class}
Let $G$ be a finite group, let $\phi_i:\widetilde{\mcG}_i\to\mcG_i$ ($i=1,2$) be two $G$-covers of graphs, and let $\tau:G\to G$ be an automorphism. 
Let $\mcW_{\mcG_i}$ be the set of simple closed walks. 
If there exists no bijection $\Theta_{\mcW}:\mcW_{\mcG_1}\to\mcW_{\mcG_2}$ such that $\tau(\NV_{\phi_1}(\gamma_1))=\NV_{\phi_2}(\theta_\mcW(\gamma_1))$ for any $\gamma_1\in\mcW_{\mcG_1}$,
then there exist no isomorphisms $\theta:\mcG_1\to\mcG_2$ such that $\widetilde{\mcG}_1\sim_{(\theta,\tau)}\widetilde{\mcG}_2$. 
\end{cor}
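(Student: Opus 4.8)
The statement is phrased as a contrapositive, so the plan is to prove the equivalent implication: if there exists an isomorphism $\theta:\mcG_1\to\mcG_2$ with $\widetilde{\mcG}_1\sim_{(\theta,\tau)}\widetilde{\mcG}_2$, then a bijection $\Theta_\mcW:\mcW_{\mcG_1}\to\mcW_{\mcG_2}$ satisfying $\tau(\NV_{\phi_1}(\gamma_1))=\NV_{\phi_2}(\Theta_\mcW(\gamma_1))$ for all $\gamma_1$ does exist. The whole argument amounts to producing such a bijection explicitly out of $\theta$ and then quoting Proposition~\ref{prop: gap class}.

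First I would take $\Theta_\mcW$ to be the restriction to simple closed walks of the map $\gamma\mapsto\theta(\gamma)$ defined just before Proposition~\ref{prop: gap class}. The substantive point is that this is a well-defined bijection from $\mcW_{\mcG_1}$ to $\mcW_{\mcG_2}$. Since $\theta$ is a graph isomorphism, both $\theta_V$ and $\theta_E$ are bijective, so $\theta(\gamma)$ is again a walk: the signs $\sigma_i'$ in the definition of $\theta(\gamma)$ are forced by the endpoint axiom $V_{\mcG_2}(\theta_E(e))=\theta_V(V_{\mcG_1}(e))$. Closedness is preserved because $v_0=v_n$ implies $\theta_V(v_0)=\theta_V(v_n)$, and simpleness is preserved because injectivity of $\theta_V$ turns $v_i\ne v_j$ into $\theta_V(v_i)\ne\theta_V(v_j)$. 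The inverse map is the walk map induced by the graph isomorphism $\theta^{-1}$, so $\Theta_\mcW$ is indeed a bijection.

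Then I would apply Proposition~\ref{prop: gap class} directly: for each $\gamma_1\in\mcW_{\mcG_1}$ we have $\theta(\gamma_1)=\Theta_\mcW(\gamma_1)$ by construction, and the equivalence $\widetilde{\mcG}_1\sim_{(\theta,\tau)}\widetilde{\mcG}_2$ together with this equality yields $\tau(\NV_{\phi_1}(\gamma_1))=\NV_{\phi_2}(\Theta_\mcW(\gamma_1))$. This contradicts the assumed nonexistence of such a bijection, completing the contrapositive.

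I do not expect a serious obstacle, since the corollary is essentially the contrapositive packaging of Proposition~\ref{prop: gap class}; the only genuine content is verifying that a graph isomorphism carries simple closed walks bijectively to simple closed walks. If any step needs care, it is confirming that $\theta(\gamma)$ conforms to the walk-formation conventions of the excerpt, in particular that each induced edge $\theta_E(e_i)$ can be consistently oriented with the images of its endpoints; but this is guaranteed by the map axiom $V_{\mcG_2}(\theta_E(e))=\theta_V(V_{\mcG_1}(e))$ together with the direction-preservation stipulated in Remark~\ref{rem: galois cover}~(iv).
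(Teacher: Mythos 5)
Your proposal is correct and follows essentially the same route as the paper: the paper's proof is exactly the contrapositive, noting that an isomorphism $\theta$ with $\widetilde{\mcG}_1\sim_{(\theta,\tau)}\widetilde{\mcG}_2$ induces a bijection $\Theta_\mcW$ on simple closed walks whose net voltage classes match by Proposition~\ref{prop: gap class}. You merely spell out the (easy) verification that $\gamma\mapsto\theta(\gamma)$ is a well-defined bijection preserving closedness and simpleness, which the paper leaves implicit; note only that the orientation of $\theta_E(e_i)$ is fixed by the sign convention in the definition of $\theta(\gamma)$ itself, not by Remark~\ref{rem: galois cover}~(iv), which concerns the covering map $\phi$ rather than $\theta$.
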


\begin{proof}
If $\theta:\mcG_1\to\mcG_2$ is an isomorphism such that $\widetilde{\mcG}_1\sim_{(\theta,\tau)}\widetilde{\mcG}_2$, then $\theta$ induces a bijection $\Theta_\mcW:\mcW_{\mcG_1}\to\mcW_{\mcG_2}$ such that $\tau(\NV_{\phi_1}(\gamma_1))=\NV_{\phi_2}(\theta_\mcW(\gamma_2))$ by Proposition~\ref{prop: gap class}. 
\end{proof}

We investigate properties of the net voltage classes. 
For a walk $\gamma=(v_0,e_1^{\sigma_1},\dots,v_{n})$ with $v_{n}=v_0$, we call the closed walk 
\[\gamma^{-1}:=(v_{n},e_{n}^{-\sigma_n},\dots,e_1^{-\sigma_1},v_0)\] 
the \textit{inverse walk} of $\gamma$, 
where $-\sigma_i:=\mp$ if $\sigma_i=\pm$, respectively. 

\begin{lem}\label{lem: inverse walk}
Let $\phi:\widetilde{\mcG}\to\mcG$ be a $G$-cover of a graph $\mcG$. 
Let $\gamma$ be a closed walk $(v_0,e_1^{\sigma_1},\dots,v_{n})$ on $\mcG$, and let $\tilde{v}_0$ be a vertex in $\phi_V^{-1}(v_0)$. 
Then 
\[ \NV_\phi(\gamma^{-1},\tilde{v}_0)=\{g^{-1}\mid g\in \NV_\phi(\gamma,\tilde{v}_0)\}. \]
\end{lem}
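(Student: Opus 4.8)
The plan is to exploit the evident reversal bijection between lifts of $\gamma$ and lifts of $\gamma^{-1}$, combined with the facts that the $G$-action on $\widetilde{\mcG}$ commutes with $\phi$ and preserves edge directions. I would first recall the characterization stated immediately after the definition of $\NV_\phi$: an element $g\in G$ lies in $\NV_\phi(\gamma,\tilde{v}_0)$ if and only if there exists a lift $\tilde{\gamma}=(\tilde{v}_0,\tilde{e}_1^{\sigma_1},\dots,\tilde{v}_n)$ of $\gamma$ whose terminal vertex is $\tilde{v}_n=g{\cdot}\tilde{v}_0$. This reduces everything to a statement about reversing lifts.

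The central observation is that reversing such a lift produces the sequence $\tilde{\gamma}^{-1}=(\tilde{v}_n,\tilde{e}_n^{-\sigma_n},\dots,\tilde{e}_1^{-\sigma_1},\tilde{v}_0)$, which is a genuine walk on $\widetilde{\mcG}$ (every directed edge may be traversed in either direction), and which projects under $\phi$ to $\gamma^{-1}$, since $\phi$ preserves directions by Remark~\ref{rem: galois cover} (iv). Hence $\tilde{\gamma}^{-1}$ is a lift of $\gamma^{-1}$, now with initial vertex $g{\cdot}\tilde{v}_0$ and terminal vertex $\tilde{v}_0$.

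Next I would translate this lift into one starting at $\tilde{v}_0$. Because the $G$-action is by graph automorphisms preserving edge directions and commutes with $\phi$ (as $\phi=i\circ q$ factors through the quotient by $G$, so $\phi(g{\cdot}x)=\phi(x)$), the walk $g^{-1}{\cdot}\tilde{\gamma}^{-1}$ is again a lift of $\gamma^{-1}$, with initial vertex $g^{-1}{\cdot}(g{\cdot}\tilde{v}_0)=\tilde{v}_0$ and terminal vertex $g^{-1}{\cdot}\tilde{v}_0$. By the characterization above this shows $g^{-1}\in\NV_\phi(\gamma^{-1},\tilde{v}_0)$, establishing the inclusion $\{g^{-1}\mid g\in\NV_\phi(\gamma,\tilde{v}_0)\}\subseteq\NV_\phi(\gamma^{-1},\tilde{v}_0)$.

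Finally, the reverse inclusion follows by symmetry: since $(\gamma^{-1})^{-1}=\gamma$, applying the inclusion just proved to the closed walk $\gamma^{-1}$ yields $\{h^{-1}\mid h\in\NV_\phi(\gamma^{-1},\tilde{v}_0)\}\subseteq\NV_\phi(\gamma,\tilde{v}_0)$, and taking inverses of both sides gives $\NV_\phi(\gamma^{-1},\tilde{v}_0)\subseteq\{g^{-1}\mid g\in\NV_\phi(\gamma,\tilde{v}_0)\}$; the two inclusions combine to the claimed equality. I expect the only delicate point to be the bookkeeping of direction signs under reversal, namely verifying that $\tilde{\gamma}^{-1}$ is both a legitimate walk and a lift of $\gamma^{-1}$, which is routine once one invokes that $\phi$ and the $G$-action both respect the plus/minus directions of edges.
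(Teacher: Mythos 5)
Your proposal is correct and follows essentially the same route as the paper's proof: both take a lift of $\gamma$ ending at $g{\cdot}\tilde{v}_0$, combine reversal of the walk with translation by $g^{-1}$ (the paper writes this in one step as $(g^{-1}{\cdot}\tilde{\gamma})^{-1}$, you as $g^{-1}{\cdot}\tilde{\gamma}^{-1}$, which is the same walk), and conclude $g^{-1}\in\NV_\phi(\gamma^{-1},\tilde{v}_0)$, finishing the other inclusion by symmetry. The bookkeeping of direction signs you flag as the only delicate point is handled exactly as in the paper, via the fact that $\phi$ and the $G$-action preserve the plus/minus directions.
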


\begin{proof}
Suppose that $g\in \NV_\phi(\gamma,\tilde{v}_0)$. 
There is a lift $\tilde{\gamma}:=(\tilde{v}_0,\tilde{e}_1^{\sigma_1},\dots,\tilde{v}_{n})$ of $\gamma$ such that $\tilde{v}_{n}=g{\cdot}\tilde{v}_0$. 
Since 
\[(g^{-1}{\cdot}\tilde{\gamma})^{-1}=(g^{-1}{\cdot}\tilde{v}_{n},(g^{-1}{\cdot}\tilde{e}_n)^{-\sigma_n},\dots,(g^{-1}{\cdot}\tilde{e}_1)^{-\sigma_1},g^{-1}{\cdot}\tilde{v}_0)\] 
is a walk on $\widetilde{\mcG}$, $g^{-1}\in \NV_\phi(\gamma^{-1},\tilde{v}_0)$. 
By the same argument, if $g\in \NV_\phi(\gamma^{-1},\tilde{v}_0)$, then $g^{-1}\in \NV_\phi(\gamma,\tilde{v}_0)$. 
Therefore, the assertion holds. 
\end{proof}

For two walks $\gamma_1=(v_0,\dots,v_{n})$ and $\gamma_2=(w_0,\dots,w_{m})$ on a graph $\mcG$ with $v_{n}=w_{0}$, 
let $\gamma_1\gamma_2$ denote the following walk of length $n+m$: 
\[ \gamma_1\gamma_2:=(v_0,\dots,v_{n},w_1,\dots,w_{m}). \]

\begin{lem}\label{lem: gap class of product}
Let $G$ be a finite group, and let $\phi:\widetilde{\mcG}\to\mcG$ be a $G$-cover of a graph $\mcG$. 
Let $\gamma_1$ and $\gamma_2$ be two closed walks on $\mcG$ with the same initial and terminal vertex $v_0$, and let $\tilde{v}_0$ be a vertex in $\phi_V^{-1}(v_0)$. 
Then 
\[ \NV_{\phi}(\gamma_1\gamma_2,\tilde{v}_0)=\NV_{\phi}(\gamma_1,\tilde{v}_0)\NV_{\phi}(\gamma_2,\tilde{v}_0), \]
where $S_1S_2=\{ g_1g_2\mid g_1\in S_1, g_2\in S_2 \}$ for two subsets $S_1, S_2 \subset G$. 
\end{lem}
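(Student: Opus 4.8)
The plan is to prove the set-equality $\NV_{\phi}(\gamma_1\gamma_2,\tilde{v}_0)=\NV_{\phi}(\gamma_1,\tilde{v}_0)\NV_{\phi}(\gamma_2,\tilde{v}_0)$ by the usual two inclusions, exploiting the ``lift-concatenation'' interpretation of $\NV_\phi$ recorded just before Lemma~\ref{lem: conjugate}: an element $g$ lies in $\NV_\phi(\delta,\tilde{v}_0)$ exactly when $\gamma$ admits a lift $\tilde\delta$ starting at $\tilde{v}_0$ and terminating at $g{\cdot}\tilde{v}_0$. The key structural fact I want to use is that a lift of the concatenation $\gamma_1\gamma_2$ is precisely a concatenation of a lift of $\gamma_1$ with a lift of $\gamma_2$ that agrees at the shared middle vertex (both $\gamma_1,\gamma_2$ are closed at $v_0$, so their lifts live over the common fiber $\phi_V^{-1}(v_0)$).

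First I would prove the inclusion $\NV_{\phi}(\gamma_1,\tilde{v}_0)\NV_{\phi}(\gamma_2,\tilde{v}_0)\subset\NV_{\phi}(\gamma_1\gamma_2,\tilde{v}_0)$. Take $g_1\in\NV_\phi(\gamma_1,\tilde{v}_0)$ and $g_2\in\NV_\phi(\gamma_2,\tilde{v}_0)$, and choose a lift $\tilde\gamma_1$ of $\gamma_1$ from $\tilde{v}_0$ to $g_1{\cdot}\tilde{v}_0$ and a lift $\tilde\gamma_2$ of $\gamma_2$ from $\tilde{v}_0$ to $g_2{\cdot}\tilde{v}_0$. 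Applying the element $g_1$ to $\tilde\gamma_2$ (using Remark~\ref{rem: galois cover} (iii) on transitivity of the $G$-action and (iv) on direction preservation) produces a lift $g_1{\cdot}\tilde\gamma_2$ of $\gamma_2$ starting at $g_1{\cdot}\tilde{v}_0$ and ending at $g_1g_2{\cdot}\tilde{v}_0$. Since $\tilde\gamma_1$ ends where $g_1{\cdot}\tilde\gamma_2$ begins, the concatenation $\tilde\gamma_1(g_1{\cdot}\tilde\gamma_2)$ is a well-defined lift of $\gamma_1\gamma_2$ from $\tilde{v}_0$ to $g_1g_2{\cdot}\tilde{v}_0$, whence $g_1g_2\in\NV_\phi(\gamma_1\gamma_2,\tilde{v}_0)$.

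Conversely, for $\NV_{\phi}(\gamma_1\gamma_2,\tilde{v}_0)\subset\NV_{\phi}(\gamma_1,\tilde{v}_0)\NV_{\phi}(\gamma_2,\tilde{v}_0)$, I would take any lift $\tilde\delta$ of $\gamma_1\gamma_2$ from $\tilde{v}_0$ to $h{\cdot}\tilde{v}_0$ and split it at the vertex sitting over the middle vertex $v_0$ (position $n$ in the sequence, where $n$ is the length of $\gamma_1$). Call that middle vertex $\tilde{u}$; since it lies in $\phi_V^{-1}(v_0)$ and the action is transitive there, $\tilde{u}=g_1{\cdot}\tilde{v}_0$ for some $g_1\in G$. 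The initial segment is a lift of $\gamma_1$ from $\tilde{v}_0$ to $g_1{\cdot}\tilde{v}_0$, so $g_1\in\NV_\phi(\gamma_1,\tilde{v}_0)$; translating the terminal segment by $g_1^{-1}$ gives a lift of $\gamma_2$ from $\tilde{v}_0$ to $g_1^{-1}h{\cdot}\tilde{v}_0$, so $g_2:=g_1^{-1}h\in\NV_\phi(\gamma_2,\tilde{v}_0)$, and $h=g_1g_2$ as required.

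I do not expect a genuine obstacle here; the content is entirely formal once the lift-concatenation dictionary is in place. The one point demanding care is the equivariance of translating a lift by a group element: I must invoke Remark~\ref{rem: galois cover} (ii) and (iv) to confirm that $g{\cdot}\tilde\gamma_2$ is again an \emph{admissible} walk (the $G$-action respects both the vertex-edge incidence and the chosen directions), so that the concatenations genuinely lift $\gamma_1\gamma_2$. A secondary bookkeeping subtlety is that the two given lifts both emanate from the \emph{same} base vertex $\tilde{v}_0$ over the common fiber, which is exactly why the group-translation trick is needed to align the endpoints before concatenating.
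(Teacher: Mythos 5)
Your proof is correct. The forward inclusion is exactly the paper's argument: translate a lift of $\gamma_2$ by $g_1$ (legitimate since the $G$-action preserves incidence and directions) and concatenate, giving a lift of $\gamma_1\gamma_2$ ending at $g_1g_2{\cdot}\tilde{v}_0$. Where you genuinely diverge is the reverse inclusion, and your route is cleaner. The paper splits the lift at the middle vertex, obtains $g_1\in\NV_\phi(\gamma_1,\tilde{v}_0)$, then invokes Lemma~\ref{lem: conjugate} to produce \emph{some} $g_2\in\NV_\phi(\gamma_2,\tilde{v}_0)$ with $g{\cdot}\tilde{v}_0=g_1g_2{\cdot}\tilde{v}_0$; because the cover may be branched, $\tilde{v}_0$ can have a nontrivial stabilizer $H_1=\Stab_G(\tilde{v}_0)$, so this equality only yields $g=g_1g_2g'$ with $g'\in H_1$, and the paper must then observe that $g_2g'$ still lies in $\NV_\phi(\gamma_2,\tilde{v}_0)$ (i.e., that $\NV_\phi(\gamma_2,\tilde{v}_0)$ is stable under right multiplication by the stabilizer). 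You bypass this entirely: by \emph{defining} $g_2:=g_1^{-1}h$ and translating the terminal segment by $g_1^{-1}$, you exhibit a lift of $\gamma_2$ from $\tilde{v}_0$ ending precisely at $(g_1^{-1}h){\cdot}\tilde{v}_0$, so membership in $\NV_\phi(\gamma_2,\tilde{v}_0)$ is immediate from the definition, with no stabilizer correction and no appeal to Lemma~\ref{lem: conjugate}. Both arguments are valid; yours buys a shorter converse that never has to confront the non-freeness of the action at vertices over branch points, which is the one genuinely delicate feature of this branched setting. One small citation nit: translating a lift by a group element uses Remark~\ref{rem: galois cover} (ii) and (iv) (incidence and direction preservation), which you do cite later; transitivity, Remark~\ref{rem: galois cover} (iii), is what lets you write the middle vertex as $g_1{\cdot}\tilde{v}_0$, not what justifies the translation itself.
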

\begin{proof}
Let $g$ be an element of $\NV_{\phi}(\gamma_1\gamma_2,\tilde{v}_0)$. 
Then we have $g{\cdot}\tilde{v}_0\in\widetilde{V}_{\phi,m+n}^{\gamma_1\gamma_2}(\tilde{v}_0)$. 
Hence there exists a lift $\tilde{\gamma}=(\tilde{v}_0,\dots,\tilde{v}_{m+n})$ of $\gamma_1\gamma_2$ with $\tilde{v}_{m+n}=g{\cdot}\tilde{v}_0$. 
Since $(\tilde{v}_0,\dots,\tilde{v}_{n})$ is a lift of $\gamma_1$, there is an element $g_1\in \NV_{\phi}(\gamma_1,\tilde{v}_0)$ such that $\tilde{v}_{n}=g_1{\cdot}\tilde{v}_0$. 
Since $\NV_{\phi}(\gamma_2,g_1{\cdot}\tilde{v}_0)=g_1\NV_{\phi}(\gamma_2,\tilde{v}_0)g_1^{-1}$ by Lemma~\ref{lem: conjugate} and $(\tilde{v}_{n},\dots,\tilde{v}_{m+n})$ is a lift of $\gamma_2$, there exists an element $g_2\in \NV_{\phi}(\gamma_2,\tilde{v}_0)$ such that 
\[ g{\cdot}\tilde{v}_0=\tilde{v}_{m+n}=(g_1g_2g_1^{-1}){\cdot}\tilde{v}_n=g_1g_2{\cdot}\tilde{v}_0. \]
Since $G$ acts on $\phi_V^{-1}(v_0)$ transitively, $\phi_V^{-1}(v_0)$ is isomorphic to the set of cosets $G/ H_1$ as $G$-sets, where $H_1=\{g'\in G\mid g'{\cdot}\tilde{v}_0=\tilde{v}_0\}$. 
Hence there is an element $g'\in H_1$ such that $g=g_1g_2g'$. 
Therefore, $g\in \NV_{\phi}(\gamma_1,\tilde{v}_0){\cdot}\NV_{\phi}(\gamma_2,\tilde{v}_0)$ since $g_2g'\in \NV_{\phi}(\gamma_2,\tilde{v}_0)$. 

Conversely, suppose that $g_i\in \NV_{\phi}(\gamma_i,\tilde{v}_0)$ for $i=1,2$. 
Then there are two lifts $\tilde{\gamma}_1=(\tilde{v}_0,\dots,\tilde{v}_{n})$ and $\tilde{\gamma}_2=(\tilde{w}_0,\dots,\tilde{w}_{m})$ of $\gamma_1$ and $\gamma_2$ such that $\tilde{w}_0=\tilde{v}_0$, $\tilde{v}_{n}=g_1{\cdot}\tilde{v}_0$ and $\tilde{w}_{m}=g_2{\cdot}\tilde{w}_0$. 
Then the ordered set 
\[ \tilde{\gamma}_1(g_1{\cdot}\tilde{\gamma}_2)=(\tilde{v}_0,\dots,\tilde{v}_{n},g_1{\cdot}\tilde{w}_1,\dots,g_1{\cdot}\tilde{w}_{m}) \]
is a lift of $\gamma_1\gamma_2$. 
Since $g_1{\cdot}\tilde{w}_{m}=g_1g_2{\cdot}\tilde{v}_0$, we obtain $g_1g_2\in \NV_{\phi}(\gamma_1\gamma_2,\tilde{v}_0)$. 
\end{proof}

For a closed walk $\gamma=(v_0,e_1^{\sigma_1},\dots,v_{n})$ on a graph $\mcG$ (hence $v_0=v_{n}$) and $j\in\ZZ_{\geq 0}$, let $\gamma^{(j)}$ be the following closed walk 
\[ \gamma^{(j)}:=(v_{\bar{j}},e_{\bar{j}+1}^{\sigma_{\bar{j}+1}},\dots,v_n,e_{1}^{\sigma_1},v_1,\dots,v_{\bar{j}}) \]
if $j\not\equiv 0\pmod{n}$
where $\bar{j}$ is the integer $0<\bar{j}< n$ with $\bar{j}\equiv j\pmod{n}$, 
and put $\gamma^{(j)}:=\gamma$ otherwise. 
In the case where the Galois group is abelian, the net voltage class of a closed walk does not depend on its initial vertex by the following lemma. 

\begin{lem}\label{lem: gap of shifted walk}
Let $G$ be a finite abelian group, let $\phi:\widetilde{\mcG}\to\mcG$ be a $G$-cover of graphs, and let $\gamma$ be a closed walk on $\mcG$. 
Then, for any $j\in\ZZ_{\geq 0}$,
\[ \NV_{\phi}(\gamma)=\NV_{\phi}(\gamma^{(j)}). \]
\end{lem}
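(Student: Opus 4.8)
The plan is to exploit the abelian hypothesis to reduce the statement about net voltage \emph{classes} to an equality of \emph{subsets} of $G$, then to realize the shift $\gamma\mapsto\gamma^{(j)}$ as a cyclic rotation of a factorization $\gamma=\gamma_1\gamma_2$, and finally to match lifts of $\gamma_1\gamma_2$ and $\gamma_2\gamma_1$ by translating half of a lift with the $G$-action.

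First I would record the simplification coming from $G$ abelian: then $gSg^{-1}=S$ for every subset $S\subseteq G$, so each class $\NV_\phi(\gamma)$ is a singleton consisting of one subset of $G$, and by Lemma~\ref{lem: conjugate} the subset $\NV_\phi(\gamma,\tilde{v}_0)$ is independent of the chosen lift $\tilde{v}_0\in\phi_V^{-1}(v_0)$ (and likewise for $\gamma^{(j)}$). Hence it suffices to prove, for $j$ with $0<\bar{j}<n$ (the case $j\equiv 0\pmod n$ being trivial since $\gamma^{(j)}=\gamma$), the equality of subsets $\NV_\phi(\gamma,\tilde{v}_0)=\NV_\phi(\gamma^{(j)},\tilde{u}_0)$ for any base points $\tilde{v}_0\in\phi_V^{-1}(v_0)$ and $\tilde{u}_0\in\phi_V^{-1}(v_{\bar{j}})$.

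Next I would split $\gamma$ at position $\bar{j}$: setting $\gamma_1:=(v_0,e_1^{\sigma_1},\dots,v_{\bar{j}})$ and $\gamma_2:=(v_{\bar{j}},e_{\bar{j}+1}^{\sigma_{\bar{j}+1}},\dots,v_n)$ one has $\gamma=\gamma_1\gamma_2$ and, directly from the definition of $\gamma^{(j)}$, $\gamma^{(j)}=\gamma_2\gamma_1$. Note that $\gamma_1,\gamma_2$ need not be closed, so Lemma~\ref{lem: gap class of product} does not apply and I argue directly. For the inclusion $\subseteq$, take $g\in\NV_\phi(\gamma_1\gamma_2,\tilde{v}_0)$ and a lift $\tilde{\gamma}$ of $\gamma_1\gamma_2$ from $\tilde{v}_0$ to $g{\cdot}\tilde{v}_0$. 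Decompose $\tilde{\gamma}=\tilde{\gamma}_1\tilde{\gamma}_2$, where $\tilde{\gamma}_1$ lifts $\gamma_1$ from $\tilde{v}_0$ to its terminal vertex $\tilde{u}_0\in\phi_V^{-1}(v_{\bar{j}})$, and $\tilde{\gamma}_2$ lifts $\gamma_2$ from $\tilde{u}_0$ to $g{\cdot}\tilde{v}_0$. Because the $G$-action commutes with $\phi$ and preserves edge directions (Remark~\ref{rem: galois cover}~(iv)), the translate $g{\cdot}\tilde{\gamma}_1$ is again a lift of $\gamma_1$, now from $g{\cdot}\tilde{v}_0$ to $g{\cdot}\tilde{u}_0$. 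Concatenating, $\tilde{\gamma}_2\,(g{\cdot}\tilde{\gamma}_1)$ is a lift of $\gamma_2\gamma_1$ from $\tilde{u}_0$ to $g{\cdot}\tilde{u}_0$, so $g\in\NV_\phi(\gamma_2\gamma_1,\tilde{u}_0)$. By the base-point independence from the first step, $g$ lies in the common subset, and the reverse inclusion follows by the symmetric argument (rotating $\gamma_2\gamma_1$ back to $\gamma_1\gamma_2$), yielding equality.

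The only real obstacle is the bookkeeping in the decomposition step: one must check that cutting a lift of $\gamma_1\gamma_2$ in the middle and re-gluing the first half \emph{translated by $g$} genuinely produces a valid lift of $\gamma_2\gamma_1$ with the correct terminal vertex. This rests entirely on the fact that $G$ acts by graph automorphisms compatible with $\phi$ and with the chosen edge directions; once that is invoked, the remaining verifications are formal.
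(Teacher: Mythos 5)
Your proof is correct and takes essentially the same approach as the paper's. The paper likewise uses abelianness (via Lemma~\ref{lem: conjugate}) to regard $\NV_\phi(\gamma)$ as a single base-point-independent subset of $G$, builds exactly your re-glued lift — the second half of $\tilde{\gamma}$ followed by the $g$-translate of the first half, i.e.\ $\tilde{\gamma}_2\,(g{\cdot}\tilde{\gamma}_1)$ — to get $\NV_\phi(\gamma)\subset\NV_\phi(\gamma^{(j)})$, and obtains the reverse inclusion from $\gamma=(\gamma^{(j)})^{(n-\bar{j})}$, which is your symmetric rotation argument.
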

\begin{proof}
Let $v_0$ be the initial vertex of $\gamma$. 
Since $G$ is abelian, $\NV_{\phi}(\gamma,\tilde{v}_0)$ does not depend on the choice of $\tilde{v}_0\in\phi_V^{-1}(v_0)$, and we can regard $\NV_\phi(\gamma)$ as a subset of $G$. 
Fix $\tilde{v}_0\in\phi_V^{-1}(v_0)$, and take an element $g\in \NV_{\phi}(\gamma)$. 
Let $\tilde{\gamma}=(\tilde{v}_0,\tilde{e}_1^{\sigma_1},\dots,\tilde{v}_{n})$ be a lift of $\gamma$ under $\phi$ such that $\tilde{v}_{n}=g{\cdot}\tilde{v}_0$. 
By Remark~\ref{rem: galois cover} (ii), the ordered set 
\[ \tilde{\gamma}':=(\tilde{v}_{\bar{j}},\tilde{e}_{\bar{j}+1}^{\sigma_{\bar{j}+1}},\dots,\tilde{v}_n,(g{\cdot}\tilde{e}_{1})^{\sigma_1},g{\cdot}\tilde{v}_1,\dots,g{\cdot}\tilde{v}_{\bar{j}}) \]
is a walk on $\widetilde{\mcG}$, hence it is a lift of $\gamma^{(j)}$. 
Thus $g\in \NV_{\phi}(\gamma^{(j)})$, and we obtain $\NV_{\phi}(\gamma)\subset \NV_{\phi}(\gamma^{(j)})$. 
Since $\gamma=(\gamma^{(j)})^{(n-\bar{j})}$, we also have $\NV_{\phi}(\gamma^{(j)})\subset \NV_{\phi}(\gamma)$. 
\end{proof}

In the case of abelian covers, we may regard a closed walk $\gamma$ as a class $[\gamma]:=\{\gamma^{(j)}\mid j\in\ZZ_{\geq0}\}$ in order to compute the net voltage class by Lemma~\ref{lem: gap of shifted walk}. 
The next example shows that Lemma~\ref{lem: gap of shifted walk} fails in the case where $G$ is not abelian.  

\begin{ex}
Let $\gtS_3$ be the symmetric group of three letters, and let $\mcG$ be the complete graph of order three. 
We construct a branched $\gtS_3$-cover of $\mcG$. 
Put $V_\mcG:=\{a,b,c\}$ and $E_\mcG:=\{e_{ab}, e_{bc}, e_{ca}\}$, where $e^+_{xy}(0)=x$ and $e^+_{xy}(1)=y$. 
Note that, since $\gtS_3$ acts transitively on $\phi_V^{-1}(v)$ for any $\gtS_3$-cover $\phi:\widetilde{\mcG}\to\mcG$ and each $v\in V_\mcG$, we can regard $\phi_V^{-1}(v)$ as the set of cosets of a certain subgroup of $\gtS_3$. 
We define a branched $\gtS_3$-cover $\phi:\widetilde{\mcG}\to\mcG$ of $\mcG$ as follows: 

Let $H_1$ and $H_2$ be the cyclic subgroups of $\gtS_3$ generated by $(1\,2)$ and $(1\, 3)$, respectively. 
Let $a_i$ and $b_i$ ($i=1,2,3$) be the cosets of $\gtS_3/H_1$ and $\gtS_3/H_2$, respectively, as follows: 
\begin{align*} 
&a_1=H_1,  &a_2=(1\, 3)H_1,&  &a_3=(2\, 3)H_1, \\ 
&b_1=H_2,  &b_2=(1\, 2)H_2, & &b_3=(2\, 3)H_2. 
\end{align*}
We put $c_i$ ($i=1,\dots,6$) as the elements of $\gtS_3$ as follows: 
\begin{align*}
c_1&= \id,  & c_2&=(1\,3), & c_3&=(1\,2), & c_4&=(1\,2\,3), & c_5&=(1\, 3\, 2) & c_6&=(2\,3). 
\end{align*}
Put $V_{\widetilde{\mcG}}:=\{ a_1,a_2,a_3, b_1,b_2,b_3, c_1,\dots,c_6 \}$, and define $\phi_V:V_{\widetilde{\mcG}}\to V_{\mcG}$ by 
$\phi_V(x_i)=x$ for $x=a,b,c$. 
Note that $\gtS_3$ acts on $\phi_V^{-1}(v)$ from left for each $v\in V_\mcG$. 
Assume that $e_{a_1b_1},e_{b_1c_1}, e_{c_1a_1}\in E_{\widetilde{\mcG}}$. 
Then the set $E_{\widetilde{\mcG}}$ of an $\gtS_3$-cover $\widetilde{\mcG}$ is determined by Definition~\ref{def: Galois cover} as Figure~\ref{fig: S3-cover}. 
Here, each arrowhead in Figure~\ref{fig: S3-cover} represents the plus direction of each edges. 

Let $\gamma$ be the closed walk $(a,e_{ab},b,e_{bc},c,e_{ca},a)$ on $\mcG$. 
We see that $\widetilde{V}_{\phi,4}^{\gamma}(a_1)=\{ a_1,a_2,a_3 \}=\phi_V^{-1}(a)$, 
hence we have $\NV_{\phi}(\gamma,a_1)=\gtS_3$. 
For the walk $\gamma^{(2)}=(c,e_{ca},a,e_{ab},b,e_{bc},c)$, we can see that $\widetilde{V}_4^{\gamma^{(2)}}(c_1)=\{ c_1,c_2,c_3,c_5 \}$. 
Hence we obtain 
\[ \NV_\phi(\gamma^{(2)},c_1)=\{ \id, (1\,3), (1\,2), (1\,3\,2) \}. \]
Therefore, $\NV_\phi(\gamma)\ne \NV_\phi(\gamma^{(2)})$. 
\begin{figure}
\begin{center}
\input{S3_cover_v3}
\caption{An $\gtS_3$-cover $\phi:\widetilde{\mcG}\to\mcG$}
\label{fig: S3-cover}
\end{center}
\end{figure}

\end{ex}

\begin{rem}\label{rem: abelian case}
Let $G$ be a finite abelian group, let $\phi:\widetilde{\mcG}\to\mcG$ be a $G$-cover of a graph $\mcG$, and let $\gamma=(v_0,e_1^{\sigma_1},\dots,v_{n})$ be a closed walk on $\mcG$. 
If $v_{i}=v_{j}$ for some $0\leq i<j< n$, then $\gamma^{(i)}$ splits into two closed walks: 
\[ \gamma^{(i)}=(v_{i},e_{i+1}^{\sigma_{i+1}},\dots,v_{j},e_{j+1}^{\sigma_{j+1}},\dots,v_{n},e_1^{\sigma_1},v_{1},\dots,v_{i})=\gamma_1\gamma_2, \]
where $\gamma_1=(v_{i},e_{i+1}^{\sigma_{i+1}},\dots,v_{j})$ and $\gamma_2=(v_{j},e_{j+1}^{\sigma_{j+1}},\dots,v_n,e_{1}^{\sigma_1},v_1,\dots,v_{i})$. 
By Lemma~\ref{lem: gap class of product} and \ref{lem: gap of shifted walk}, 
we have 
\[ \NV_{\phi}(\gamma)=\NV_{\phi}(\gamma_1)\NV_{\phi}(\gamma_2). \] 
Thus, in the case where $G$ is abelian, it is enough to compute $\NV_{\phi}(\gamma)$ for simple closed walks $\gamma$ in order to compute net voltage classes for all closed walks on $\mcG$. 
\end{rem}

\section{Splitting graphs and embedded topology}\label{sec: splitting graph}

In this section, we define the \textit{splitting graph} of plane curves for a Galois cover of $\PP^2$, and study a relation between the splitting graph and the embedded topology of plane curves. 
Here  a Galois cover of $\PP^2$ is a finite morphism $\phi:X\to\PP^2$ such that $X$ is normal and the extension of rational function fields $\CC(X)/\CC(\PP^2)$ is Galois. 
In the first subsection, we consider the splitting graph for general Galois covers. 
In the second subsection, we give a method of computing net voltage classes for the splitting graph in the case of cyclic covers with a certain assumption. 

\subsection{Splitting graphs for Galois covers}

We prepare some notation in order to define the splitting graph. 
Let $Y$ be a normal surface, let $\mcC$ be a reduced Weil divisor on $Y$, and 
let $P\in\mcC$ be a singular point of $\mcC$ such that $Y$ is smooth at $P$. 
We define $\Irr(\mcC)$ and $\LB_P(\mcC)$ as the sets of irreducible components of $\mcC$ and local branches of $\mcC$ at $P$, respectively. 
%Let $\LB_P(\mcC)$ be the set of local branches of $\mcC$ at $P$. 
Here a \textit{local branch} of $\mcC$ at $P$ is an irreducible component of the germ $(\mcC,P)$. 
For $C\in\Irr(\mcC)$ and $b\in\LB_P(\mcC)$, 
we say that \textit{$C$ contains $b$}, denote by $b\subset C$, if $b$ is an irreducible component of the germ $(C,P)$. 
By abuse of notation, for $b\in\LB_P(\mcC)$ and a morphism $\varphi:Y\to Y'$ with $Y'$ smooth at $\varphi(P)$, 
let $\varphi(b)$ denote the image of $b$ under the morphism $\varphi_P|_{(\mcC,P)}:(\mcC,P)\to(Y',\varphi(P))$ of germs. 
For a reduced Weil divisor $\mcB$ on $Y$ with $\Sing(Y)\subset\mcB$, 
put $\Sing(\mcC\setminus\mcB):=\{P\in\Sing(\mcC)\mid P\not\in\mcB\}$. 

\begin{defin}\label{def: splitting graph}
Let $G$ be a finite group, and let $\phi:X\to\PP^2$ be a $G$-cover of $\PP^2$. 
Let $\mcB_{\phi}$ be the branch locus of $\phi$, and let $\mcC\subset\PP^2$ be a plane curve such that $\mcC\cap\mcB_\phi$ is finite (equivalently, $\mcC$ and $\mcB_\phi$ have no common components). 
\begin{enumerate}
\item The \textit{incidence graph of $\mcC$ with respect to $\phi$} is the bipartite graph $\mcG=\mcG_{\phi,\mcC}$ such that the set $V_\mcG$ of vertices has the partition $(V_{\mcG,0},V_{\mcG,1})$, where 
\begin{align*} 
V_{\mcG,0}&:=\{v_P\mid P\in\Sing(\mcC\setminus\mcB_\phi)\} \ \mbox{ and } \\ 
V_{\mcG,1}&:=\{v_C\mid C\in\Irr(\mcC) \}; 
\end{align*}
and the set $E_{\mcG}$ of edges is 
\[ E_\mcG:=\bigcup_{P\in\Sing(\mcC\setminus\mcB_\phi)}\{e_{P,b} \mid b\in\LB_P(\mcC) \}, \]
where we define the initial and terminal vertices of the plus direction $e^+_{P,b}$ of $e_{P,b}\in E_{\mcG}$ by 
\begin{align*} 
e^+_{P,b}(0)&:=v_P\in V_{\mcG,0},   &  e^+_{P,b}(1)&:=v_C\in V_{\mcG,1} 
\end{align*}
for the irreducible component $C\in\Irr(\mcC)$ with $b\subset C$. 
\item The \textit{splitting graph of $\mcC$ for $\phi$} is the graph $\mcS:=\mcS_{\phi,\mcC}$ with 
the action of $G$ satisfying the following conditions;
	\begin{enumerate}
	\item the set $V_{\mcS}$ has the partition $(\widetilde{V}_{\mcS,0},\widetilde{V}_{\mcS,1})$, where 
	\begin{align*} 
	\widetilde{V}_{\mcS,0}&:=\bigcup_{P\in\Sing(\mcC\setminus\mcB_\phi)}\left\{v_{\widetilde{P}}\,\middle|\, \widetilde{P}\in\phi^{-1}(P)\right\},  \\ 
	\widetilde{V}_{\mcS,1}&:=\left\{ v_{\widetilde{C}}\,\middle|\, \widetilde{C}\in\Irr(\phi^\ast\mcC) \right\};  
	\end{align*}
	\item the set $E_{\mcS}$ of edges is 
	\[ E_{\mcS}:=\bigcup_{\widetilde{P}\in\phi^{-1}(\Sing(\mcC\setminus\mcB_\phi))}\left\{e_{\widetilde{P},\tilde{b}} \,\middle|\, \tilde{b}\in\LB_{\widetilde{P}}(\phi^\ast\mcC)\right\}, \] 
	where $e_{\widetilde{P},\tilde{b}}^+(0):=v_{\widetilde{P}}\in\widetilde{V}_{\mcS,0}$ and $e_{\widetilde{P},\tilde{b}}^+(1):=v_{\widetilde{C}}\in\widetilde{V}_{\mcS,1}$ for $\widetilde{C}\in\Irr(\phi^\ast\mcC)$ with $\tilde{b}\subset\widetilde{C}$; 
	\item $G$ acts on $\mcS$ via the image under the covering transformation $g:X\to X$ for each $g\in G$, 
	i.e., for $v_{\tilde{x}}\in V_{\mcS}$, $e_{\widetilde{P},\tilde{b}}\in E_{\mcS}$ and $g\in G$, we define $g{\cdot}v_{\tilde{x}}:=v_{g(\tilde{x})}$ and $g{\cdot}e_{\widetilde{P},\tilde{b}}:=e_{g(\widetilde{P}),g(\tilde{b})}$ (see Remark~\ref{rem: action on splitting graph}).  
\end{enumerate}
\end{enumerate}

\end{defin}

\begin{rem}\label{rem: action on splitting graph}
We define the action of $G$ on $\widetilde{V}_{\mcS,1}$ by using the image $\widetilde{C}\mapsto g(\widetilde{C})$ for $g\in G$ and $\widetilde{C}\in\Irr(\phi^\ast\mcC)$, NOT the pull-back $\widetilde{C}\mapsto g^\ast\widetilde{C}$. 
Since $\tilde{b}\subset\widetilde{C}$ if and only if $g(\tilde{b})\subset g(\widetilde{C})$ for $\tilde{b}\in\LB_{\widetilde{P}}(\phi^\ast\mcC)$ ($\widetilde{P}\in\phi^{-1}(\Sing(\mcC\setminus\mcB_\phi))$) and $\widetilde{C}\in\Irr(\phi^\ast\mcC)$, 
the endpoint sets satisfy $g{\cdot}(V_{\mcS}(e_{\widetilde{P},\tilde{b}}))=V_{\mcS}(g{\cdot}e_{\widetilde{P},\tilde{b}})$, hence $G$ acts on $\mcS$ by Definition~\ref{def: splitting graph} (ii-c). 
\end{rem}

\begin{lem}\label{lem: splitting graph map}
Let $\phi:X\to\PP^2$ be a $G$-cover, and let $\mcC\subset\PP^2$ be a plane curve such that $\mcC\cap\mcB_\phi$ is finite. 
Put the incidence graph $\mcG:=\mcG_{\phi,\mcC}$ and the splitting graph $\mcS:=\mcS_{\phi,\mcC}$. 
Let $\phi_{\mcC}=(\phi_{\mcC,V},\phi_{\mcC,E}):\mcS\to\mcG$ be the map defined by 
\begin{align*}
\phi_{\mcC,V}(v_{\tilde{x}})&:=v_{\phi(\tilde{x})}, &
\phi_{\mcC,E}(e_{\widetilde{P},\tilde{b}})&:=e_{\phi(\widetilde{P}),\phi(\tilde{b})}
\end{align*}
for $\tilde{x}\in\phi^{-1}(\Sing(\mcC\setminus\mcB_\phi))\cup\Irr(\phi^\ast\mcC)$, $\widetilde{P}\in\phi^{-1}(\Sing(\mcC\setminus\mcB_\phi))$ and $\tilde{b}\in\LB_{\widetilde{P}}(\phi^\ast\mcC)$. 
Then $\phi_\mcC$ is a $G$-cover of graphs. 
\end{lem}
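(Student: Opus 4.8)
The plan is to verify the three defining conditions of a $G$-cover from Definition~\ref{def: Galois cover} for the map $\phi_{\mcC}$, after first checking that $\phi_{\mcC}$ is a well-defined map of graphs. The well-definedness amounts to checking that $\phi_{\mcC}$ respects the endpoint structure, i.e., $V_{\mcG}(\phi_{\mcC,E}(e_{\widetilde{P},\tilde{b}}))=\phi_{\mcC,V}(V_{\mcS}(e_{\widetilde{P},\tilde{b}}))$. This follows because $\phi$ sends a local branch $\tilde{b}\subset\widetilde{C}$ at $\widetilde{P}$ to the branch $\phi(\tilde{b})\subset\phi(\widetilde{C})=:C$ at $P=\phi(\widetilde{P})$, so that the edge $e_{\widetilde{P},\tilde{b}}$, with endpoints $v_{\widetilde{P}}$ and $v_{\widetilde{C}}$, maps to the edge $e_{P,\phi(\tilde{b})}$ with endpoints $v_P$ and $v_C$, matching the images of the two endpoints under $\phi_{\mcC,V}$. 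I would record that $\phi$ preserves the direction of edges (so that $\phi_{\mcC}$ does too), matching the convention in Remark~\ref{rem: galois cover} (iv).

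Next I would establish condition (i), that $G$ acts on $\mcS$: this is essentially already supplied by Definition~\ref{def: splitting graph} (ii-c) together with Remark~\ref{rem: action on splitting graph}, which checks that the assignments $g{\cdot}v_{\tilde{x}}:=v_{g(\tilde{x})}$ and $g{\cdot}e_{\widetilde{P},\tilde{b}}:=e_{g(\widetilde{P}),g(\tilde{b})}$ respect endpoints; so I would simply cite this. For condition (ii), the quotient condition, I would argue that the covering transformations of $\phi:X\to\PP^2$ act transitively on the fibers $\phi^{-1}(P)$ over each point $P$ and on the set $\Irr(\phi^\ast\mcC)$ lying over each component $C$, since $\CC(X)/\CC(\PP^2)$ is Galois with group $G$. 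Consequently the orbits of $G$ on $V_{\mcS}$ are exactly the fibers $\phi_{\mcC,V}^{-1}(v_P)$ and $\phi_{\mcC,V}^{-1}(v_C)$, and the orbits on $E_{\mcS}$ are exactly the fibers $\phi_{\mcC,E}^{-1}(e_{P,b})$; this gives a canonical identification $\mcS/G\cong\mcG$ as graphs, providing the required isomorphism $i:\mcS/G\to\mcG$ with $\phi_{\mcC}=i\circ q$.

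The main obstacle — and the step I would treat most carefully — is condition (iii): that $G$ acts freely on $E_{\mcS}$ whenever $E_{\mcG}\neq\emptyset$. An edge $e_{\widetilde{P},\tilde{b}}$ is fixed by $g$ exactly when $g(\widetilde{P})=\widetilde{P}$ and $g(\tilde{b})=\tilde{b}$. Freeness fails only if some nontrivial $g$ stabilizes both a point $\widetilde{P}\in\phi^{-1}(\Sing(\mcC\setminus\mcB_\phi))$ and a local branch $\tilde{b}$ of $\phi^\ast\mcC$ through it. The key point is that $P\notin\mcB_\phi$ by the definition of $\Sing(\mcC\setminus\mcB_\phi)$, so $\phi$ is unramified (étale) over a neighbourhood of $P$; hence the stabilizer of $\widetilde{P}$ in $G$ is trivial, forcing $g=\id_G$. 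I would spell out that because $\phi$ is étale at $\widetilde{P}$ the induced map on local branches is a bijection and the decomposition group at $\widetilde{P}$ is trivial, which is precisely what rules out any nontrivial edge-stabilizer.

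Having verified (i), (ii) and (iii), I would conclude that $\phi_{\mcC}:\mcS\to\mcG$ is a $G$-cover of graphs. The whole argument is conceptual rather than computational: the content lies in translating the geometric facts about the Galois cover $\phi$ of surfaces — transitivity of $G$ on fibers, the étale condition away from the branch locus $\mcB_\phi$, and triviality of decomposition groups over unramified points — into the combinatorial axioms of Definition~\ref{def: Galois cover}.
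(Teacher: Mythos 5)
Your proof is correct and takes essentially the same approach as the paper: verify endpoint compatibility to get a map of graphs, use transitivity of $G$ on the fibers $\phi^{-1}(P)$ and on $\Irr(\phi^\ast C)$ to identify $\mcG$ with $\mcS/G$, and exploit the fact that $P\in\Sing(\mcC\setminus\mcB_\phi)$ lies off the branch locus to conclude freeness on edges. The paper phrases this last step as $G$ acting transitively and freely on the set of local branches of $\phi^\ast\mcC$ lying over a branch $b\in\LB_P(\mcC)$, which is the same unramifiedness fact you invoke via trivial point stabilizers and the bijection on local branches.
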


\begin{proof}
Let $e_{\widetilde{P},\tilde{b}}$ be an edge of $\mcS$. 
Its endpointset is $V_\mcS(e_{\widetilde{P},\tilde{b}})=\{v_{\widetilde{P}},v_{\widetilde{C}}\}$ for $\widetilde{C}\in\Irr(\phi^\ast\mcC)$ with $\tilde{b}\subset \widetilde{C}$. 
Put $P:=\phi(\widetilde{P})$, $C:=\phi(\widetilde{C})$ and $b:=\phi(\tilde{b})$ the local branch of $\mcC$ at $P$. 
Since $b\subset C$, we have $V_\mcG(e_{P,b})=\{v_P,v_C\}$. 
Thus $\phi_{\mcC,V}(V_\mcS(e_{\widetilde{P},\tilde{b}}))=V_{\mcG}(\phi_{\mcC,E}(e_{\widetilde{P},\tilde{b}}))$, and $\phi_\mcC$ is a map of graphs. 

The group $G$ acts transitively on both of $\Irr(\phi^\ast C)$ and $\phi^{-1}(P)$ for $C\in\Irr(\mcC)$ and $P\in\Sing(\mcC\setminus\mcB_\phi)$. 
Hence $G$ acts transitively on both of $\phi_{V}^{-1}(v_C)$ and $\phi_V^{-1}(v_P)$, and we have $V_{\mcG}=V_{\mcS}/G$. 
Since $P\in\Sing(\mcC\setminus\mcB_\phi)$ is not a branch point of $\phi$, $G$ acts transitively and freely on the set 
\[ \{\tilde{b}\in\LB_{\widetilde{P}}(\phi^\ast\mcC)\mid \widetilde{P}\in\phi^{-1}(P), \phi(\tilde{b})=b\} \] 
for $b\in\LB_P(\mcC)$. 
Thus $G$ acts transitively and freely on $\phi_E^{-1}(e_{P,b})$ for $P\in\Sing(\mcC\setminus\mcB_\phi)$ and $b\in\LB_P(\mcC)$, and we obtain $E_{\mcG}=E_{\mcS}/G$. 
Therefore, $\phi$ is a $G$-cover of graphs. 
\end{proof}

Let $G$ be a finite group, and let $\mcB\subset\PP^2$ be a plane curve. 
It is known that a surjective homomorphism $\rho:\pi_1(\PP^2\setminus\mcB)\twoheadrightarrow G$ induces a $G$-cover $\phi:X\to\PP^2$, uniquely up to isomorphism over $\PP^2$, whose branch locus is contained in $\mcB$. 
Here $\pi_1(\PP^2\setminus\mcB)$ is the fundamental group of $\PP^2\setminus\mcB$. 
Conversely, a $G$-cover $\phi:X\to\PP^2$ branched at $\mcB$ induces a surjective homomorphism $\rho:\pi_1(\PP^2\setminus \mcB)\twoheadrightarrow G$. 
We roughly recall the surjection $\rho:\pi_1(\PP^2\setminus\mcB)\twoheadrightarrow G$ induced by a $G$-cover $\phi:X\to\PP^2$ (cf. \cite{namba} for details). 

Let $\phi:X\to\PP^2$ be a $G$-cover branched at $\mcB$, and we fix a base point $P_0\in\PP^2\setminus\mcB$. 
Put $U:=\PP^2\setminus \mcB$ and $\widetilde{U}:=X\setminus\phi^{-1}(\mcB)$. 
Any element $[\gamma]$ of $\pi_1(U)=\pi_1(U,P_0)$ can be represented by a closed path $\gamma:[0,1]\to U$ with $\gamma(0)=\gamma(1)=P_0$. 
For a point $\widetilde{P}\in \widetilde{U}$, put $P:=\phi(\widetilde{P})$. 
Let $p:[0,1]\to U$ be a path from $P$ to $P_0$. 
Then the closed path $p^{-1}\gamma p$ is uniquely lifted in a path $\lambda:[0,1]\to \widetilde{U}$ with $\lambda(0)=\widetilde{P}$. 
It is known that the point $\lambda(1)$  depends only on the choice of $[\gamma]$ and $\widetilde{P}$, 
and that $[\gamma]$ gives an isomorphism $g_{\gamma}:\widetilde{U}\to\widetilde{U}$ defined by $g_{\gamma}(\widetilde{P})=\lambda(1)$. 
Then this correspondence gives a surjective homomorphism $\pi_1(U)\to \Aut_{\phi}(\widetilde{U})$ defined by $[\gamma]\mapsto g_{\gamma}$, 
where $\Aut_{\phi}(\widetilde{U})$ is the group $\{ g\in\Aut(\widetilde{U}) \mid \phi\circ g=\phi \}$. 
Since $\Aut_\phi(\widetilde{U})$ is isomorphic to $G$, we obtain a surjective homomorphism $\rho:\pi_1(U)\to G$. 

\begin{thm}\label{thm: splitting graph}
Let $G$ be a finite group, and let $\mcB_i$ ($i=1,2$) be two plane curves such that there are surjections $\rho_i:\pi_1(\PP^2\setminus\mcB_i)\twoheadrightarrow G$. 
For each $i=1,2$, let $\phi_i:X_i\to\PP^2$ be a $G$-cover branched at $\mcB_i$ induced by $\rho_i$, and
let $\mcC_i$ be a plane curve such that $\mcC_i\cap\mcB_i$ is finite. 
Assume that there exists a homeomorphism $h:\PP^2\to\PP^2$ such that $h(\mcB_1)=\mcB_2$, $h(\mcC_1)=\mcC_2$ and $\rho_2\circ h_{\ast}=\tau\circ\rho_1$ for some automorphism $\tau:G\to G$, where $h_{\ast}:\pi_1(\PP^2\setminus\mcB_1) \to \pi_1(\PP^2\setminus\mcB_2)$ is the isomorphism induced by $h$. 
Put $\mcG_i:=\mcG_{\phi_i,\mcC_i}$ and $\mcS_i:=\mcS_{\phi_i,\mcC_i}$. 
Then the following conditions hold:
\begin{enumerate}
\item $h$ induces an isomorphism $\theta_h:\mcG_{1}\to\mcG_{2}$ of the incidence graphs preserving the partitions, i.e., $\theta_{h,V}(V_{\mcG_1,j})=V_{\mcG_2,j}$ for $j=0,1$, where $(V_{\mcG_i,0},V_{\mcG_i,1})$ is the partition of $V_{\mcG_i}$ in Definition~\ref{def: splitting graph} (i); 
\item
the splitting graphs $\mcS_{1}$ and $\mcS_{2}$ are $(\theta_h,\tau)$-equivalent, \\ $\mcS_{1}\sim_{(\theta_h,\tau)}\mcS_{2}$.
\end{enumerate}
\end{thm}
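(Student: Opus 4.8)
The plan is to derive both statements from a single $\tau$-equivariant lift of $h$ to the unramified parts of the two covers. Write $U_i:=\PP^2\setminus\mcB_i$ and $\widetilde U_i:=X_i\setminus\phi_i^{-1}(\mcB_i)$; since $X_i$ is normal and irreducible, the restriction of $\phi_i$ is a connected Galois cover $\widetilde U_i\to U_i$ with group $G$, corresponding to $\ker\rho_i\leq\pi_1(U_i)$. For part (i), every datum of the incidence graph is a homeomorphism invariant of the pair $(\PP^2,\mcC_i)$ relative to $\mcB_i$: the homeomorphism $h$ carries $\Sing(\mcC_1)$ to $\Sing(\mcC_2)$ (the topological singular points of a plane curve are the algebraic ones), hence $\Sing(\mcC_1\setminus\mcB_1)$ to $\Sing(\mcC_2\setminus\mcB_2)$ because $h(\mcB_1)=\mcB_2$; it permutes irreducible components (the closures of the connected components of the smooth locus $\mcC_{i,\sm}$); and at each singular point it induces a bijection of local branches, preserving the relation $b\subset C$. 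Setting $\theta_{h,V}(v_P):=v_{h(P)}$, $\theta_{h,V}(v_C):=v_{h(C)}$ and $\theta_{h,E}(e_{P,b}):=e_{h(P),h(b)}$ yields the partition-preserving isomorphism $\theta_h:\mcG_1\to\mcG_2$.

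For part (ii) I first lift $h$. From $\rho_2\circ h_{\ast}=\tau\circ\rho_1$ with $\tau$ an automorphism, $h_{\ast}$ maps $\ker\rho_1$ isomorphically onto $\ker\rho_2$, so the standard lifting criterion produces a homeomorphism $\tilde h:\widetilde U_1\to\widetilde U_2$ with $\phi_2\circ\tilde h=h\circ\phi_1$ on $\widetilde U_1$. Using the monodromy description of $\rho_i$ recalled before the theorem, the identity $\rho_2\circ h_{\ast}=\tau\circ\rho_1$ translates into the $\tau$-equivariance $\tilde h\circ g=\tau(g)\circ\tilde h$ for all $g\in G$: both sides are lifts of $h\circ\phi_1$ that agree on the fibre over the base point, hence coincide by uniqueness of lifts. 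Note that the covering transformations $g:X_i\to X_i$ defining the $G$-action on $\mcS_i$ in Definition~\ref{def: splitting graph}~(ii-c) restrict to exactly these deck transformations of $\widetilde U_i$.

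The crux is to recover the vertices $\widetilde V_{\mcS_i,1}=\Irr(\phi_i^\ast\mcC_i)$ topologically from $\widetilde U_i$. Because $\mcC_i$ and $\mcB_i$ have no common component and no point of $\Sing(\mcC_i\setminus\mcB_i)$ is a branch point, $\phi_i$ is a local homeomorphism along $\phi_i^{-1}(\mcC_i)\cap\widetilde U_i$; consequently the singular points of $\phi_i^{-1}(\mcC_i)$ contained in $\widetilde U_i$ are precisely the points $\widetilde P$ over $\Sing(\mcC_i\setminus\mcB_i)$. After deleting these, each component $\widetilde C\in\Irr(\phi_i^\ast\mcC_i)$ still meets $\widetilde U_i$ in a connected set (an irreducible complex curve with finitely many points removed is connected), and distinct components become disjoint; thus the connected components of $(\phi_i^{-1}(\mcC_i)\cap\widetilde U_i)\setminus\{\widetilde P\}$ are exactly the traces of the $\widetilde C$. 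Since $\tilde h$ is a homeomorphism carrying $\phi_1^{-1}(\mcC_1)\cap\widetilde U_1$ onto $\phi_2^{-1}(\mcC_2)\cap\widetilde U_2$ and the $\widetilde P$'s of $\mcC_1$ onto those of $\mcC_2$, it induces a bijection $\Irr(\phi_1^\ast\mcC_1)\to\Irr(\phi_2^\ast\mcC_2)$. Together with $\tilde h$ on the $\widetilde P$'s this defines $\tilde\theta_V$, while the local bijection of branches at each $\widetilde P$ defines $\tilde\theta_E$, incidences being preserved because $\tilde h$ sends a branch into the unique component containing it.

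It remains to verify that $\tilde\theta=(\tilde\theta_V,\tilde\theta_E)$ satisfies the three conditions of Definition~\ref{def: equivalence of Galois covers}. Condition (i) descends from $\phi_2\circ\tilde h=h\circ\phi_1$ through Lemma~\ref{lem: splitting graph map}, giving $\phi_{\mcC_2}\circ\tilde\theta=\theta_h\circ\phi_{\mcC_1}$; conditions (ii) and (iii) are immediate from the $\tau$-equivariance $\tilde h\circ g=\tau(g)\circ\tilde h$, since the $G$-action on $\mcS_i$ is induced by those same deck transformations. I expect the main obstacle to lie in the second and third paragraphs together: producing the lift $\tilde h$ with the correct twist—so that the monodromy convention matches the direction of $\tau$—and confirming that a purely topological homeomorphism detects the algebraic components of $\phi_i^\ast\mcC_i$. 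The latter rests entirely on the observation that every singular point separating these components already lies in $\widetilde U_i$, so that deleting the fibre over $\Sing(\mcC_i\setminus\mcB_i)$ disconnects them.
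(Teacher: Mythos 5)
Your proposal is correct and follows essentially the same route as the paper: transport the incidence-graph data by $h$ for part (i), then lift $h$ to a homeomorphism $\tilde h:\widetilde U_1\to\widetilde U_2$ of the unramified parts (via the lifting criterion / uniqueness of the covers induced by $\rho_i$), establish the $\tau$-equivariance $\tilde h\circ g=\tau(g)\circ\tilde h$ through the monodromy description of $\rho_i$, and transport the splitting-graph data by $\tilde h$. Your third paragraph, recovering $\Irr(\phi_i^\ast\mcC_i)$ as closures of connected components of the punctured preimage curve inside $\widetilde U_i$, is a careful justification of a step the paper compresses into ``by the same argument of proof of (i)''; it is a welcome addition rather than a divergence.
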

\begin{proof}
We define $\theta_{h,V}:V_{\mcG_1}\to V_{\mcG_2}$ and $\theta_{h,E}:E_{\mcG_1}\to E_{\mcG_2}$ by 
\begin{align*}
\theta_{h,V}(v_{x_1})&:=v_{h(x_1)}, & \theta_{h,E}(e_{P_1,b_1})&:=e_{h(P_1),h(b_1)},
\end{align*}
respectively, for $x_1\in\Sing(\mcC_1\setminus\mcB_1)\cup\Irr(\mcC_1)$, $P_1\in\Sing(\mcC_1\setminus\mcB_1)$ and $b_1\in\LB_{P_1}(\mcC_1)$. 
Since $h(\mcB_1)=\mcB_2$ and $h(\mcC_1)=\mcC_2$, $\theta_{h,V}$ is well-defined, bijective and preserving the partitions. 
The map $\theta_{h,E}$ is well-defined and bijective since $b_1\in\LB_{P_1}(\mcC_1)$ if and only if $h(b_1)\in\LB_{h(P_1)}(\mcC_2)$. 
Moreover, 
we have $V_{\mcG_2}(\theta_{h,E}(e_{P_1,b_1}))=\theta_{h,V}(V_{\mcG_1}(e_{P_1,b_1}))$ 
 since $b_1\subset C_1$ if and only if $h(b_1)\subset h(C_1)$ for $C_1\in\Irr(\mcC_1)$. 
Therefore, $\theta_h=(\theta_{h,V},\theta_{h,E})$ is an isomorphism of the incidence graphs $\mcG_{i}$, and assertion (i) holds. 

We denote $\PP^2\setminus\mcB_i$ and $X_i\setminus\phi_i^{-1}(U_i)$ by $U_i$ and $\widetilde{U}_i$, respectively, for each $i=1,2$. 
By the uniqueness of unramified $G$-covers induced by $\rho_i$, there is a homeomorphism $\tilde{h}:\widetilde{U}_1\to\widetilde{U}_2$ satisfying $h|_{U_1}\circ\phi_1|_{\widetilde{U}_1}=\phi_2|_{\widetilde{U}_2}\circ\tilde{h}$. 
Hence we have the following commutative diagram: 
\[ \begin{diagram}
\node{\widetilde{U}_1} \arrow{e,t}{\tilde{h}} \arrow{s,l}{\phi_1|_{\widetilde{U}_1}}  \node{\widetilde{U}_2} \arrow{s,r}{\phi_2|_{\widetilde{U}_2}} \\
\node{U_1} \arrow{e,t}{h|_{U_1}} \node{U_2}
\end{diagram} \]
By the same argument of proof of (i), we have an isomorphism $\tilde{\theta}_{h}:\mcS_{1}\to\mcS_{2}$ of graphs. 
From the above diagram, it is easy to see that $\theta_{h,\bullet}\circ\phi_{\mcC_1,\bullet}=\phi_{\mcC_2,\bullet}\circ\tilde{\theta}_{h,\bullet}$ for each $\bullet=V, E$. 
Thus condition (i) in Definition~\ref{def: equivalence of Galois covers} holds. 

Take a point $\widetilde{P}_1\in\widetilde{U}_1$. 
Let $\gamma:[0,1]\to U_1$ be a closed path with $\gamma(0)=\phi_1(\widetilde{P}_1)$, and let $\tilde{\gamma}:[0,1]\to\widetilde{U}_1$ be the lift of $\gamma$ satisfying $\tilde{\gamma}(0)=\widetilde{P}_1$. 
Then $g_{\gamma}{\cdot}\widetilde{P}_1=\widetilde{Q}_1:=\tilde{\gamma}(1)$, where $g_{\gamma}=\rho_1([\gamma])$. 
Note that $h_\ast([\gamma])$ is the element of $\pi_1(U_2)$ represented by the path $h\circ\gamma:[0,1]\to U_2$, and 
that $\tilde{h}\circ\tilde{\gamma}:[0,1]\to \widetilde{U}_2$ is the lift of $h\circ\gamma$ whose initial point is $\widetilde{P}_2:=\tilde{h}(\widetilde{P}_1)$, i.e., $\tilde{h}\circ\tilde{\gamma}(0)=\widetilde{P}_2$. 
Hence we have 
$g_{h\circ\gamma}{\cdot}\widetilde{P}_2=\tilde{h}(\widetilde{Q}_1)$, 
where $g_{h\circ\gamma}=\rho_2([h\circ\gamma])$. 
Since $\rho_2\circ h_{\ast}=\tau\circ\rho_1$ by the assumption, we have
\[ g_{h\circ\gamma}=\rho_2\circ h_\ast([\gamma])=\tau\circ\rho_1([\gamma])=\tau(g_{\gamma}). \]
Hence we obtain 
\[ \tilde{h}(g_{\gamma}{\cdot}\widetilde{P}_1)=\tilde{h}(\widetilde{Q}_1)=g_{h\circ\gamma}\cdot\widetilde{P}_2=\tau(g_{\gamma}){\cdot}\tilde{h}(\widetilde{P}_1) \]
for any $\widetilde{P}_1\in\widetilde{U}_1$ and any closed path $\gamma$ with $\gamma(0)=\phi_1(\widetilde{P}_1)$. 
By condition (ii-c) in Definition~\ref{def: splitting graph}, we have $\tilde{\theta}_{h,V}(g{\cdot}\tilde{v})=\tau(g){\cdot}\tilde{\theta}_{h,V}(\tilde{v})$ and $\tilde{\theta}_{h,E}(g{\cdot}\tilde{e})=\tau(g){\cdot}\tilde{\theta}_{h,E}(\tilde{e})$ for any $\tilde{v}\in V_{\mcS_{1}}$, $\tilde{e}\in E_{\mcS_1}$ and $g\in G$. 
Therefore we conclude that $\mcS_{1}\sim_{(\theta_h,\tau)}\mcS_{2}$. 
\end{proof}

\begin{rem}\label{rem: fundamental group}
Let $\mcC_i+\mcB_i$ ($i=1,2$) be two plane curves, let $\phi_i:X_i\to\PP^2$ be $G$-covers, and put $\mcG_i:=\mcG_{\phi_i,\mcC_i}$ and $\mcS_i:=\mcS_{\phi_i,\mcC_i}$ as in Theorem~\ref{thm: splitting graph}. 
\begin{enumerate}
\item
If there exist tubular neighborhoods $\mcT(\mcC_i+\mcB_i)\subset\PP^2$ of $\mcC_i+\mcB_i$ ($i=1,2$) such that there is a homeomorphism $h':\mcT(\mcC_1+\mcB_1)\to\mcT(\mcC_2+\mcB_2)$ with $h'(\mcC_1)=\mcC_2$ and $h'(\mcB_1)=\mcB_2$, 
then $h'$ induces an isomorphism $\theta_{h'}:\mcG_{1}\to\mcG_{2}$ preserving the partitions as the proof of Theorem~\ref{thm: splitting graph} (i). 
\item
In the case where $\mcC_i$ are irreducible, the cardinality of $\widetilde{V}_{\mcS_{i},1}\subset V_{\mcS_{i}}$ is equal to the splitting number $s_{\phi_i}(\mcC_i)$, which is the number of irreducible components of $\phi^\ast\mcC_i$. 
Hence, if $s_{\phi_1}(\mcC_1)\ne s_{\phi_2}(\mcC_2)$, then there is no isomorphism $\theta:\mcG_{1}\to\mcG_{2}$ preserving the partitions such that $\mcS_{1}\sim_{(\theta,\tau)}\mcS_{2}$ for any automorphism $\tau:G\to G$. 
Since the splitting number is not determined by the fundamental group by \cite{shirane}, the splitting graph is not determined by the one. 
\item 
It is easy to see that the number of connected components of $\mcS_{i}$ is equal to the connected number $c_{\phi_i}(\mcC_i)$ (cf. \cite{shirane2}). 
\end{enumerate}

\end{rem}

In order to restrict the possibility of $\tau:G\to G$ in Theorem~\ref{thm: splitting graph}, we discuss the image of a meridian of an irreducible component of a plane curve $\mcB$ under a homeomorphism $h:\PP^2\to\PP^2$. 
Let $P\in \mcB$ be a smooth point of $\mcB$. 
Take an open neighborhood $U\subset\PP^2$ of $P$ and a system of local coordinates $(x,y)$ of $U$ with $P=(0,0)$ so that $\mcB$ is defined by $x=0$. 
Let $\delta_{\epsilon}$ be the closed path $[0,1]\to U$ defined by $t\mapsto (\epsilon\exp(2\pi\sqrt{-1}\,t), 0)$ for a small number $\epsilon>0$. 
We call $m:=p\delta_{\epsilon}p^{-1}$ a \textit{meridian} of $\mcB$ at $P$, where $p:[0,1]\to\PP^2$ is a path from the base point $\ast\in\PP^2\setminus\mcB$ to the point $(\epsilon,0)\in U$. 
By abuse of notation, the class $[m]\in\pi_1(\PP^2\setminus\mcB)$ be also called the \textit{meridian} of $\mcB$ at $P$. 
Note that meridians $m$ and $m'$ at $P$ and $P'$, respectively, are homotopically equivalent in $\PP^2\setminus\mcB$ if $P$ and $P'$ are contained in an irreducible component of $\mcB$. 
The next lemma is effective to restrict the possibility of a automorphism $\tau:G\to G$ in Theorem~\ref{thm: splitting graph}. 

\begin{lem}\label{lem: meridian}
Let $\mcB_1, \mcB_2$ be two plane curves, 
and let $P_1\in B_1$ be a smooth point of $\mcB_1$. 
Assume that there is a homeomorphism $h:\PP^2\to\PP^2$ such that $h(\mcB_1)=\mcB_2$. 
Put $P_2:=h(P_1)$. 
Let $m_i:[0,1]\to\PP^2$ be a meridian of $\mcB_i$ at $P_i$ for each $i=1,2$. 
Then the closed path $h\circ m_1$ is homotpically equivalent to either a certain meridian of $\mcB_2$ at $P_2$ or its inverse in $\PP^2\setminus\mcB_2$. 
Equivalently, the class $[h\circ m_1]$ is a conjugate of either $[m_2]$ or $[m_2]^{-1}$ in $\pi_1(\PP^2\setminus\mcB_2)$ for any meridian $m_2$ of $\mcB_2$ at $P_2$. 
\end{lem}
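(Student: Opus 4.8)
The plan is to reduce this global statement about $\pi_1(\PP^2\setminus\mcB_2)$ to a local computation of the fundamental group of the complement of a smooth curve at a smooth point, and then to transport the generator by $h$. The underlying principle is that, up to homotopy and inversion, a meridian is precisely a generator of the infinite cyclic local fundamental group, and any homeomorphism of pairs must carry generators to generators.

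First I would set up the local model. Since $P_1$ is a smooth point of $\mcB_1$ and $\PP^2$ is a complex surface, the local coordinates $(x,y)$ of the definition give a closed neighborhood $N_1:=\{|x|\le\epsilon,\ |y|\le\epsilon\}$ of $P_1$ with $N_1\cap\mcB_1=\{x=0\}\cap N_1$, so that the pair $(N_1,N_1\cap\mcB_1)$ is the standard ball--disk pair. Then $N_1\setminus\mcB_1=\{0<|x|\le\epsilon\}\times\{|y|\le\epsilon\}$ deformation retracts onto the circle $\{|x|=\epsilon,\ y=0\}$ traced by $\delta_\epsilon$; hence $\pi_1(N_1\setminus\mcB_1)$ is infinite cyclic and the loop $\delta_\epsilon$ underlying $m_1$ is a free generator.

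Next I would transport this picture by $h$. As $h$ is a homeomorphism with $h(\mcB_1)=\mcB_2$, it restricts to a homeomorphism of pairs $(N_1,N_1\cap\mcB_1)\to(h(N_1),h(N_1)\cap\mcB_2)$ taking $P_1$ to $P_2$; consequently $\pi_1(h(N_1)\setminus\mcB_2)$ is infinite cyclic with free generator $h\circ\delta_\epsilon$. To compare with the standard meridian at $P_2$, I would pick a standard neighborhood $N_2$ of $P_2$ with $N_2\subset\mathrm{int}(h(N_1))$, so that the small loop $\delta_\epsilon'$ underlying a chosen meridian $m_2$ generates $\pi_1(N_2\setminus\mcB_2)\cong\ZZ$. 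The inclusion $N_2\setminus\mcB_2\hookrightarrow h(N_1)\setminus\mcB_2$ sends $\delta_\epsilon'$ to a small meridian of $\mcB_2$ at $P_2$, which by the same local computation is a generator of the infinite cyclic group $\pi_1(h(N_1)\setminus\mcB_2)$; a homomorphism $\ZZ\to\ZZ$ carrying a generator to a generator is an isomorphism, and since this group is abelian the free-homotopy class of $h\circ\delta_\epsilon$ equals $[\delta_\epsilon']^{\pm1}$ in $h(N_1)\setminus\mcB_2$, hence in $\PP^2\setminus\mcB_2$.

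Finally I would reinstate the connecting paths. Applying $h$ to the path $p_1$ in $m_1=p_1\delta_\epsilon p_1^{-1}$ yields a path from $h(\ast)$ to the basepoint of $h\circ\delta_\epsilon$; conjugating the free-homotopy relation $h\circ\delta_\epsilon\simeq(\delta_\epsilon')^{\pm1}$ by this path, by $p_2$, and by a path joining $h(\ast)$ to $\ast$, and using that any two meridians of $\mcB_2$ at $P_2$ differ by a conjugation (the choice of connecting path), upgrades the free-homotopy statement to the asserted conjugacy $[h\circ m_1]=g\,[m_2]^{\pm1}\,g^{-1}$ for some $g\in\pi_1(\PP^2\setminus\mcB_2)$. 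The main obstacle is the exponent $\pm1$: it cannot be removed, because the normal orientation of $\mcB_2$ that distinguishes $[m_2]$ from $[m_2]^{-1}$ is supplied by the complex structure, which $h$ need not preserve, so an orientation-reversing $h$ genuinely sends a meridian to its inverse. Accordingly the comparison of the two local infinite cyclic groups must be carried out at the level of \emph{unoriented} free homotopy, the sign being exactly the ambiguity in the choice of generator of $\ZZ$.
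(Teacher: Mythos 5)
Your proof has the right skeleton (local model at a smooth point, transport by $h$, nested neighborhoods, then conjugation by connecting paths), but it asserts rather than proves the one step that carries the entire content of the lemma. You claim that the inclusion $N_2\setminus\mcB_2\hookrightarrow h(N_1)\setminus\mcB_2$ sends $[\delta_\epsilon']$ to a generator of $\pi_1(h(N_1)\setminus\mcB_2)$ ``by the same local computation.'' That computation (a deformation retraction in coordinates) is available only in a neighborhood that is \emph{standard} for $\mcB_2$; the set $h(N_1)$ is merely the homeomorphic image of a coordinate ball, possibly very distorted, so what transport by $h$ gives you is a retraction of $h(N_1)\setminus\mcB_2$ onto the circle $h\circ\delta_\epsilon$, not onto $\delta_\epsilon'$. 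A priori the inclusion-induced map $\ZZ\to\ZZ$ could be multiplication by any integer: all you know at this point is $[\delta_\epsilon']=[h\circ\delta_\epsilon]^{n}$ for some $n\in\ZZ$, and nothing in your argument forces $n=\pm1$. Since this comparison of the two local infinite cyclic groups is exactly what the lemma amounts to, this is a genuine gap.

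It is, however, repairable by precisely the device the paper uses: a three-layer sandwich in which the two outer layers are standard. The paper chooses $D_{2,\epsilon_2'}\subset h(D_{1,\epsilon_1})\subset D_{2,\epsilon_2}$, observes that the inclusion between the two standard layers induces an isomorphism of their (infinite cyclic) fundamental groups of complements, and that this isomorphism factors through $\pi_1(h(D_{1,\epsilon_1})\setminus\mcB_2)$; the resulting surjectivity of the outer map forces the generator of the middle group, namely $[h\circ\delta_{\epsilon_1}]$, to map to a generator, i.e.\ to $[\delta_{\epsilon_2}]^{\pm1}$. In your setup the symmetric fix works: inside $N_1$ choose a smaller neighborhood $N_1'$, standard for the coordinates at $P_1$, with $h(N_1')\subset\mathrm{int}(N_2)$. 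The inclusion $N_1'\setminus\mcB_1\hookrightarrow N_1\setminus\mcB_1$ is a homotopy equivalence, so applying $h$ shows that the composite $\pi_1(h(N_1')\setminus\mcB_2)\to\pi_1(N_2\setminus\mcB_2)\to\pi_1(h(N_1)\setminus\mcB_2)$ is an isomorphism; hence the second map is surjective, and a surjection $\ZZ\to\ZZ$ sends a generator to a generator, giving $[\delta_\epsilon']=[h\circ\delta_\epsilon]^{\pm1}$ (equivalently $nm=1$ in the notation above). With that inserted, the remainder of your argument---reinstating the connecting paths, the conjugacy statement, and the unavoidable sign $\pm1$ coming from orientation---is correct and agrees with the paper.
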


\begin{proof}
Fix base points $\ast_1\in\PP^2\setminus\mcB_1$ and $\ast_2:=h(\ast_1)$ of $\PP^2\setminus\mcB_1$ and $\PP^2\setminus\mcB_2$, respectively. 
For each $i=1,2$, let $U_i\subset\PP^2$ be an open neighborhood of $P_i$, and 
let $(x_i,y_i)$ be a system of local coordinates of $U_i$ with $P_i=(0,0)$ so that $\mcB_i$ is defined by $x_i=0$. 
Let $\epsilon_2>0$ be a small number such that $m_2=p_2\delta_{\epsilon_2}p_2^{-1}$, where $\delta_{\epsilon_2}$ is the closed path in $U_2$, and $p_2$ is a path from $\ast_2$ to $(\epsilon_2,0)\in U_2$ in $\PP^2\setminus\mcB_2$. 
For a positive number $\epsilon>0$, put 
\[ D_{i,\epsilon}:=\{(x_i,y_i)\mid |x_i|^2+|y_i|^2\leq\epsilon^2\}\subset U_i. \]
Since the preimage $h^{-1}(\mathrm{Int}(D_{2,\epsilon_2}))$ of the interior $\mathrm{Int}(D_{2,\epsilon_2})$ of $D_{2,\epsilon_2}$ is open in $U_1$, 
there exists a positive number $\epsilon_1>0$ such that $D_{1,\epsilon_1}\subset h^{-1}(\mathrm{Int}(D_{2,\epsilon_2}))$. 
We may assume that $\delta_{\epsilon_1}$ with respect to $(x_1,y_1)$ satisfies that $m_1=p_1 \delta_{\epsilon_1} p_1^{-1}$, where $p_1$ is a path from the base point $\ast_1$ to $(\epsilon_1,0)\in U_1$ in $\PP^2\setminus\mcB_1$. 
Since $h(\mathrm{Int}(D_{1,\epsilon_1}))$ is open in $U_2$, there exists a positive number $\epsilon_2'<\epsilon_2$ such that $D_{2,\epsilon_2'}\subset h(\mathrm{Int}(D_{1,\epsilon_1}))$. 
Since $h(\mcB_1)=\mcB_2$, the inclusions $D_{2,\epsilon_2'}\to D_{2,\epsilon_2}$, $D_{2,\epsilon_2'}\to h(D_{1,\epsilon_1})$ and $h(D_{1,\epsilon_1})\to D_{2,\epsilon_2}$ induce the morphisms 
\begin{align*}
i_{1\ast}&:\pi_1(D_{2,\epsilon_2'}\setminus\mcB_2)\to\pi_1(D_{2,\epsilon_2}\setminus\mcB_2), \\
i_{2\ast}&:\pi_1(D_{2,\epsilon_2'}\setminus\mcB_2)\to\pi_1(h(D_{1,\epsilon_1}\setminus\mcB_1)) \ \mbox{ and } \\
i_{3\ast}&:\pi_1(h(D_{1,\epsilon_1}\setminus\mcB_1))\to \pi_1(D_{2,\epsilon_2}\setminus\mcB_2), 
\end{align*}
respectively. 
Note that we have 
\[
\pi_1(D_{2,\epsilon_2'}\setminus\mcB_2)\cong\pi_1(D_{2,\epsilon_2}\setminus\mcB_2)\cong\pi_1(D_{1,\epsilon_1}\setminus\mcB_1)\cong\ZZ.
\]
Since $i_{1\ast}=i_{3\ast}\circ i_{2\ast}$ and $i_{1\ast}$ is isomorphic, 
the composition of $h_\ast$ and $i_{3\ast}$ maps a generator of $\pi_1(D_{1,\epsilon_1}\setminus\mcB_1)$ to a generator of $\pi_1(D_{2,\epsilon_2}\setminus\mcB_2)$. 
Thus $h\circ\delta_{\epsilon_1}$ is homotopically equivalent to either $\delta_{\epsilon_2}$ or $\delta_{\epsilon_2}^{-1}$ in $\PP^2\setminus\mcB_2$. 
This implies that $h_\ast([m_1])$ is a conjugate of either $[m_2]$ or $[m_2]^{-1}$.  
\end{proof}

\begin{rem}\label{rem: smooth curve}
If $\mcB_1$ and $\mcB_2$ are smooth plane curves of degree $d$, then $\pi_1(\PP^2\setminus\mcB_i)\cong\ZZ_d$. 
For a meridian $m_{\mcB_i}$ of $\mcB_i$, we assume that the class $[m_{\mcB_i}]\in\pi_1(\PP^2\setminus\mcB_i)$ corresponds to the generator $[1]\in\ZZ_d$. 
By Lemma~\ref{lem: meridian}, a homeomorphism $h:\PP^2\to\PP^2$ with $h(\mcB_1)=\mcB_2$ induces either $\tau_d^+:\ZZ_d\to\ZZ_d$ or $\tau_d^-:\ZZ_d\to\ZZ_d$, 
where the automorphisms $\tau_d^\pm$ are defined by $\tau_d^\pm([1])=[\pm 1]$, respectively. 
\end{rem}

We define an equivalence between splitting graphs as follows. 

\begin{defin}\label{def: equivalence of splitting graph}
Let $G$ be a finite group, and let $\mcB_i$ ($i=1,2$) be two plane curves such that there are surjections $\rho_i:\pi_1(\PP^2\setminus\mcB_i)\to G$. 
Let $\phi_i:X_i\to\PP^2$ be a $G$-cover induced by $\rho_i$, and 
let $\mcC_i$ be a plane curve such that $\mcC_i\cap\mcB_i$ is finite for each $i=1,2$. 
The splitting graphs $\mcS_{\phi_1,\mcC_1}$ and $\mcS_{\phi_2,\mcC_2}$ are said to be \textit{equivalent}, denoted by $\mcS_{\phi_1,\mcC_1}\sim\mcS_{\phi_2,\mcC_2}$, 
if there exist a homeomorphism $h':\mcT(\mcC_1+\mcB_1)\to\mcT(\mcC_2+\mcB_2)$ of tubular neighborhoods $\mcT(\mcC_i+\mcB_i)\subset\PP^2$ of $\mcC_i+\mcB_i$ and an automorphism $\tau:G\to G$ satisfying 
\begin{enumerate}
\item $h'(\mcC_1)=\mcC_2$ and $h'(\mcB_1)=\mcB_2$; 
\item for any meridian $m_B$ of any irreducible component $B\subset\mcB_1$, either $\tau(\rho_1([m_B]))=\rho_2([m_{h'(B)}])$ or $\tau(\rho_1([m_B]))=\rho_2([m_{h'(B)}]^{-1})$ for some meridian $m_{h'(B)}$ of $h'(B)$; and
\item $\mcS_{\phi_1,\mcC_1}\sim_{(\theta_{h'},\tau)}\mcS_{\phi_2,\mcC_2}$ as $G$-covers of graphs, where $\theta_{h'}:\mcG_{\phi_1,\mcC_1}\to\mcG_{\phi_2,\mcC_2}$ is the isomorphism in Remark~\ref{rem: fundamental group} (i). 
\end{enumerate}
\end{defin}

\begin{rem}
A homeomorphism $h':\mcT(\mcC_1+\mcB_1)\to\mcT(\mcC_2+\mcB_2)$ in Definition~\ref{def: equivalence of splitting graph} gives a correspondence between the combinatorial data of $\mcC_1+\mcB_1$ and $\mcC_2+\mcB_2$, which consist of the sets of irreducible components, singularities, degrees of components, and configuration of components of $\mcC_i+\mcB_i$. 
Conversely, it is known that a correspondence between the combinatorial data induces a homeomorphism $h'$ between tubular neighborhoods (cf. \cite[Remark~3]{acc}). 
\end{rem}

By Theorem~\ref{thm: splitting graph} and Lemma~\ref{lem: meridian}, we obtain the following lemma. 

\begin{cor}\label{cor: splitting graph}
Under the assumption of Theorem~\ref{thm: splitting graph}, 
the splitting graphs $\mcS_{\phi_1,\mcC_1}$ and $\mcS_{\phi_2,\mcC_2}$ are equivalent. 
\end{cor}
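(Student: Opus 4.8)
The plan is to manufacture, directly from the homeomorphism $h:\PP^2\to\PP^2$ and the automorphism $\tau:G\to G$ provided by the hypotheses of Theorem~\ref{thm: splitting graph}, the data $(h',\tau)$ required by Definition~\ref{def: equivalence of splitting graph}, and then to verify the three conditions (i)--(iii) of that definition one at a time. I keep the same automorphism $\tau$. For $h'$ I would fix a tubular neighborhood $\mcT(\mcC_1+\mcB_1)\subset\PP^2$ of $\mcC_1+\mcB_1$, set $\mcT(\mcC_2+\mcB_2):=h(\mcT(\mcC_1+\mcB_1))$, which is a tubular neighborhood of $\mcC_2+\mcB_2$ because $h(\mcC_1+\mcB_1)=\mcC_2+\mcB_2$, and put $h':=h|_{\mcT(\mcC_1+\mcB_1)}$. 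Then condition (i) of Definition~\ref{def: equivalence of splitting graph}, namely $h'(\mcC_1)=\mcC_2$ and $h'(\mcB_1)=\mcB_2$, is immediate from the hypotheses of Theorem~\ref{thm: splitting graph}.

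For condition (iii) I would observe that the isomorphism $\theta_{h'}:\mcG_{\phi_1,\mcC_1}\to\mcG_{\phi_2,\mcC_2}$ of Remark~\ref{rem: fundamental group}~(i) coincides with the isomorphism $\theta_h$ of Theorem~\ref{thm: splitting graph}~(i). Indeed both are given on vertices and edges by $v_{x_1}\mapsto v_{h(x_1)}$ and $e_{P_1,b_1}\mapsto e_{h(P_1),h(b_1)}$, and $h'$ agrees with $h$ on $\mcT(\mcC_1+\mcB_1)$, which contains every singular point in $\Sing(\mcC_1\setminus\mcB_1)$ and every local branch entering the construction of $\mcG_{\phi_1,\mcC_1}$. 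Hence $\theta_{h'}=\theta_h$, and condition (iii), $\mcS_{\phi_1,\mcC_1}\sim_{(\theta_{h'},\tau)}\mcS_{\phi_2,\mcC_2}$, is exactly conclusion (ii) of Theorem~\ref{thm: splitting graph}.

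The genuine content lies in condition (ii), the compatibility on meridians, and here I would combine Lemma~\ref{lem: meridian} with the relation $\rho_2\circ h_\ast=\tau\circ\rho_1$. Fix an irreducible component $B\subset\mcB_1$ and a meridian $m_B$. The hypothesis gives $\tau(\rho_1([m_B]))=\rho_2(h_\ast([m_B]))$. By Lemma~\ref{lem: meridian}, $h_\ast([m_B])=[h\circ m_B]$ is a conjugate of either $[m_{h(B)}]$ or $[m_{h(B)}]^{-1}$ in $\pi_1(\PP^2\setminus\mcB_2)$, where $h(B)=h'(B)$. The key point I would invoke is that any conjugate of a meridian of $h'(B)$ is again a meridian of $h'(B)$ (realized by altering the connecting path), so $h_\ast([m_B])$ is itself a meridian $m_{h'(B)}$ of $h'(B)$ in the first case, and the inverse $[m_{h'(B)}]^{-1}$ of such a meridian in the second case. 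Substituting back into the displayed equality yields precisely $\tau(\rho_1([m_B]))=\rho_2([m_{h'(B)}])$ or $\tau(\rho_1([m_B]))=\rho_2([m_{h'(B)}]^{-1})$, as demanded; since Lemma~\ref{lem: meridian} applies to an arbitrary meridian, this holds for every $m_B$.

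The step I expect to be the main obstacle is exactly this condition (ii): one must upgrade the ``up to conjugacy and inversion'' conclusion of Lemma~\ref{lem: meridian} to the \emph{exact} equality of elements of $G$ required by Definition~\ref{def: equivalence of splitting graph}~(ii). This hinges on the observation that the set of meridians of a fixed irreducible component forms a single conjugacy class in the fundamental group, so that the freedom ``for some meridian $m_{h'(B)}$'' in the definition absorbs the indeterminacy coming from Lemma~\ref{lem: meridian}; conditions (i) and (iii), by contrast, are formal consequences of Theorem~\ref{thm: splitting graph}.
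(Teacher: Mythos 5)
Your proposal is correct and follows essentially the same route as the paper, whose proof is precisely the one-line deduction ``by Theorem~\ref{thm: splitting graph} and Lemma~\ref{lem: meridian}'': condition (i) of Definition~\ref{def: equivalence of splitting graph} from the hypotheses, condition (iii) from Theorem~\ref{thm: splitting graph}~(ii) via $\theta_{h'}=\theta_h$, and condition (ii) from Lemma~\ref{lem: meridian}. The only detail you add beyond what the paper leaves implicit is the (correct) observation that conjugates of a meridian of an irreducible component are again meridians of that component, which upgrades the conjugacy statement of Lemma~\ref{lem: meridian} to the exact equality required in Definition~\ref{def: equivalence of splitting graph}~(ii).
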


\begin{ex}
Let $\mcB$ be the conic defined by $z^2-4x y=0$, and 
let $\mcC_1$ and $\mcC_2$ be two $6$-nodal irreducible sextics in \cite[Example~6.2]{bannai-shirane} and \cite[Example~6.3]{bannai-shirane}, respectively. 
Note that $\mcB$ is a simple contact conic of both of $\mcC_1$ and $\mcC_2$. 
Let $\phi:X\to\PP^2$ be the double cover branched at the conic $\mcB$. 
Then $\mcC_1$ is a splitting curve with respect to $\phi$, write $\phi^\ast\mcC_1=\widetilde{\mcC}_1^++\widetilde{\mcC}_1^-$, and $\phi^{-1}(\Sing(\mcC_1\setminus\mcB))=(\widetilde{\mcC}_1^+\cap\widetilde{\mcC}_1^-)\setminus\phi^{-1}(\mcB)$. 
On the other hand, $\widetilde{\mcC}_2:=\phi^\ast\mcC_2$ is irreducible. 
Thus the preimage of the $6$ nodes of $\mcC_2$ are the $12$ nodes of $\widetilde{\mcC}_2$. 
Hence the splitting graphs $\phi_{\mcC_i}:\mcS_{\phi,\mcC_i}\to\mcG_{\phi,\mcC_i}$ are as Figure~\ref{fig: example1} and \ref{fig: example2}, respectively, since a node consists of two local branches, 
where $v_{ij}$ are vertices corresponding to the $6$ nodes of $\mcC_i$, and $\phi_{\mcC_i}^{-1}(v_{ij})=\{v_{ij}^+,v_{ij}^-\}$. 
Hence $\mcS_{\phi_1,\mcC_1}$ and $\mcS_{\phi_2,\mcC_2}$ are not equivalent. 

\begin{figure}
\begin{center}
\input{example1}
\caption{The splitting graph of $\mcC_1$ for $\phi$}
\label{fig: example1}
\end{center}
\end{figure}

\begin{figure}
\begin{center}
\input{example2}
\caption{The splitting graph of $\mcC_2$ for $\phi$}
\label{fig: example2}
\end{center}
\end{figure}

\end{ex}

\subsection{Net voltage classes of Splitting graphs for cyclic covers}

Proposition~\ref{prop: gap class} and Theorem~\ref{thm: splitting graph} imply that computing net voltage classes of closed walks is effective to distinguish the embedded topology of plane curves. 
We investigate a computation of net voltage classes for cyclic covers. 
Let $\overline{\mcB}$ and $\mcB$ be divisors 
\[ \overline{\mcB}=\sum_{i=1}^{m-1} i{\cdot}B_i \ \mbox{ and } \ \mcB=\sum_{i=1}^{m-1}B_i \]
on $\PP^2$, respectively, 
where $B_i$ ($i=1,\dots,m-1$) are reduced divisors with no common components each other. 
Note that the degree of $\overline{\mcB}$ is divisible by $m$ 
if and only if 
there exists a surjection $\rho:\pi_1(\PP^2\setminus\mcB)\twoheadrightarrow\ZZ_m:=\ZZ/m\ZZ$ which sends any meridian of $\mcB$ at any $P_i\in B_i\setminus\Sing(\mcB)$ to the image $[i]\in\ZZ_m$ of $i$ in $\ZZ_m$ (cf. \cite{pardini}). 
Assume that the degree of $\overline{\mcB}$ is divisible by $m$. 
We call the cyclic cover induced by the surjection $\rho:\pi_1(\PP^2\setminus\mcB)\twoheadrightarrow\ZZ_m$ as above the \textit{$\ZZ_m$-cover of type $\overline{\mcB}$}. 

Let $\phi:X\to\PP^2$ be the $\ZZ_m$-cover of type $\overline{\mcB}$, and 
let $\mcC\subset\PP^2$ be a plane curve such that $\mcC\cap\mcB$ is finite. 
We assume the following condition to compute the net voltage class of a closed walk for $\phi$: 
\begin{align}\label{eq: assumption}
\mbox{All irreducible components of $\mcC$ are smooth.} 
\end{align}

\begin{rem}
Under assumption~(\ref{eq: assumption}), the incidence graph $\mcG_{\phi,\mcC}$ has no parallel edge. 
Hence an edge of $\mcG_{\phi,\mcC}$ is identified with a pair $(v_P,v_C)$ of vertices $v_P\in V_{\mcG_{\phi,\mcC},0}$ and $v_C\in V_{\mcG_{\phi,\mcC},1}$. 
In this case, we omit edges from sequences representing walks on $\mcG_{\phi,\mcC}$. 
Namely, we represent walks by sequences of vertices only. 
\end{rem}

Let $L\subset\PP^2$ be a line which intersects transversally with $\mcC$, and is not a component of $\mcB$. 
Since $L$ does not pass through singularities of $\mcC$, it is enough to consider the singular points and irreducible components of $\mcC$ and $\phi^{\ast}\mcC$ over the affine open set $U':=\PP^2\setminus L$ for computing the net voltage class. 
Hence we consider the restriction $\phi':\widetilde{U}'\to U'$ of $\phi$ to $\widetilde{U}':=X\setminus\phi^{-1}(L)$. 
We regard the coordinate ring of $U'$ as the polynomial ring $\CC[x,y]$. 
Let $F=0$ be a defining equation of $\overline{\mcB}$ on $U'$. 
By $L\not\subset\mcB$ and the proof of \cite[Theorem~2.1]{benoit-shirane} (cf. \cite[Theorem~2.7]{shirane}), 
if $C\in\Irr(\mcC)$ is defined by $f=0$ on $U'$, and if the splitting number of $C$ for $\phi$ is $s$, 
then there are two polynomials $g,h\in\CC[x,y]$ satisfying the following equation: 
\[
 F=fg+h^s. 
\]

Let $\gamma$ be the following closed walk on the incidence graph $\mcG_{\mcC}$: 
\[ 
\gamma=(v_{P_1},v_{C_1},v_{P_2},v_{C_2},\dots,v_{P_{n}},v_{C_n},v_{P_{n+1}}), 
\]
where $P_i\in\Sing(\mcC\setminus\mcB)$ with $P_{n+1}=P_1$ and $C_i\in\Irr(\mcC)$. 
We fix a defining equation $f_i=0$ of $C_i$ and polynomials $g_i,h_i\in\CC[x,y]$ satisfying 
\begin{align}\label{eq: g and h}
F=f_ig_i+h_i^{s_i} 
\end{align}
for each $i=1,\dots,n$, where $s_i$ is the splitting number of $C_i$ for $\phi$. 
Since $P_{i+1}$ is an intersection of $C_{i}$ and $C_{i+1}$ for $i=1,\dots,n$, 
we have $F(P_{i+1})=h_{i}^{s_{i}}(P_{i+1})=h_{i+1}^{s_{i+1}}(P_{i+1})$, where $C_{n+1}:=C_1$. 
For each $i=1,\dots, n$, we fix a complex number $d_i\in\CC$ such that 
\[
h_i(P_i)=d_i^{\mu_i}, 
\]
where $\mu_i:=m/s_i$. 
Since $h_{i}^{s_{i}}(P_{i+1})=h_{i+1}^{s_{i+1}}(P_{i+1})$, there is an integer $\alpha_i$ with $0\leq \alpha_i<s_{i}$ such that 
\begin{align}\label{eq: integer}
h_{i}(P_{i+1})=(\zeta_m^{\mu_{i}})^{\alpha_i}d_{i+1}^{\mu_{i}} 
\end{align}
for each $i=1,\dots,n$, where $\zeta_m:=\exp(2\pi\sqrt{-1}/m)$ and $d_{n+1}=d_1$. 
We put $\alpha:=\sum_{i=1}^n\alpha_i$. 

\begin{thm}\label{thm: gap class}
Under the above circumstance, the following equation holds:
\[ \NV_\phi(\gamma)=[\alpha]+s \ZZ_m:=\{ [\alpha+sk]\in\ZZ_m\mid k\in\ZZ \}. \]
\end{thm}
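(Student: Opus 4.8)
The plan is to realize the cyclic cover concretely and then track, fiber by fiber, the set of points reachable by lifting the walk, encoding each fiber point by the value of the coordinate in a standard equation of the cover. First I would describe $\phi$ over the affine piece $U'=\PP^2\setminus L$ as the normalization of $w^m=F$, with the generator $[1]\in\ZZ_m$ acting by $w\mapsto\zeta_m w$; since $L$ is transverse to $\mcC$ and misses $\Sing(\mcC)$, every singular point and component relevant to $\gamma$ lies over $U'$, so it suffices to compute there. Over a point $Q\notin\mcB$ the fiber consists of the $m$ values $w$ with $w^m=F(Q)$, and $[t]\cdot\widetilde Q$ multiplies $w(\widetilde Q)$ by $\zeta_m^{\,t}$. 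The next step reads off the components of $\phi^\ast C_i$ from \eqref{eq: g and h}: on $\phi^\ast C_i$ one has $w^m=h_i^{s_i}$, hence $(w^{\mu_i}/h_i)^{s_i}=1$, so $w^{\mu_i}/h_i$ is a locally constant $s_i$-th root of unity; the $s_i$ components $\widetilde C_i^{(j)}$ are exactly the loci $w^{\mu_i}/h_i=\zeta_{s_i}^{\,j}$ ($j\in\ZZ/s_i$), using $\zeta_m^{\mu_i}=\zeta_{s_i}$, and each covers $C_i$ with degree $\mu_i$.

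With this dictionary I would fix the base vertex $\tilde v_0=v_{\widetilde P_1}$ with $w(\widetilde P_1)=d_1$ (legitimate since $F(P_1)=h_1(P_1)^{s_1}=d_1^{\,m}$), and define $\beta_i\subset\ZZ_m$ to be the set of \emph{offsets} of the reachable fiber points over $P_i$, i.e.\ the $\beta$ with $w=d_i\zeta_m^{\,\beta}$ attained by some $\tilde v_i\in\widetilde V_{\phi,2i-2}^{\gamma}(\widetilde P_1)$. The heart of the argument is a one-step recursion $\beta_{i+1}=\beta_i+[\alpha_i]+s_i\ZZ_m$ (Minkowski sum in $\ZZ_m$), obtained by splitting $v_{P_i}\to v_{C_i}\to v_{P_{i+1}}$ into two substeps: a point with offset $\beta$ lies on the single component $\widetilde C_i^{(j)}$ with $j\equiv\beta\pmod{s_i}$, because $w^{\mu_i}/h_i(P_i)=\zeta_{s_i}^{\,\beta}$; and from $\widetilde C_i^{(j)}$ the $\mu_i$ preimages of $P_{i+1}$ it meets have coordinates solving $w^{\mu_i}=\zeta_{s_i}^{\,j}h_i(P_{i+1})=\zeta_{s_i}^{\,j+\alpha_i}d_{i+1}^{\mu_i}$ by \eqref{eq: integer}, i.e.\ offsets $j+\alpha_i+s_i\ZZ$ relative to $d_{i+1}$. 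Since assumption \eqref{eq: assumption} forces a single branch of each $C_i$ at each $P$, these substeps account for all edges of the lifted walk. Iterating from $\beta_1=\{0\}$ gives $\beta_{n+1}=[\alpha]+(s_1\ZZ+\dots+s_n\ZZ)\ZZ_m=[\alpha]+s\ZZ_m$ with $s=\GCD(s_1,\dots,s_n)$; as $[t]\cdot\widetilde P_1$ has offset $t$, this set is exactly $\NV_\phi(\gamma,\widetilde P_1)$, which equals $\NV_\phi(\gamma)$ by Lemma~\ref{lem: gap of shifted walk} in the abelian case.

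The main obstacle I anticipate is the bookkeeping in the recursion, specifically verifying that the \emph{branching} of lifts is captured exactly: a lift over a branched cover is not determined by its initial vertex, so I must confirm that the two substeps enumerate \emph{all} reachable components over $C_i$ and \emph{all} reachable preimages over $P_{i+1}$ (not merely one), and that the sets $\widetilde V_{\phi,i}^{\gamma}$ defined in Section~\ref{sec: graph} update precisely by the Minkowski sum. The smoothness hypothesis \eqref{eq: assumption}, giving a unique local branch and hence no parallel edges, is what keeps each substep a clean coset operation; checking that the indexing $j\mapsto\zeta_{s_i}^{\,j}$ of components is compatible across the two consecutive curves $C_i,C_{i+1}$ at the shared point $P_{i+1}$ (via the congruence $j\equiv\beta\pmod{s_i}$) is the delicate point that makes the $\GCD$, rather than any single $s_i$, appear in the answer.
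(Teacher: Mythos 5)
Your proposal is correct and follows essentially the same route as the paper's proof: realize the cover over $U'$ as (the normalization of) $w^m=F$, identify the $s_i$ components of $\phi^\ast C_i$ with the loci $w^{\mu_i}=\zeta_{s_i}^{k}h_i$ on $f_i=0$, and establish exactly the paper's two membership criteria (your two substeps, matching Claim~\ref{claim: 1}(i) and (ii)) to obtain the Minkowski-sum recursion whose iteration from $\{0\}$ gives $[\alpha]+s\ZZ_m$ via $s=\GCD(s_1,\dots,s_n)$. The only quibble is the final citation: the identification $\NV_\phi(\gamma)=\NV_\phi(\gamma,\widetilde{P}_1)$ in the abelian case comes from Lemma~\ref{lem: conjugate} (conjugation is trivial), not from Lemma~\ref{lem: gap of shifted walk}.
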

\begin{proof}
Let $\widetilde{U}''$ be the subvariety of $U'\times\CC$ defined by 
\[
t^m=F, 
\]
where $t$ is a coordinate of $\CC$, 
and let $\phi'':\widetilde{U}''\to U'$ be the projection. 
Note that $\widetilde{U}'$ is the normalization of $\widetilde{U}''$. 
Moreover, the action of $\ZZ_m$ on $\widetilde{U}''$ is given by 
\begin{equation}\label{eq: action} 
[1]{\cdot}(P,\zeta_m^{j}d_P)=(P,\zeta_m^{j+1}d_P),  
\end{equation}
where $[1]$ denotes the image of $1\in\ZZ$ in $\ZZ_m$ (cf. Remark~\ref{rem: meridian 2}), and $d_P$ is a complex number with $d_P^{m}=F(P)$. 
Since $\widetilde{U}''$ is smooth over $U'\setminus\mcB$, we have $\widetilde{U}'\setminus(\phi')^{-1}(\mcB)\cong\widetilde{U}''\setminus(\phi'')^{-1}(\mcB)$. 
Thus it is enough to consider $(\phi'')^{\ast}\mcC$ to compute the net voltage class. 

Since $F(P_i)=d_i^m$ and $\widetilde{U}''$ is defined by $t^m=F$ in $U'\times\CC$, the preimage of $P_i$ under $\phi'':\widetilde{U}''\to U'$ consists of the following $m$ points 
\[ \widetilde{P}_{i,j}:=(P_i, \zeta_m^{j}d_i)\in\widetilde{U}''\subset U'\times \CC \ \ \  (j=0,\dots,m-1) \]
for $i=1,\dots,n+1$. 
Note that $\widetilde{P}_{n+1,j}=\widetilde{P}_{1,j}$ since $P_{n+1}=P_1$ and $d_{n+1}=d_1$. 
Let $\widetilde{C}_{i,k}$ be the irreducible component of $(\phi'')^{\ast}C_i$ defined by  the following equation in $U'\times\CC$; 
\[ \widetilde{C}_{i,k} : t^{\mu_i}-(\zeta_m^{\mu_i})^{k}h_i=f_i=0 \]
for each $i=1,\dots,n$ and $k=0,\dots,s_i-1$. 

\begin{claim}\label{claim: 1}
\begin{enumerate}
\item $\widetilde{P}_{i,j}\in\widetilde{C}_{i,k}$ if and only if $j\equiv k\pmod{s_i}$. 
\item $\widetilde{P}_{i+1,j}\in\widetilde{C}_{i,k}$ if and only if $j\equiv \alpha_{i}+k \pmod{s_{i}}$. 
\end{enumerate}
\end{claim}
\begin{proof}
The condition $\widetilde{P}_{i,j}\in\widetilde{C}_{i,k}$ is equivalent to $(\zeta_m^{\mu_i})^{j}d_i^{\mu_i}=(\zeta_m^{\mu_i})^{k}d_i^{\mu_i}$. 
Hence $\widetilde{P}_{i,j}\in\widetilde{C}_{i,k}$ if and only if $j\equiv k\pmod{s_i}$. 

The condition $\widetilde{P}_{i+1,j}\in\widetilde{C}_{i,k}$ is equivalent to $(\zeta_m^{\mu_{i}})^{j}d_{i+1}^{\mu_i}=(\zeta_m^{\mu_i})^{\alpha_{i}+k}d_{i+1}^{\mu_i}$. 
Thus $\widetilde{P}_{i+1,j}\in\widetilde{C}_{i,k}$ if and only if $j\equiv \alpha_{i}+k\pmod{s_i}$
\end{proof}

By Claim~\ref{claim: 1} (i), we have 
\[ 
\{ v_{\widetilde{C}_{i,k}} \mid \widetilde{P}_{i,j}\in\widetilde{C}_{i,k} \}= \{ v_{\widetilde{C}_{i,k}} \mid k=j+c_is_i \mbox{ for some $c_i\in\ZZ$} \}\subset V_{2i-1}^{\gamma}(v_{\widetilde{P}_{1,0}}) 
\]
for each $v_{\widetilde{P}_{i,j}}\in V_{2i-2}^{\gamma}(v_{\widetilde{P}_{1,0}})$. 
By Claim~\ref{claim: 1} (ii), hence, we obtain the following equation;
\begin{align*}
V_{2i}^{\gamma}(v_{\widetilde{P}_{1,0}})&=\bigcup_{v_{\widetilde{P}_{i,j}}\in V_{2i-2}^\gamma(v_{\widetilde{P}_{1,0}})}\left\{ v_{\widetilde{P}_{i+1,j'}} \,\middle|\, j'=j+\alpha_i+b_is_i \mbox{ for some $b_i\in\ZZ$} \right\} \\
&=\left\{ v_{\widetilde{P}_{i+1,j}} \ \middle|\ j=\sum_{i'=1}^{i}\alpha_{i'}+\sum_{i'=1}^{i}b_{i'}s_{i'} \mbox{ for some $b_1,\dots,b_{i}\in\ZZ$} \right\}
\end{align*}
Since $s$ is the greatest common divisor of $s_1,\dots,s_n$, we obtain the assertion. 
\end{proof}

\begin{rem}\label{rem: meridian 2}
Action (\ref{eq: action}) in the proof of Theorem~\ref{thm: gap class} coincides with the monodromy action on $\widetilde{U}''\setminus(\phi'')^{-1}(\mcB)\cong\widetilde{U}'\setminus(\phi')^{-1}(\mcB)$ of a meridian $[m_1]\in\pi_1(\PP^2\setminus\mcB)$ at a point $Q_1\in B_1\setminus\Sing(\mcB)$. 
Indeed, the path 
\[ [0,1]\ni t\mapsto\left(\epsilon\exp(2\pi\sqrt{-1}\,t),\, 0,\, \epsilon^{1/m}\exp(2\pi\sqrt{-1}\,t/m)\right)\in \widetilde{U}''\setminus(\phi'')^{-1}(\mcB) \] 
from $(\epsilon,0,\epsilon^{1/m})$ to $(\epsilon,0,\zeta_m\epsilon^{1/m})$ is a lift of the path 
\[\delta_{\epsilon}:[0,1]\ni t\mapsto (\epsilon\exp(2\pi\sqrt{-1}\,t),0)\in U'\setminus\mcB, \]
where $(x,y)$ is a system of local coordinates of $\PP^2$ at $Q_1$ so that $F=x$ at $Q_1$, and $\zeta_m:=\exp(2\pi\sqrt{-1}/m)$. 
\end{rem}

\section{Artal arrangements of degree $b$}\label{sec: artal arrangement}

In \cite{artal}, Artal studied plane curves $\mcC=E+L_1+L_2+L_3$, where $E$ is a smooth cubic, and $L_i$ ($i=1,2,3$) are non-concurrent inflectional tangents of $E$. 
He proved that a pair $(\mcC_1,\mcC_2)$ of such curves $\mcC_1,\mcC_2$ is a Zariski pair if the three tangent points of $\mcC_1$ are collinear, and those of $\mcC_2$ are not collinear. 
Tokunaga proved the same result by a different way in \cite{tokunaga}. 
In \cite{ban-ben-shi-tok}, such plane curves are called Artal arrangements.  
In \cite{shirane2}, the author defined an Artal arrangement of degree $b\geq 3$ as a plane curve consisting of one smooth curve of degree $b$ and non-concurrent three of its total inflectional tangents. 
In \cite{shirane2}, he partially distinguished the embedded topology of Artal arrangements. 
In this section, we define Artal arrangements of type $(\parti_1,\parti_2,\parti_3)$ for three partitions $\parti_i$ of an integer $d\geq3$, which is a generalization of Artal arrangements defined in \cite{ban-ben-shi-tok} and \cite{shirane2}.

\begin{defin}\label{def: artal arrangement}
Let $B\subset\PP^2$ be a smooth curve of degree $d\geq 3$. 
\begin{enumerate}
\item For a partition $\parti=(e_1,\dots,e_n)$ of $d$, we call a line $L\subset\PP^2$ a \textit{tangent of type $\parti$} of $B$ if $L$ intersects with $B$ at just $n$ points $P_1,\dots,P_n$ with multiplicity $e_1,\dots,e_n$, respectively. 
\item Let $\parti_1,\parti_2$ and $\parti_3$ be three partitions of $d$, and 
assume that there is a tangent $L_i$ of type $\parti_i$ of $B$ for each $i=1,2,3$. 
We call the plane curve $B+L_1+L_2+L_3$ an \textit{Artal arrangement of type $(\parti_1,\parti_2,\parti_3)$} if $L_1\cap L_2\cap L_3=\emptyset$ and $B\cap L_i\cap L_j=\emptyset$ for any $i\ne j$. 
\item For an Artal arrangement $\mcA:=B+L_1+L_2+L_3$, let $\phi_\mcA:X_\mcA\to\PP^2$ be the $\ZZ_d$-cover of type $B$. 
We call $\phi_\mcA$ the \textit{cyclic cover of the Artal arrangement $\mcA$}. 
We fix the surjection $\rho_\mcA:\pi_1(\PP^2\setminus B)\twoheadrightarrow\ZZ_d$ defined by $[m_B]\mapsto [1]$, where $m_B$ is a meridian of $B$ at a point of $B$. 
Furthermore, we call the splitting graph $\mcS_{\phi_{\mcA},L_1+L_2+L_3}$ the \textit{splitting graph of $\mcA$}, and denote it by $\mcS_\mcA$. 
\end{enumerate}
\end{defin}

\begin{rem}\label{rem: partition}
For an element $\sigma$ of the symmetric group $\gtS_3$ of three letters, two Artal arrangements $\mcA_1$ and $\mcA_2$ of type $(\parti_1,\parti_2,\parti_3)$ and $(\parti_{\sigma(1)},\parti_{\sigma(2)},\parti_{\sigma(3)})$, respectively, have the same combinatorics. 
To avoid confusion, we introduce an order on the set of partitions as follows: 

Let $\parti_i=(e_{i,1},\dots,e_{i,n_i})$ ($i=1,2$) be two partitions of $d$ with $1\leq e_{i,j}\leq e_{i,j'}$ for $j< j'$. 
Assume that $\parti_1\ne\parti_2$, and 
put $j_0:=\min\{j\mid e_{1,j}\ne e_{2,j}\}$. 
We write $\parti_1\prec\parti_2$ if $e_{1,j_0}<e_{2,j_0}$. 
We assume that any triple $(\parti_1,\parti_2,\parti_3)$ satisfies $\parti_1\preceq\parti_2\preceq\parti_3$. 
\end{rem}

Let $\parti_i:=(e_{i,1},\dots,e_{i,n_i})$ be a partition of $d\geq 3$ for each $i=1,2,3$, and put $\Parti:=(\parti_1,\parti_2,\parti_3)$. 
Let $\mcF_{\Parti}\subset\PP_\ast\HH^0(\PP^2,\mcO_{\PP^2}(d+3))$ be the family of Artal arrangements of type $\Parti$. 
Here $\PP_\ast\HH^0(\PP^2,\mcO_{\PP^2}(d+3))$ is the projective space of one-dimensional subspaces of the vector space $\HH^0(\PP^2,\mcO_{\PP^2}(d+3))$, 
which parameterizes all plane curves of degree $d+3$. 
Let $s_i$ be the greatest common divisor $\GCD(e_{i,1},\dots,e_{i,n_i})$ for each $i=1,2,3$, 
and put $s:=\GCD(s_1,s_2,s_3)$. 
Let $\mcA:=B+\mcL$ be an Artal arrangement of type $\Parti$, where $\mcL:=L_1+L_2+L_3$. 
Let $\mcG_\mcA$ denote the incidence graph $\mcG_{\phi_\mcA,\mcL}$ of $\mcL$ with respect to $\phi_{\mcA}:X_\mcA\to\PP^2$. 
Let $\gamma^+_\mcA$ be the following cycle on $\mcG_{\mcA}$: 
\begin{align*} 
\gamma^+_\mcA&:=(v_{P_1},v_{L_1},v_{P_2},v_{L_2},v_{P_3},v_{L_3},v_{P_1}), 
\end{align*}
where $P_1$, $P_2$ and $P_3$ are the intersections $L_3\cap L_1$, $L_1\cap L_2$ and $L_2\cap L_3$, respectively.

To compute net voltage classes of closed walks on $\mcG_{\mcA}$ for $\phi_\mcA$, it is enough to compute the net voltage classes of $\gamma^+_\mcA$ and its inverse walk $\gamma^-_\mcA:=(\gamma^+_\mcA)^{-1}$ by Lemma~\ref{lem: gap class of product} and \ref{lem: gap of shifted walk}. 
Note that the splitting number of $L_i$ for $\phi_\mcA$ is equal to $s_i$ by \cite[Theorem~2.7]{shirane}. 
By Theorem~\ref{thm: gap class}, the net voltage class of $\gamma_{\mcA}^+$ for $\phi_{\mcA}$ forms into 
\[ \NV_{\phi_\mcA}(\gamma^+_\mcA)=
[\beta]+s\ZZ_d \]
for some integer $\beta$ with $0\leq\beta<s$. 
By Lemma~\ref{lem: inverse walk}, we obtain $\NV_{\phi_\mcA}(\gamma_\mcA^-)=[-\beta]+s\ZZ_d$. 
For $0\leq\alpha\leq\lfloor s/2\rfloor$, let $\mcF_{\Parti}^\alpha\subset\mcF_{\Parti}$ be the set of Artal arrangements $\mcA$ of type $\Parti$ satisfying 
\[\{\NV_{\phi_\mcA}(\gamma_\mcA^+),\NV_{\phi_\mcA}(\gamma_\mcA^-)\}=\{[\alpha]+s\ZZ_d,[-\alpha]+s\ZZ_d\}, \]
where $\lfloor s/2\rfloor$ is the integer part of $s/2$. 
The family $\mcF_\Parti$ is decomposed into the following disjoint union: 
\[ \mcF_\Parti=\coprod_{\alpha=0}^{\lfloor s/2\rfloor}\mcF_\Parti^\alpha. \] 

\begin{thm}\label{thm: artal arrangement}
Let $\parti_i=(e_{i,1},\dots,e_{i,n_i})$ be three partition of $d\geq 3$ for $i=1,2,3$, and put $\Parti:=(\parti_1,\parti_2,\parti_3)$, $s_i:=\GCD(e_{i,1},\dots,e_{i,n_i})$ for $i=1,2,3$ and $s:=\GCD(s_1,s_2,s_3)$. 
Then, two Artal arrangements $\mcA_1,\mcA_2\in\mcF_{\Parti}$ have the same embedded topology if and only if the splitting graphs $\mcS_{\mcA_1}$ and $\mcS_{\mcA_2}$ are equivalent, $\mcS_{\mcA_1}\sim\mcS_{\mcA_2}$. 
Moreover, the followings hold: 
\begin{enumerate}
\item In the case where $\parti_i\ne\parti_j$ for any $i\ne j$, $\mcF_\Parti^\alpha$ consists of two connected components $\mcF_\Parti^{\alpha+}$ and $\mcF_\Parti^{\alpha-}$ if $0<\alpha<s/2$, and $\mcF_\Parti^\alpha$ is connected otherwise, i.e., either $\alpha=0$ or $\alpha=s/2$ if $s$ is even. 
\item In the case where $\parti_i=\parti_j$ for some $i\ne j$, $\mcF_\Parti^\alpha$ is connected for each $0\leq \alpha\leq \lfloor s/2\rfloor$. 
\item 
Let $\mcA$ be an Artal arrangement of $\mcF_{\Parti}^\alpha$ $(0\leq\alpha\leq\lfloor s/2\rfloor)$, and let $
\bar{h}:\PP^2\to\PP^2$ be the base change given by the complex conjugate homomorphism $\CC\to\CC$ $(z\mapsto\bar{z})$, which is a homeomorphism. 
Then $\bar{h}(\mcA)\in\mcF_{\Parti}^{\alpha}$. 
Moreover, in the case where $\parti_i\ne\parti_j$ for $i\ne j$ and $0<\alpha< s/2$, $\bar{h}(\mcA)\in\mcF_\Parti^{\alpha-}$ if $\mcA\in\mcF_\Parti^{\alpha+}$. 
\item For two Artal arrangements $\mcA_1, \mcA_2\in\mcF_\Parti$, there is a homeomorphism $h:\PP^2\to\PP^2$ with $h(\mcA_1)=\mcA_2$ if and only if $\mcA_1,\mcA_2\in\mcF_\Parti^{\alpha}$ for some $0\leq\alpha\leq\lfloor s/2\rfloor$. 
\end{enumerate}
\end{thm}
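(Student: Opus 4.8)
The plan is to funnel every part of the theorem through the single $\ZZ_d$-valued invariant produced by Theorem~\ref{thm: gap class}, proving the main equivalence and part (iv) together and then reading off (i)--(iii) from a deformation analysis of the family $\mcF_\Parti$. First I would pin down the incidence graph. Because $L_1\cap L_2\cap L_3=\emptyset$ and $B\cap L_i\cap L_j=\emptyset$, the set $\Sing(\mcL\setminus B)$ is exactly the three double points $P_1,P_2,P_3$, each on two of the lines, so $\mcG_\mcA=\mcG_{\phi_\mcA,\mcL}$ is a single hexagonal cycle whose only simple closed walks, up to cyclic shift and inversion, are $\gamma_\mcA^+$ and $\gamma_\mcA^-$. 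By Remark~\ref{rem: abelian case} and Corollary~\ref{cor: gap class}, the equivalence $\mcS_{\mcA_1}\sim\mcS_{\mcA_2}$ of $\ZZ_d$-covers is then controlled entirely by whether the unordered pair $\{\NV_{\phi_{\mcA_1}}(\gamma_{\mcA_1}^+),\NV_{\phi_{\mcA_1}}(\gamma_{\mcA_1}^-)\}$ is carried to the corresponding pair for $\mcA_2$ by one of the admissible automorphisms $\tau_d^\pm$ of Remark~\ref{rem: smooth curve}. Since $\tau_d^-$ negates $\ZZ_d$ while passing from $\gamma_\mcA^+$ to $\gamma_\mcA^-$ negates the net voltage class by Lemma~\ref{lem: inverse walk}, Theorem~\ref{thm: gap class} shows this pair equals $\{[\alpha]+s\ZZ_d,\,[-\alpha]+s\ZZ_d\}$ and that the admissible identifications preserve it; hence $\mcS_{\mcA_1}\sim\mcS_{\mcA_2}$ if and only if $\mcA_1$ and $\mcA_2$ lie in a common $\mcF_\Parti^\alpha$.

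Next I would settle the forward implications. A homeomorphism $h$ with $h(\mcA_1)=\mcA_2$ preserves the degrees of components, so $h(B_1)=B_2$ and $h(\mcL_1)=\mcL_2$; by Remark~\ref{rem: smooth curve} it induces $\tau\in\{\tau_d^\pm\}$ with $\rho_{\mcA_2}\circ h_\ast=\tau\circ\rho_{\mcA_1}$. Theorem~\ref{thm: splitting graph} together with Proposition~\ref{prop: gap class} then forces $\mcA_1,\mcA_2\in\mcF_\Parti^\alpha$ for a common $\alpha$, and Corollary~\ref{cor: splitting graph} gives $\mcS_{\mcA_1}\sim\mcS_{\mcA_2}$. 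Combined with the previous paragraph this proves ``$\Rightarrow$'' in (iv) and the implication of the main theorem from equal embedded topology to equivalent splitting graphs.

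For the converse of (iv) I would invoke the connectedness package, using that a path in a connected subset of $\mcF_\Parti$ is an equisingular deformation and so yields an ambient homeomorphism of the pairs $(\PP^2,\mcA_t)$ (cf. \cite{acc}). Statement (iii) is a direct computation: conjugating equation~(\ref{eq: integer}) replaces $\zeta_m$ by $\zeta_m^{-1}$ and each exponent $\alpha_i$ by $-\alpha_i$, so $\bar h$ preserves $\alpha$ and interchanges the two signs, sending $\mcF_\Parti^{\alpha+}$ to $\mcF_\Parti^{\alpha-}$. Granting (i) and (ii), any two members of a common $\mcF_\Parti^\alpha$ are therefore joined either by a path inside one connected component or, in the two-component case, by composing such a path with $\bar h$; either way one obtains an $h$ with $h(\mcA_1)=\mcA_2$, proving ``$\Leftarrow$'' in (iv).

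The main obstacle is the component count in (i) and (ii). The integer $\beta$ (before the $\pm$-reduction) is locally constant on $\mcF_\Parti$, so each $\mcF_\Parti^\alpha$ is open and closed, and the real content is that fixing $\beta$ leaves a connected set. I would analyse the parameter space of triples of tangent lines of prescribed types to a smooth degree-$d$ curve, exhibiting explicit one-parameter families that move the lines while preserving their contact types with $B$; this should reduce connectedness to that of suitable incidence varieties of lines meeting $B$ with the prescribed multiplicities. When the $\parti_i$ are distinct the labelling is rigid and $\beta,-\beta$ are genuinely different for $0<\alpha<s/2$, yielding the two components $\mcF_\Parti^{\alpha\pm}$, which collapse to one exactly when $\beta\equiv-\beta$, i.e.\ $\alpha=0$ or $\alpha=s/2$. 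When some $\parti_i=\parti_j$, interchanging the two equal-type lines is realisable by a path in $\mcF_\Parti$ that reverses the orientation of $\gamma_\mcA^+$ and hence swaps $\beta$ with $-\beta$, merging the components and giving (ii); verifying that this swap is genuinely realised by a continuous family, rather than only as an abstract symmetry, is the delicate point. Finally the main equivalence closes as the cycle: same embedded topology $\Leftrightarrow$ existence of $h$ with $h(\mcA_1)=\mcA_2$ $\Leftrightarrow$ common $\alpha$ $\Leftrightarrow$ $\mcS_{\mcA_1}\sim\mcS_{\mcA_2}$.
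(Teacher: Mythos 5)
Your overall architecture (meridian constraints via Lemma~\ref{lem: meridian}, net voltage classes of the hexagonal incidence graph, closing the chain of equivalences through part (iv)) matches the paper's, and your treatment of (iii) is essentially the paper's argument. But the heart of the theorem is the connectedness of the pieces $\mcF_\Parti^{\alpha\pm}$ in (i) and of $\mcF_\Parti^\alpha$ in (ii), and there your plan has a genuine gap. You propose to fix the smooth curve $B$ and ``move the lines while preserving their contact types with $B$,'' reducing to connectedness of incidence varieties of lines meeting $B$ with prescribed multiplicities. This cannot work: for a fixed smooth curve $B$ of degree $d$, a line of tangency type $\parti_i=(e_{i,1},\dots,e_{i,n_i})$ imposes $\sum_j(e_{i,j}-1)=d-n_i$ conditions on the two-dimensional space of lines, so whenever $d-n_i\geq 2$ (e.g.\ total inflectional tangents, the original Artal case) the lines of that type form a finite, possibly empty, set and admit no continuous deformation at all. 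The deformation must move $B$ and the lines simultaneously. The paper does this by first proving a normal-form lemma (Lemma~\ref{lem: defining eq.}: after a projective transformation the lines are the coordinate lines and $B$ is defined by $F_\Parti(\beta,\{c_{i,j}\},g_0)=0$ with normalized constants, which is also what makes the discrete invariant $\beta$ visible), and then proving that for fixed $\beta$ the parameter space of admissible $(\{c_{i,j}\},g_0)$ is connected (Lemma~\ref{lem: connected component}, via an irreducible cyclic cover of $(\CC^\times)^{n_i-1}$ together with Bertini/Lemma~\ref{lem: smooth}). Your proposal contains no substitute for this normalization-plus-coefficient-space argument.

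Two smaller points. First, in (ii) you correctly identify the ``delicate point'' (that swapping two equal-type lines is realized by an actual path in $\mcF_\Parti$, not just an abstract symmetry), but you leave it open; the paper closes it at no cost, because the swap is the projective transformation $y\leftrightarrow z$ and $\PGL_3(\CC)$ is path-connected, so $\mcA$ and $h(\mcA)$ automatically lie in one connected component. Second, in your first paragraph you assert that $\mcS_{\mcA_1}\sim\mcS_{\mcA_2}$ is ``controlled entirely'' by the unordered pair of net voltage classes; Corollary~\ref{cor: gap class} gives only the obstruction direction (unequal classes imply inequivalent covers), and neither you nor the paper proves the graph-theoretic converse. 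The paper instead obtains the implication ``common $\alpha$ $\Rightarrow$ equivalent splitting graphs'' geometrically, through connectedness and Corollary~\ref{cor: splitting graph}; your final ``cycle'' paragraph does the same, but it is only valid once (i), (ii) and (iv) are actually proved, which returns to the gap above.
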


In order to prove Theorem~\ref{thm: artal arrangement}, we prove four lemmas. 
We first seek a simple defining equation of an Artal arrangement, up to projective transforms of $\PP^2$. 
Let $L_x, L_y$ and $L_z$ be the lines defined by $x=0$, $y=0$ and $z=0$, respectively, and 
put $\mcL_{xyz}:=L_x+L_y+L_z$. 

\begin{lem}\label{lem: defining eq.}
Let $\mcA:=B+\mcL$ be an Artal arrangement of type $\Parti=(\parti_1,\parti_2,\parti_3)$, 
where $\mcL:=L_1+L_2+L_3$. 
Put $\mu_{i,j}:=e_{i,j}/s_i$. 
Then, after a certain projective transform of $\PP^2$, $\mcL$ satisfies $L_1=L_x$, $L_2=L_y$ and $L_3=L_z$, and 
$B$ is defined by $F_\Parti(\beta,\{c_{i,j}\},g_0)=0$ with 
{\footnotesize
\begin{align*}
&F_{\Parti}(\beta,\{c_{i,j}\},g_0) \\ :=&\prod_{j=1}^{n_1}(y+c_{1,j}z)^{e_{i,j}}+\prod_{j=1}^{n_2}(z+c_{2,j}x)^{e_{2,j}}+\prod_{j=1}^{n_3}(x+c_{3,j}y)^{e_{3,j}}-x^d-y^d-z^d+xyzg_0, \notag
\end{align*}
}
where $\beta$ is an integer with $0\leq\beta<s$, $g_0$ is a homogeneous polynomial of degree $d-3$ in $x,y,z$, and
$c_{i,j}$ are complex numbers satisfying the following conditions; 
\begin{align}\label{eq: condition of c} 
\prod_{j=1}^{n_i}c_{i,j}^{\mu_{i,j}}=1 \ \ (i=1,2), & &
\prod_{j=1}^{n_3}c_{3,j}^{\mu_{3,j}}=\zeta_d^{\beta\mu_3}, &&
c_{i,j}\ne c_{i,j'} \ \mbox{ if $j\ne j'$}, 
\end{align}
where $\mu_i:=d/s_i=\sum_{j=1}^{n_i}\mu_{i,j}$ for $i=1,2,3$. 
\end{lem}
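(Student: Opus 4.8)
The plan is to normalize a defining form of $B$ by moving the three lines onto the coordinate triangle, using the residual diagonal torus to pin down the multiplicative constants, and finally subtracting off the model polynomial. First I would apply a projective transformation of $\PP^2$ carrying $L_1,L_2,L_3$ to the coordinate triangle, so that $L_1=L_x$, $L_2=L_y$, $L_3=L_z$; this is possible because $L_1\cap L_2\cap L_3=\emptyset$. Let $F$ be a defining form of $B$. The vertices $P_1=L_z\cap L_x=[0:1:0]$, $P_2=L_x\cap L_y=[0:0:1]$ and $P_3=L_y\cap L_z=[1:0:0]$ avoid $B$ since $B\cap L_i\cap L_j=\emptyset$, and since $L_x,L_y,L_z$ are tangents of types $\parti_1,\parti_2,\parti_3$ the restrictions of $F$ to the coordinate lines factor as
\begin{align*}
F(0,y,z)&=a_1\prod_{j=1}^{n_1}(y+c_{1,j}z)^{e_{1,j}}, & F(x,0,z)&=a_2\prod_{j=1}^{n_2}(z+c_{2,j}x)^{e_{2,j}}, \\
F(x,y,0)&=a_3\prod_{j=1}^{n_3}(x+c_{3,j}y)^{e_{3,j}}, &&
\end{align*}
with $a_i\in\CC^\ast$ and pairwise distinct nonzero $c_{i,j}$ (distinct by the definition of a tangent of type $\parti_i$, and nonzero because the tangency points avoid the vertices). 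Comparing the two available expressions for the $z^d$-, $x^d$- and $y^d$-coefficients at $P_2,P_3,P_1$ yields the compatibility relations $a_1\prod_j c_{1,j}^{e_{1,j}}=a_2$, $a_2\prod_j c_{2,j}^{e_{2,j}}=a_3$, $a_3\prod_j c_{3,j}^{e_{3,j}}=a_1$, whose product gives $\prod_{i,j}c_{i,j}^{e_{i,j}}=1$.

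Next I would use the residual symmetry fixing the coordinate triangle, namely $(x,y,z)\mapsto(\lambda x,\mu y,\nu z)$ together with scaling $F\mapsto\rho F$, under which $a_1\mapsto\rho\mu^d a_1$, $a_2\mapsto\rho\nu^d a_2$, $a_3\mapsto\rho\lambda^d a_3$ and $c_{1,j}\mapsto(\nu/\mu)c_{1,j}$, $c_{2,j}\mapsto(\lambda/\nu)c_{2,j}$, $c_{3,j}\mapsto(\mu/\lambda)c_{3,j}$. Choosing suitable $d$-th roots I first arrange $a_1=a_2=a_3=1$; the relations above then give $\prod_j c_{i,j}^{e_{i,j}}=1$ for each $i$, so each $\prod_j c_{i,j}^{\mu_{i,j}}$ is an $s_i$-th root of unity. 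The stabilizer of $a_1=a_2=a_3=1$ is the finite group with $\lambda^d=\mu^d=\nu^d$, i.e. $\mu=\zeta_d^{p}\lambda$ and $\nu=\zeta_d^{q}\lambda$, and it multiplies $\prod_j c_{1,j}^{\mu_{1,j}}$, $\prod_j c_{2,j}^{\mu_{2,j}}$, $\prod_j c_{3,j}^{\mu_{3,j}}$ by $\zeta_{s_1}^{q-p}$, $\zeta_{s_2}^{-q}$, $\zeta_{s_3}^{p}$ respectively (using $\zeta_d^{\mu_i}=\zeta_{s_i}$). I would choose $p,q$ to force $\prod_j c_{1,j}^{\mu_{1,j}}=\prod_j c_{2,j}^{\mu_{2,j}}=1$; the remaining admissible shifts of $\prod_j c_{3,j}^{\mu_{3,j}}$ then come from $p$ ranging over multiples of $\gcd(s_1,s_2)$, which modulo $s_3$ generate $s\ZZ/s_3\ZZ$, so $\prod_j c_{3,j}^{\mu_{3,j}}$ can be brought to $\zeta_d^{\beta\mu_3}$ with $0\le\beta<s$, establishing conditions (\ref{eq: condition of c}). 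This bookkeeping — tracking the torus action on the three constants and checking that the admissible shifts of $\beta$ generate $s\ZZ/s_3\ZZ$ — is the step I expect to be the main obstacle, since it is exactly where the invariant $\beta$ and the bound $\beta<s$ come from.

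Finally, with $F$ normalized so that $F(0,y,z)$, $F(x,0,z)$, $F(x,y,0)$ are exactly the three monic products, I would set $H:=F-F_\Parti(\beta,\{c_{i,j}\},0)$, a form of degree $d$. A direct restriction computation, using $\prod_j c_{i,j}^{e_{i,j}}=1$, shows $H|_{x=0}=H|_{y=0}=H|_{z=0}=0$, so $H$ vanishes on $L_x,L_y,L_z$ and is therefore divisible by $xyz$. Writing $H=xyz\,g_0$ with $g_0$ homogeneous of degree $d-3$ gives $F=F_\Parti(\beta,\{c_{i,j}\},g_0)$, as required.
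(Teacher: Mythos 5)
Your proposal is correct and takes essentially the same approach as the paper: move the three lines to the coordinate triangle, normalize by diagonal projective transformations built from roots of unity so that the restrictions of $F$ to the coordinate lines become monic products, and use the gcd arithmetic of $s_1,s_2,s_3$ to force the residual invariant $\beta$ into $[0,s)$. The only differences are presentational — the paper obtains the shape of $F$ by direct coefficient comparison and reduces the exponent via an explicit Bezout identity $b_1s_1+b_2s_2+b_3s_3=ks$, whereas you package the same steps as divisibility of $F-F_\Parti(\beta,\{c_{i,j}\},0)$ by $xyz$ and as the statement that the residual stabilizer shifts the third product by the subgroup $s\ZZ/s_3\ZZ$.
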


\begin{proof}
It is clear that $\mcL$ satisfies $L_1=L_x$, $L_2=L_y$ and $L_3=L_z$ after a projective transform. 
Let $Q_{i,j}$ be the intersection point of $B$ and $L_i$ with $\I_{Q_{i,j}}(B,L_i)=e_{i,j}$ for each $i=1,2,3$ and $j=1,\dots,n_i$, and let $a_{i,j}$ be the complex number such that 
\begin{align*} 
Q_{1,j}&=(0{:}-a_{1,j}{:}1), &
Q_{2,j}&=(1{:}0{:}-a_{2,j}), &
Q_{3,j}&=(-a_{3,j}{:}1{:}0).
\end{align*}
Let $F=0$ be a defining equation of $B$. 
Since $B\cap L_i\cap L_j=\emptyset$ ($i\ne j$), we may assume that the coefficients of $x^d, y^d$ and $z^d$ in $F$ are equal to $1$. 
Since $\I_{Q_{i,j}}(B,L_i)=e_{i,j}$, a homogenous polynomial $F$ forms into 
\begin{align*}
F
&=\prod_{j=1}^{n_1}(y+a_{1,j}z)^{e_{1,j}}+g_1+xyzg_0 
\\
&=\prod_{j=1}^{n_2}(z+a_{2,j}x)^{e_{2,j}}+g_2+xyzg_0 
\\
&=\prod_{j=1}^{n_3}(x+a_{3,j}y)^{e_{3,j}}+g_3+xyzg_0, 
\end{align*}
where $g_1,g_2$ and $g_3$ are the sum of terms of $F$ which are not divisible by $xyz$, but by $x,y$ and $z$, respectively. 
Note that $\prod_{j=1}^{n_i}a_{i,j}^{e_{i,j}}=1$. 
Then the coefficient of $y^k z^{d-k}$ ($0<k<d$) in $F$ is equal to the one in $\prod_{j=1}^{n_1}(y+a_{i,j}z)^{e_{i,j}}$. 
Similarly, the coefficients of $x^k y^{d-k}$ and $x^k z^{d-k}$ are determined by the above equation. 
Hence we have 
{\small \begin{equation*}
F:=\prod_{j=1}^{n_1}(y+a_{1,j}z)^{e_{i,j}}+\prod_{j=1}^{n_2}(z+a_{2,j}x)^{e_{2,j}}+\prod_{j=1}^{n_3}(x+a_{3,j}y)^{e_{3,j}}-x^d-y^d-z^d+xyzg_0. 
\end{equation*} }
Since $\prod_{j=1}^{n_i}a_{i,j}^{e_{i,j}}=(\prod_{j}^{n_i}a_{i,j}^{\mu_{i,j}})^{s_i}=1$, 
we have 
\[ \prod_{j=1}^{n_i}a_{i,j}^{\mu_{i,j}}=(\zeta_d^{\mu_i})^{\beta_i} \]
for some integer $0\leq\beta_i<s_i$ ($i=1,2,3$). 
After the projective transform defined by $x\mapsto\zeta_d^{-\beta_1-\beta_2}x$, $y\mapsto y$ and $z\mapsto\zeta_d^{-\beta_1}z$, we obtain 
{\small
\[ F=\prod_{j=1}^{n_1}(y+b_{1,j}z)^{e_{1,j}}+\prod_{j=1}^{n_2}(z+b_{2,j}x)^{e_{2,j}}+\prod_{j=1}^{n_3}(x+b_{3,j}y)^{e_{3,j}}-x^d-y^d-z^d+xyzg_0, \]
}
where $b_{1,j}:=\zeta_d^{-\beta_1}a_{1,j}$, $b_{2,j}:=\zeta_d^{-\beta_2}a_{2,j}$ and $b_{3,j}:=\zeta_d^{\beta_1+\beta_2}a_{3,j}$. 
We have $\prod_{j=1}^{n_i}b_{i,j}=1$ for $i=1,2$, and $\prod_{j=1}^{n_3}b_{3,j}=(\zeta_d^{\mu_3})^{\beta'}$ for some integer $0\leq\beta'<s_3$. 
Suppose that $ks\leq\beta'<(k+1)s$. 
Let $b_1,b_2$ and $b_3$ be three integers so that $b_1s_1+b_2s_2+b_3s_3=ks$. 
By the projective transform given by $x\mapsto\zeta_d^{b_1s_1+b_2s_2}x$, $y\mapsto y$ and $z\mapsto\zeta_d^{b_1s_1}z$, 
we obtain $F=F_\Parti(\beta,\{c_{i,j}\},g_0)$, where $c_{i,j}:=\zeta_{b_is_i}b_{i,j}$ for $i=1,2$, $c_{3,j}:=\zeta_d^{-b_1s_1-b_2s_2}b_{3,j}$, and $\beta=\beta'-ks$. 
\end{proof}

Next, we prove that the curve defined by $F_\Parti(\beta,\{c_{i,j}\},g_0)=0$ is smooth for a general polynomial $g_0$. 

\begin{lem}\label{lem: smooth}
Fix an integer $\beta$ with $0\leq\beta<s$, and let $c_{i,j}$ be complex numbers satisfying (\ref{eq: condition of c}). 
Then the equation $F_\Parti(\beta,\{c_{i,j}\},g_0)=0$ defines a smooth curve $B$ on $\PP^2$ for a general homogeneous polynomial $g_0$. 
\end{lem}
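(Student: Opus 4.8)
The plan is to realize the curves $\{F_\Parti(\beta,\{c_{i,j}\},g_0)=0\}$ as the members of a linear system on $\PP^2$ parametrized by $g_0$, apply Bertini's theorem, and then deal with the finitely many base points by hand. Write
\[
F_0:=\prod_{j=1}^{n_1}(y+c_{1,j}z)^{e_{1,j}}+\prod_{j=1}^{n_2}(z+c_{2,j}x)^{e_{2,j}}+\prod_{j=1}^{n_3}(x+c_{3,j}y)^{e_{3,j}}-x^d-y^d-z^d,
\]
so that $F_\Parti(\beta,\{c_{i,j}\},g_0)=F_0+xyz\,g_0$. As $g_0$ runs over $\HH^0(\PP^2,\mcO_{\PP^2}(d-3))$, these forms constitute a dense open subset of the linear system $\Lambda:=\PP(\CC F_0+xyz\,\HH^0(\PP^2,\mcO_{\PP^2}(d-3)))$; since $F_0$ is not divisible by $xyz$ (its restriction to $L_x$ is nonzero, see below), a general $g_0$ corresponds to a general member of $\Lambda$. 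By Bertini's theorem in characteristic $0$, a general member of $\Lambda$ is smooth away from the base locus $\mathrm{Bs}(\Lambda)$, so it remains only to locate $\mathrm{Bs}(\Lambda)$ and to prove smoothness there.

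First I would compute $\mathrm{Bs}(\Lambda)$. A point $P$ lies in $\mathrm{Bs}(\Lambda)$ iff $F_0(P)=0$ and $xyz(P)\,g_0(P)=0$ for all $g_0$; since evaluation at $P$ is a nonzero functional on $\HH^0(\PP^2,\mcO_{\PP^2}(d-3))$ (as $d\ge 3$), this forces $xyz(P)=0$, whence $\mathrm{Bs}(\Lambda)=\{F_0=0\}\cap\{xyz=0\}$. Restricting $F_0$ to each coordinate line and invoking the hypotheses (\ref{eq: condition of c}), the leftover pure-power terms cancel: on $L_x=\{x=0\}$ one gets $F_0|_{L_x}=\prod_{j}(y+c_{1,j}z)^{e_{1,j}}+y^d(\prod_j c_{3,j}^{e_{3,j}}-1)=\prod_j(y+c_{1,j}z)^{e_{1,j}}$, because $\prod_j c_{3,j}^{e_{3,j}}=(\prod_j c_{3,j}^{\mu_{3,j}})^{s_3}=\zeta_d^{\beta\mu_3 s_3}=\zeta_d^{\beta d}=1$, and symmetrically on $L_y,L_z$ using the conditions for $i=1,2$. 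Hence $\{F_0=0\}\cap L_x$ is exactly the tangency points $Q_{1,j}=(0{:}-c_{1,j}{:}1)$, the three corner points are not on $B$ (a direct evaluation gives $F_0=1$ there), and $\mathrm{Bs}(\Lambda)=\{Q_{i,j}\}_{i,j}$ is a finite set of distinct smooth points of the coordinate lines.

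Finally I would prove smoothness of the general member at each base point $Q_{i,j}$; this is the step where Bertini gives nothing and is the main obstacle. By symmetry it suffices to treat $Q_{1,j}\in L_x$, working in the chart $z=1$ with $f:=F_\Parti(\cdot)|_{z=1}=f_0+xy\,g_0(x,y,1)$, where $f_0:=F_0|_{z=1}$. The crucial point is that $xyz\,g_0$ vanishes along $L_x$, so it does not affect the derivative tangent to $L_x$: one computes $\partial_y f(Q_{1,j})=\partial_y f_0(Q_{1,j})$, which by the previous paragraph equals the $y$-derivative of $\prod_k(y+c_{1,k})^{e_{1,k}}$ at $y=-c_{1,j}$. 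If $e_{1,j}=1$ this is nonzero (the $c_{1,k}$ are distinct), so $B$ is smooth at $Q_{1,j}$ for every $g_0$. If $e_{1,j}\ge 2$ it vanishes, and then the transverse derivative carries the $g_0$-dependence, $\partial_x f(Q_{1,j})=\partial_x f_0(Q_{1,j})-c_{1,j}\,g_0(Q_{1,j})$ with $c_{1,j}\ne 0$ (as $Q_{1,j}$ is not a corner), so imposing $g_0(Q_{1,j})\ne\partial_x f_0(Q_{1,j})/c_{1,j}$ makes $\nabla f(Q_{1,j})\ne 0$. Each such condition is a single nonzero affine condition on $g_0$, so intersecting the Bertini open set with the finitely many conditions coming from the high-multiplicity points $Q_{i,j}$ still leaves a nonempty Zariski-open set of $g_0$, for which $B$ is smooth everywhere. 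This completes the plan.
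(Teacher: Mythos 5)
Your proof is correct, and its skeleton is the same as the paper's: both realize the curves $F_\Parti(\beta,\{c_{i,j}\},g_0)=0$ as members of the linear system $\Lambda$ spanned by one such curve together with $xyz\cdot\HH^0(\PP^2,\mcO_{\PP^2}(d-3))$, apply Bertini away from the base locus, and then handle the base points separately. The differences lie in the two supporting steps. First, you actually verify that the base locus is exactly $\{Q_{i,j}\}$, by restricting $F_0$ to the coordinate lines and using (\ref{eq: condition of c}) to cancel the leftover pure-power terms, and by checking that the three corner points are not on the curve; the paper asserts this identification of the base points without proof, and your computation makes explicit that this is precisely where the normalization conditions on the $c_{i,j}$ enter. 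Second, at the base points the paper argues more slickly: the degenerate member $xyzg=0$ of $\Lambda$ is smooth at every $Q_{i,j}$ as soon as $g(Q_{i,j})\neq 0$, and one member of a linear system being smooth at a base point forces the general member to be smooth there (the gradients $a\nabla F+b\nabla(xyzg)$ can vanish at $Q_{i,j}$ for at most one ratio $(a{:}b)$). You instead compute gradients in an affine chart, which costs more work but yields slightly finer information: smoothness at $Q_{i,j}$ holds for \emph{every} $g_0$ when $e_{i,j}=1$, while at the higher-order tangency points it is guaranteed by a single non-trivial affine-linear condition such as $g_0(Q_{1,j})\neq \partial_x f_0(Q_{1,j})/c_{1,j}$, so the good set of $g_0$ is exhibited directly as a finite intersection of nonempty Zariski-open sets. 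Both routes are valid proofs of the lemma.
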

\begin{proof}

We consider the linear system $\Lambda$ consisting of curves defined by $aF_\Parti(\beta,\{c_{i,j}\},g_0)+bxyzg=0$ for $(a{:}b)\in\PP^1$ and $g\in\HH^0(\PP^2,\mcO_{\PP^2}(d-3))\setminus\{0\}$. 
Since the base points of $\Lambda$ are $Q_{1,j}=(0{:}-c_{1,j}{:}1), Q_{2,j}=(1{:}0{:}-c_{2,j}), Q_{3,j}=(-c_{3,j}{:}1{:}0)$, 
a general member of $\Lambda$ is smooth except for the base points $Q_{i,j}$ by Bertini's theorem (see \cite{kleiman}). 
Since $xyzg=0$ defines a curve smooth at all base points $Q_{i,j}$ if $g(Q_{i,j})\ne 0$ for any $i,j$, a general member of $\Lambda$ is smooth at $Q_{i,j}$. 
Therefore $F_\Parti(\beta,\{c_{i,j}\},g_0)=0$ defines a smooth curve $B$ on $\PP^2$ for a general homogeneous polynomial~$g_0$. 
\end{proof}

\begin{rem}
If $F_\Parti(\beta,\{c_{i,j}\},g_0)=0$ defines a smooth curve $B$, then $\mcA=B+\mcL_{xyz}$ is an Artal arrangement of type $\Parti=(\parti_1,\parti_2,\parti_3)$. 
In this case, we put $\gamma_{\mcA}^+:=(v_{P_1},v_{L_x},v_{P_2},v_{L_y},v_{P_3},v_{L_z},v_{P_1})$. 
\end{rem}

\begin{lem}\label{lem: alpha}
Fix an integer $\beta$ with $0\leq\beta<s$ 
and $c_{i,j}$ satisfying (\ref{eq: condition of c}). 
Put $\alpha:=\beta$ if $\beta\leq\lfloor s/2\rfloor$, and $\alpha:=s-\beta$ if $\beta>\lfloor s/2\rfloor$. 
Assume that $F_\Parti(\beta,\{c_{i,j}\},g_0)=0$ defines a smooth curve $B\subset\PP^2$.
Then 
the equation $\NV_{\phi_\mcA}(\gamma_\mcA^+)=[\beta]+s\ZZ_d$ holds. 
In particular, 
the Artal arrangement $\mcA:=B+\mcL_{xyz}$ is a member of $\mcF_{\Parti}^\alpha$. 
\end{lem}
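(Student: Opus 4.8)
The plan is to apply Theorem~\ref{thm: gap class} to the $\ZZ_d$-cover $\phi_\mcA$ (so $m=d$ and $\overline{\mcB}=B$) and the curve $\mcC=\mcL_{xyz}=L_x+L_y+L_z$, whose components are all smooth, along the walk $\gamma_\mcA^+=(v_{P_1},v_{L_x},v_{P_2},v_{L_y},v_{P_3},v_{L_z},v_{P_1})$ with $P_1=(0{:}1{:}0)$, $P_2=(0{:}0{:}1)$ and $P_3=(1{:}0{:}0)$. First I would fix an auxiliary line $L$ avoiding $P_1,P_2,P_3$, for instance $L=\{x+y+z=0\}$; it is transversal to $\mcL_{xyz}$ and is not a component of $B$, and the chosen representatives of $P_1,P_2,P_3$ already satisfy $x+y+z=1$, so the homogeneous evaluations below coincide with the affine ones used in the theorem.

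The first substantive step is to restrict $F_\Parti(\beta,\{c_{i,j}\},g_0)$ to each of $L_x,L_y,L_z$. Note first that equation~(\ref{eq: condition of c}) forces $\prod_{j}c_{i,j}^{e_{i,j}}=(\prod_{j}c_{i,j}^{\mu_{i,j}})^{s_i}=1$ for every $i$ (for $i=3$ because $(\zeta_d^{\beta\mu_3})^{s_3}=\zeta_d^{\beta d}=1$). Setting $x=0$, the two ``foreign'' products degenerate to $z^d$ and $\prod_j c_{3,j}^{e_{3,j}}\,y^d=y^d$, which cancel against $-y^d-z^d$, so that
\[ F_\Parti|_{x=0}=\prod_{j=1}^{n_1}(y+c_{1,j}z)^{e_{1,j}}=\left(\prod_{j=1}^{n_1}(y+c_{1,j}z)^{\mu_{1,j}}\right)^{s_1}=:h_1^{s_1}. \]
The symmetric computations on $L_y$ and $L_z$ give $h_2=\prod_j(z+c_{2,j}x)^{\mu_{2,j}}$ and $h_3=\prod_j(x+c_{3,j}y)^{\mu_{3,j}}$. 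Since $F_\Parti-h_i^{s_i}$ vanishes on $L_i$, it is divisible by the linear form $f_i$ defining $L_i$, giving $F_\Parti=f_ig_i+h_i^{s_i}$, which is the decomposition~(\ref{eq: g and h}) required by Theorem~\ref{thm: gap class} with $s_i$ the splitting number of $L_i$.

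Next I would carry out the evaluations. Uniformly, $h_i(P_i)=1$ and $h_i(P_{i+1})=\prod_j c_{i,j}^{\mu_{i,j}}$, which by equation~(\ref{eq: condition of c}) equals $1$ for $i=1,2$ and $\zeta_d^{\beta\mu_3}$ for $i=3$. Taking $d_i=1$ for all $i$, the relation $h_i(P_{i+1})=(\zeta_d^{\mu_i})^{\alpha_i}d_{i+1}^{\mu_i}$ becomes $\zeta_d^{\mu_i\alpha_i}=h_i(P_{i+1})$; since $\mu_i=d/s_i$, this reads $\alpha_i\equiv 0\pmod{s_i}$ for $i=1,2$ and $\alpha_3\equiv\beta\pmod{s_3}$. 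With $0\le\alpha_i<s_i$ and $0\le\beta<s\le s_3$ we get $\alpha_1=\alpha_2=0$ and $\alpha_3=\beta$, hence $\alpha=\beta$ and $\NV_{\phi_\mcA}(\gamma_\mcA^+)=[\beta]+s\ZZ_d$ by Theorem~\ref{thm: gap class}. For the final assertion, Lemma~\ref{lem: inverse walk} gives $\NV_{\phi_\mcA}(\gamma_\mcA^-)=[-\beta]+s\ZZ_d=[s-\beta]+s\ZZ_d$, so the unordered pair $\{\NV_{\phi_\mcA}(\gamma_\mcA^+),\NV_{\phi_\mcA}(\gamma_\mcA^-)\}$ equals $\{[\alpha]+s\ZZ_d,[-\alpha]+s\ZZ_d\}$ with $\alpha=\min\{\beta,s-\beta\}\in\{0,\dots,\lfloor s/2\rfloor\}$, which is exactly the $\alpha$ of the statement and the membership condition defining $\mcF_\Parti^\alpha$.

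The main obstacle is the restriction step: one must verify that on each line the two products not governing that line collapse to pure powers of a single variable whose leading coefficients are pinned to $1$ by the normalizations in equation~(\ref{eq: condition of c}), so that they cancel precisely against $-x^d-y^d-z^d$ and leave a genuine $s_i$-th power $h_i^{s_i}$. The only other delicate point is the modular bookkeeping that isolates $\alpha_3=\beta$ exactly, which uses the inequality $\beta<s\le s_3$ to select the representative in $0\le\alpha_3<s_3$.
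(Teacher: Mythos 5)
Your proof is correct and takes essentially the same route as the paper: both apply Theorem~\ref{thm: gap class} with the auxiliary line $x+y+z=0$, take $h_1=\prod_j(y+c_{1,j}z)^{\mu_{1,j}}$, $h_2=\prod_j(z+c_{2,j}x)^{\mu_{2,j}}$, $h_3=\prod_j(x+c_{3,j}y)^{\mu_{3,j}}$, evaluate at $P_1,P_2,P_3$ to get all values $1$ except $h_3(P_1)=\zeta_d^{\beta\mu_3}$, conclude $\NV_{\phi_\mcA}(\gamma_\mcA^+)=[\beta]+s\ZZ_d$, and finish with Lemma~\ref{lem: inverse walk}. The only differences are presentational: the paper works with the affine representatives $h_i/(x+y+z)^{\mu_i}$ where you work homogeneously with the normalization $x+y+z=1$, and you spell out the cancellation of the foreign products and the $\alpha_i$ bookkeeping that the paper leaves implicit.
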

\begin{proof}
Let $L$ be the line defined by $x+y+z=0$, which intersects transversally with $\mcL_{xyz}$, and is not a component of $B$. 
We compute the net voltage class $\NV_{\phi_\mcA}(\gamma_\mcA^+)$ 
by using Theorem~\ref{thm: gap class}. 
Let $P_1, P_2$ and $P_3$ be the singular points $(0{:}1{:}0)$, $(0{:}0{:}1)$ and $(1{:}0{:}0)$ of $\mcL_{xyz}$. 
The equation $F_\Parti(\beta,\{c_{i,j}\},g_0)/(x+y+z)^d=0$ is a defining equation of $B$ on $U:=\PP^2\setminus L$. 
We seek $h_i$ in (\ref{eq: g and h}) for $L_i$ ($i=x,y,z$), and compute $h_i(P_j)$ for $P_j\in L_i$. 
Since $L_x$ is defined by $x=0$ and $s_1=\GCD(e_{1,1},\dots,e_{1,n_1})$, 
we obtain 
\[ h_x=\frac{\prod_{j=1}^{n_1}(y+c_{1,j}z)^{\mu_{1,j}}}{(x+y+z)^{\mu_1}}. \]
Thus we have $h_x(P_1)=h_x(P_2)=1$. 
Similarly, we have $h_y(P_2)=h_y(P_3)=1$, $h_z(P_3)=1$ and $h_z(P_1)=(\zeta_d^{\mu_3})^\beta$. 
Hence, by Theorem~\ref{thm: gap class}, we obtain $\NV_{\phi_\mcA}(\gamma_\mcA^+)=[\beta]+s\ZZ_d$. 
Moreover, we obtain $\NV_{\phi_\mcA}(\gamma_\mcA^-)=[-\beta]+s\ZZ_d$ by Lemma~\ref{lem: inverse walk}. 
Therefore we have 
\[ \{\NV_{\phi_\mcA}(\gamma_\mcA^+),\NV_{\phi_\mcA}(\gamma_\mcA^-)\}=\{[\beta]+s\ZZ,[-\beta]+s\ZZ_d\}=\{[\alpha]+s\ZZ_d,[-\alpha]+s\ZZ_d\}. \qedhere\]
\end{proof}

Next we prove that $B+\mcL_{xyz}$ and $B'+\mcL_{xyz}$ are two members of a connected component of $\mcF_\Parti$ 
if $B$ and $B'$ are smooth curves defined by $F_\Parti(\beta,\{c_{i,j}\},g_0)=0$ and $F_\Parti(\beta,\{c_{i,j}'\},g_0')=0$, respectively. 

\begin{lem}\label{lem: connected component}
Fix an integer $0\leq \beta<s$. 
If $B$ and $B'$ are smooth curves defined by $F_\Parti(\beta,\{c_{i,j}\},g_0)=0$ and $F_\Parti(\beta,\{c'_{i,j}\},g_0')=0$, respectively, 
then the Artal arrangements $\mcA:=B+\mcL_{xyz}$ and $\mcA':=B'+\mcL_{xyz}$ are members of a connected component of $\mcF_\Parti$. 
In particular, there is a homeomorphism $h:(\PP^2,\mcA)\to(\PP^2,\mcA')$ such that $h_\ast([m_B])=[m_{B'}]$ for meridians $m_B$ and $m_{B'}$ of $B$ and $B'$, respectively. 
\end{lem}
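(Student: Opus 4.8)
The plan is to connect $\mcA$ and $\mcA'$ by a continuous path inside $\mcF_\Parti$ and then invoke topological triviality of equisingular families. With $\beta$ fixed as in the hypothesis, I would introduce the parameter variety
\[ \mathcal{P}:=\mathcal{P}_1\times\mathcal{P}_2\times\mathcal{P}_3\times\HH^0(\PP^2,\mcO_{\PP^2}(d-3)), \]
where $\mathcal{P}_i\subset(\CC^\ast)^{n_i}$ is the fibre $\{(c_{i,j})_j\mid \prod_{j=1}^{n_i}c_{i,j}^{\mu_{i,j}}=\lambda_i\}$ with $\lambda_1=\lambda_2=1$ and $\lambda_3=\zeta_d^{\beta\mu_3}$; thus a point of $\mathcal{P}$ is exactly a datum $(\{c_{i,j}\},g_0)$ satisfying (\ref{eq: condition of c}) and admissible in $F_\Parti(\beta,\{c_{i,j}\},g_0)$. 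The first step is to show that $\mathcal{P}$ is irreducible. The last factor is a vector space, and each $\mathcal{P}_i$ is a coset of the subtorus $\{\prod_j c_{i,j}^{\mu_{i,j}}=1\}$ of $(\CC^\ast)^{n_i}$. Since $s_i=\GCD(e_{i,1},\dots,e_{i,n_i})$ and $\mu_{i,j}=e_{i,j}/s_i$, we have $\GCD(\mu_{i,1},\dots,\mu_{i,n_i})=1$, so the monomial map $(c_{i,j})_j\mapsto\prod_j c_{i,j}^{\mu_{i,j}}$ is a primitive character of the torus and its kernel is a \emph{connected} subtorus; hence $\mathcal{P}_i$ is connected, and being smooth it is irreducible. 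A product of irreducible complex varieties is irreducible, so $\mathcal{P}$ is irreducible.

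Next I would single out the locus $\mathcal{U}\subset\mathcal{P}$ of data that actually produce Artal arrangements. A point $(\{c_{i,j}\},g_0)$ gives an Artal arrangement $B+\mcL_{xyz}$ of type $\Parti$ precisely when, for each fixed $i$, the $c_{i,j}$ are pairwise distinct (this is what makes $L_x,L_y,L_z$ tangents of types $\parti_1,\parti_2,\parti_3$; the conditions $L_x\cap L_y\cap L_z=\emptyset$ and $B\cap L_i\cap L_j=\emptyset$ hold automatically for the normal form $F_\Parti$) and when $B$ is smooth. Both conditions are Zariski-open and nonempty—distinctness generically, and smoothness by Lemma~\ref{lem: smooth}—so $\mathcal{U}$ is a nonempty Zariski-open subset of the irreducible variety $\mathcal{P}$, hence itself irreducible. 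An irreducible complex quasi-projective variety is connected, and being locally path-connected in the Euclidean topology it is path-connected. As $\beta$ is the same for $\mcA$ and $\mcA'$, both $(\{c_{i,j}\},g_0)$ and $(\{c'_{i,j}\},g'_0)$ lie in $\mathcal{U}$, so there is a continuous path joining them, giving a family $\mcA_t=B_t+\mcL_{xyz}\in\mcF_\Parti$ ($t\in[0,1]$) with $\mcA_0=\mcA$ and $\mcA_1=\mcA'$. This places $\mcA$ and $\mcA'$ in one connected component of $\mcF_\Parti$.

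For the final assertion I would use that $\{\mcA_t\}$ is \emph{equisingular}: each member is an Artal arrangement of type $\Parti$ of constant combinatorics, and, crucially, the three lines $\mcL_{xyz}$ do not move. By the local topological triviality of equisingular families of plane curves (cf. \cite[Remark~3]{acc}, via a stratified Ehresmann/Thom isotopy argument over the path), there is an ambient isotopy $\{h_t\}$ of $\PP^2$ with $h_0=\id$ and $h_t(\mcA_0)=\mcA_t$; set $h:=h_1$, so $h(\mcL_{xyz})=\mcL_{xyz}$ and $h(B)=B'$. Since this isotopy begins at the identity and preserves orientation, it carries a small positively oriented meridian of $B$ to a positively oriented meridian of $B'$, whence $h_\ast([m_B])=[m_{B'}]$.

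The main obstacle I expect is the connectivity of the parameter space, and specifically the irreducibility of each constraint fibre $\mathcal{P}_i$: this is exactly where the coprimality $\GCD(\mu_{i,1},\dots,\mu_{i,n_i})=1$ is indispensable, since it guarantees the defining character is primitive and the fibre is a coset of a \emph{connected} subtorus rather than a disjoint union of several cosets (which would threaten to separate $\mcA$ from $\mcA'$). A secondary point requiring care is keeping the path inside the smooth-and-distinct locus $\mathcal{U}$; I handle this abstractly through openness and irreducibility rather than by exhibiting an explicit path, and I quote the final topological triviality from \cite{acc} instead of reproving it.
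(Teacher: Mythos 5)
Your proposal is correct and follows essentially the same route as the paper's proof: both reduce everything to connectedness of the parameter space of admissible data $(\{c_{i,j}\},g_0)$, treat the distinctness and smoothness conditions as nonempty Zariski-open conditions in an irreducible variety (via Lemma~\ref{lem: smooth}), and then invoke topological triviality of the resulting equisingular family over a path to get the homeomorphism with $h_\ast([m_B])=[m_{B'}]$ (a step the paper also takes for granted rather than proving). The only difference is the micro-argument for connectedness of each constraint fibre: you use that $\GCD(\mu_{i,1},\dots,\mu_{i,n_i})=1$ makes $(c_{i,j})\mapsto\prod_j c_{i,j}^{\mu_{i,j}}$ a primitive character whose kernel is a connected subtorus, while the paper realizes the same fibre as a degree-$\mu_{i,n_i}$ unramified cover of $(\CC^\times)^{n_i-1}$ that is connected because the monodromy $\pi_1((\CC^\times)^{n_i-1})\twoheadrightarrow\ZZ_{\mu_{i,n_i}}$ is surjective --- two equivalent uses of the same coprimality.
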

\begin{proof}
Let $U_\sm \subset(\CC^\times)^{n_1+n_2+n_3}\times\HH^0(\PP^2,\mcO_{\PP^2}(d-3))$ be the following subset: 
\begin{align*} 
U_\sm :=
\left\{
\left((c_{i,j}), g_0\right) \,\middle|\,
\begin{array}{l} 
\mbox{$(c_{i,j})\in\CC^{n_1+n_2+n_3}$ satisfies equation (\ref{eq: condition of c}), and} \\ \mbox{$F_\Parti(\beta,\{c_{i,j}\},g_0)=0$ defines a smooth curve}
\end{array}
\right\}. 
\end{align*}
It is enough to prove that $U_\sm$ is connected. 
Let $V'_i$ be the following subset of $(\CC^\times)^{n_i}$ for $i=1,2,3$: 
\[ V_i':=\left\{(c_{i,1},\dots,c_{i,n_i})\in(\CC^\times)^{n_i}\ \middle| \ \prod_{j=1}^{n_i}c_{i,j}^{\mu_{i,j}}=(\zeta_d^{\mu_i})^{\beta_i}\right\}, \]
where $\beta_1=\beta_2=0$ and $\beta_3=\beta$. 
Consider the projection $\pr_i:V_i'\to(\CC^\times)^{n_i-1}$ defined by $(c_{i,1},\dots,c_{i,n_i-1},c_{i,n_i})\mapsto(c_{i,1},\dots,c_{i,n_i-1})$. 
Since $c_{i,n_i}^{\mu_{i,n_i}}=(\zeta_d^{\mu_i})^{\beta_i}\prod_{j=1}^{n_i-1}c_{i,j}^{-\mu_{i,j}}$, the preimage of $\pr_i$ at a point of $(\CC^\times)^{n_i-1}$ consists of just $\mu_{i,n_i}$ points. 
Moreover, since $\GCD(\mu_{i,1},\dots,\mu_{i,n_i})=1$, 
$\pr_i$ is the unramified covering induced by the surjection $\pi_1((\CC^\times)^{n_i-1})\twoheadrightarrow\ZZ_{\mu_{i,n_i}}$ which maps any meridian of $\{c_{i,j}=0\}$ to $[-\mu_{i,j}]\in\ZZ_{\mu_{i,n_i}}$ for $j=1,\dots,n_i-1$. 
Hence $V_i'$ is smooth and irreducible. 
Let $U_i'$ be the following subset of $(\CC^\times)^{n_i}$: 
\[ U_i':=\{(c_{i,1},\dots,c_{i,n_i})\in(\CC^\times)^{n_i}\mid c_{i,j}\ne c_{i,j'} \ (j\ne j')\}. \]
Since $U_i'$ is a Zariski open subset of $(\CC^\times)^{n_i}$, $V_i:=U_i'\cap V_i'$ is a Zariski open subset of $V_i'$, hence $V_i$ is connected. 
For any $(c_{i,j})\in V_1\times V_2\times V_3$, $F_\Parti(\beta,\{c_{i,j}\},g_0)=0$ defines a smooth curve for a general $g_0$ by Lemma~\ref{lem: smooth}. 
Therefore, $U_\sm$ is a non-empty open subset of $V_1\times V_2\times V_3\times\HH^0(\PP^2,\mcO_{\PP^2}(d-3))$, 
and $U_\sm$ is connected. 
\end{proof}

We prove Theorem~\ref{thm: artal arrangement} by using Lemmas~\ref{lem: defining eq.}, \ref{lem: smooth}, \ref{lem: alpha} and \ref{lem: connected component}.

\begin{proof}[Proof of Theorem~\ref{thm: artal arrangement}]
Suppose that the splitting graphs $\mcS_{\mcA_1}$ and $\mcS_{\mcA_2}$ are equivalent, i.e., 
there exist a homeomorphism $h':\mcT(\mcA_1)\to\mcT(\mcA_2)$ and an automorphism $\tau:\ZZ_d\to\ZZ_d$ satisfying conditions (i), (ii), (iii) in Definition~\ref{def: equivalence of splitting graph}. 
By the proof of Lemma~\ref{lem: meridian}, we have $\tau([1])=[\pm 1]$ since $\rho_{\mcA_i}([m_B])=[1]$. 
By Corollary~\ref{cor: gap class}, we obtain the following equation:
\begin{equation}\label{eq: gap class}
\{\NV_{\phi_{\mcA_1}}(\gamma_{\mcA_1}^+),\NV_{\phi_{\mcA_1}}(\gamma_{\mcA_1}^-)\}=\{\NV_{\phi_{\mcA_2}}(\gamma_{\mcA_2}^+),\NV_{\phi_{\mcA_2}}(\gamma_{\mcA_2}^-)\}. 
\end{equation}
Hence we obtain $\mcA_1,\mcA_2\in\mcF_\Parti^\alpha$ for some $0\leq\alpha\leq\lfloor s/2\rfloor$. 

Note that, if there exists a homeomorphism $h:(\PP^2,\mcA_1)\to(\PP^2,\mcA_2)$ for Artal arrangements $\mcA_i\in\mcF_\Parti$ ($i=1,2$), 
then $\mcS_{\mcA_1}\sim\mcS_{\mcA_2}$ by Theorem~\ref{thm: splitting graph} 
since $h$ must satisfy $h(B_1)=B_2$ and the isomorphism $h_\ast:\pi_1(\PP^2\setminus B_1)\to\pi_1(\PP^2\setminus B_2)$ is given by $h_\ast([m_{B_1}])=[m_{B_2}]^{\pm 1}$ (cf. Remark~\ref{rem: smooth curve}). 
Furthermore, 
if two Artal arrangements $\mcA_i:=B_i+\mcL_i\in\mcF_\Parti$ is members of the same connected component of $\mcF_\Parti$, then there exists a homeomorphism $h:(\PP^2,\mcA_1)\to(\PP^2,\mcA_2)$ with $h_\ast([m_{B_1}])=[m_{B_2}]$. 
Thus it is enough to prove assertions (i), (ii), (iii) and (iv) in Theorem~\ref{thm: artal arrangement}.

(i) 
Suppose that $\parti_i\ne\parti_j$ for any $i\ne j$. 
Let $\mcA_i:=B_i+\sum_{j=1}^3L_{i,j}$ be an Artal arrangement of type $\Parti$ for each $i=1,2$. 
Assume that there is a homeomorphism $h:(\PP^2,\mcA_1)\to(\PP^2,\mcA_2)$ such that $h_\ast([m_{B_1}])=[m_{B_2}]$ for meridians $m_{B_i}$ of $B_i$. 
In this case, $h$ must satisfy $h(L_{1,j})=L_{2,j}$ for $j=1,2,3$ (see Remark~\ref{rem: partition}), and if an automorphism $\tau:\ZZ_d\to\ZZ_d$ satisfies $\tau\circ\rho_{\mcA_1}=\rho_{\mcA_2}\circ h_\ast$, then $\tau([1])=[1]$. 
Hence we have $\theta_h(\gamma_{\mcA_1}^+)=\gamma_{\mcA_2}^+$ and $\NV_{\phi_{\mcA_1}}(\gamma_{\mcA_1}^+)=\NV_{\phi_{\mcA_2}}(\gamma_{\mcA_2}^+)$. 

For $0<\alpha<s/2$, let $\mcF_\Parti^{\alpha\pm}$ be the subsets consisting of $\mcA\in\mcF_\Parti$ with $\NV_{\phi_{\mcA}}(\gamma_\mcA^+)=[\pm\alpha]+s\ZZ_d$, respectively. 
By Lemmas~\ref{lem: defining eq.}, \ref{lem: alpha} and \ref{lem: connected component}, $\mcF_\Parti^{\alpha\pm}$ are connected. 
Since $[\alpha]+s\ZZ_d\ne[-\alpha]+s\ZZ_d$ for $0<\alpha<s/2$, $\mcF_\Parti^{\alpha+}\cap\mcF_\Parti^{\alpha-}=\emptyset$ by Lemma~\ref{lem: connected component}. 

For $\alpha=0$ or $\alpha=s/2$ if $s$ is even, we have $[\alpha]+s\ZZ_d=[-\alpha]+s\ZZ_d$. 
Hence, by Lemmas~\ref{lem: defining eq.} and \ref{lem: alpha}, 
an Artal arrangement $\mcA\in\mcF_\Parti^\alpha$ is projective equivalent to a curve defined by $F_\Parti(\alpha,\{c_{i,j}\},g_0)=0$. 
Therefore, $\mcF_\Parti^\alpha$ is connected by Lemma~\ref{lem: connected component}. 

(ii) 
Suppose that $\parti_2=\parti_3$. 
Let $B$ be a smooth curve defined by $F_\Parti(\alpha,\{c_{i,j}\},g_0)$, and 
put $\mcA:=B+\mcL_{xyz}$. 
Let $h:\PP^2\to\PP^2$ be the projective transformation defined by $x\mapsto x$, $y\mapsto z$ and $z\mapsto y$, and 
put $\mcA':=h(\mcA)=h(B)+\mcL_{xyz}$. 
Then $\theta_{h}(\gamma_\mcA^\pm)=\gamma_{\mcA'}^\mp$, respectively, up to cyclic permutations of vertices (see Lemma~\ref{lem: gap of shifted walk}). 
Hence $\mcA$ and $\mcA'$ are members of a connected component of $\mcF_\Parti^\alpha$, 
and $\mcF_\Parti^\alpha$ is connected by Lemmas~\ref{lem: defining eq.}, \ref{lem: alpha} and \ref{lem: connected component}. 

(iii) 
Let $\mcA:=B+L_1+L_2+L_3$ be an Artal arrangement in $\mcF_\Parti^{\alpha}$ with $\NV_{\phi_\mcA}(\gamma_{\mcA}^+)=[\alpha]+s\ZZ_d$, and put $\bar{B}:=\bar{h}(B)$. 
Let $m_B$ be a meridian of $B$ at a point $P\in B$. 
By the definition of meridians, $(\bar{h}\circ m_B)^{-1}$ is a meridian of $\bar{B}$ at $\bar{h}(P)\in \bar{B}$. 
Hence we have $\tau([1])=[-1]$ for the automorphism $\tau:\ZZ_d\to\ZZ_d$ such that $\rho_{\bar{h}(\mcA)}\circ \bar{h}_{\ast}=\tau\circ\rho_\mcA$, 
Thus we obtain $\NV_{\phi_{\bar{h}(\mcA)}}(\gamma_{\bar{h}(\mcA)}^+)=[-\alpha]+s\ZZ_d$, and $\bar{h}(\mcA)\in\mcF_\Parti^\alpha$. 
In particular, in the case where $\parti_i\ne\parti_j$ for any $i\ne j$ and $0<\alpha<s/2$, if $\mcA\in\mcF_\Parti^{\alpha+}$, then $\bar{h}(\mcA)\in\mcF_\Parti^{\alpha-}$. 

(iv) The assertion follows from (i), (ii) and (iii). 
\end{proof}

As a corollary of Theorem~\ref{thm: artal arrangement}, we obtain Zariski $k$-plets of Artal arrangements. 

\begin{cor}\label{cor: zariski pair}
Let $\parti_i$ be a partition $(e_{i,1},\dots,e_{i,n_i})$ of $d\geq 3$ for each $i=1,2,3$. 
Let $s$ be the greatest common divisor of $e_{i,j}$, $i=1,2,3$ and $j=1,\dots,n_i$. 
Then there is a Zariski $(\lfloor s/2\rfloor+1)$-plet $(\mcA_0,\dots,\mcA_{\lfloor s/2\rfloor})$ of Artal arrangements $\mcA_i$ of type $\Parti$, i.e., $(\mcA_i,\mcA_j)$ is a Zariski pair for any $0\leq i< j\leq\lfloor s/2\rfloor$. 
\end{cor}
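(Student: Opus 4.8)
The plan is to realize every stratum of the decomposition $\mcF_\Parti=\coprod_{\alpha=0}^{\lfloor s/2\rfloor}\mcF_\Parti^\alpha$ by an explicit arrangement, and then to read off the Zariski $(\lfloor s/2\rfloor+1)$-plet directly from Theorem~\ref{thm: artal arrangement}. Concretely, for each $\alpha\in\{0,1,\dots,\lfloor s/2\rfloor\}$ I would choose one arrangement $\mcA_\alpha\in\mcF_\Parti^\alpha$ and claim that $(\mcA_0,\dots,\mcA_{\lfloor s/2\rfloor})$ is the desired tuple: all of them share the combinatorics of type $\Parti$ (hence are pairwise equisingular), while arrangements from different strata will be shown to have different embedded topology.

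First I would check that each $\mcF_\Parti^\alpha$ is non-empty. Fixing $\beta=\alpha$ (note $0\le\alpha\le\lfloor s/2\rfloor<s$), the proof of Lemma~\ref{lem: connected component} exhibits a non-empty set of tuples $(c_{i,j})$ satisfying the constraints (\ref{eq: condition of c}); for such a tuple, Lemma~\ref{lem: smooth} provides a general homogeneous polynomial $g_0$ of degree $d-3$ for which $F_\Parti(\beta,\{c_{i,j}\},g_0)=0$ cuts out a smooth curve $B$. Setting $\mcA_\alpha:=B+\mcL_{xyz}$, Lemma~\ref{lem: defining eq.} tells us $\mcA_\alpha$ is an Artal arrangement of type $\Parti$, and Lemma~\ref{lem: alpha} computes $\NV_{\phi_{\mcA_\alpha}}(\gamma_{\mcA_\alpha}^+)=[\beta]+s\ZZ_d=[\alpha]+s\ZZ_d$, so that $\mcA_\alpha\in\mcF_\Parti^\alpha$.

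Finally I would conclude. For $0\le i<j\le\lfloor s/2\rfloor$ the decomposition $\mcF_\Parti=\coprod_{\alpha}\mcF_\Parti^\alpha$ is disjoint, so $\mcA_i$ and $\mcA_j$ lie in distinct strata; equivalently, the net voltage class sets $\{[i]+s\ZZ_d,[-i]+s\ZZ_d\}$ and $\{[j]+s\ZZ_d,[-j]+s\ZZ_d\}$ differ, the underlying injectivity of the assignment $\alpha\mapsto\{[\alpha]+s\ZZ_d,[-\alpha]+s\ZZ_d\}$ on $\{0,\dots,\lfloor s/2\rfloor\}$ being the elementary fact that $i\equiv\pm j\pmod s$ with $0\le i,j\le\lfloor s/2\rfloor$ forces $i=j$. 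By Theorem~\ref{thm: artal arrangement}(iv) there is then no homeomorphism $h:\PP^2\to\PP^2$ with $h(\mcA_i)=\mcA_j$, so $\mcA_i$ and $\mcA_j$ have different embedded topology; since they are equisingular, $(\mcA_i,\mcA_j)$ is a Zariski pair. Hence $(\mcA_0,\dots,\mcA_{\lfloor s/2\rfloor})$ is a Zariski $(\lfloor s/2\rfloor+1)$-plet. The only real work is the non-emptiness of each stratum, which is already packaged in Lemmas~\ref{lem: smooth} and \ref{lem: connected component}; the distinctness of the strata is immediate from the disjointness of the decomposition, so I do not expect a genuine obstacle beyond this bookkeeping.
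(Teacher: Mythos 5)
Your proposal is correct and follows essentially the same route as the paper: the paper's proof simply picks a member $\mcA_\alpha\in\mcF_\Parti^\alpha$ for each $\alpha=0,\dots,\lfloor s/2\rfloor$ and invokes Theorem~\ref{thm: artal arrangement}. The only difference is that you make explicit two points the paper leaves implicit — non-emptiness of each stratum via Lemmas~\ref{lem: smooth} and~\ref{lem: alpha}, and the injectivity of $\alpha\mapsto\{[\alpha]+s\ZZ_d,[-\alpha]+s\ZZ_d\}$ — both of which are indeed the correct supporting facts.
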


\begin{proof}
Let $\mcA_\alpha$ be a member of $\mcF_\Parti^\alpha$ for each $\alpha=0,\dots,\lfloor s/2\rfloor$. 
Then $(\mcA_0,\dots,\mcA_{\lfloor s/2\rfloor})$ is a Zariski $(\lfloor s/2\rfloor+1)$-plet by Theorem~\ref{thm: artal arrangement}. 
\end{proof}

\noindent
\textbf{Acknowledgement.} 
The author thanks Professor Alex Degtyarev for his useful comment. 
He also thanks Professor Hiro-o Tokunaga and Professor Shinzo Bannai for valuable discussions and comments.


\begin{thebibliography}{99}

	\bibitem{artal} E.~Artal, \emph{Sur les couples de Zariski}, J. Algebraic Geom., \textbf{3} (1994), 223--247. 

	\bibitem{acc} E.~Artal, J.~Carmona, and J.I.~Cogolludo, 
	\emph{Braid monodromy and topology of plane curves}, Duke Math. J., \textbf{118} (2003), no. 2, 261--278. 

	\bibitem{act} E.~Artal, J.-I.~Codgolludo, and H.~Tokunaga:
\emph{A survey on Zariski pairs}, Adv. Stud. Pure Math., \textbf{50} (2008), 1--100.

	\bibitem{artal-tokunaga} E.~Artal, and H.~Tokunaga, \emph{Zariski $k$-plets of rational curve arrangements and dihedral covers},  Topology Appl. 
  \textbf{142} (2004),  227--233.

	\bibitem{bannai} S.~Bannai, \emph{A note on splitting curves of plane quartics and multi-sections of rational elliptic surfaces}, Topology Appl., \textbf{202} (2016), 428--439. 

	\bibitem{ban-ben-shi-tok} S.~Bannai, B.~Guerville-Ball\'e, T.~Shirane, and H.~Tokunaga, 
	\emph{On the topology of arrangements of a cubic and its inflectional tangents}, Proc. Japan Acad. Ser. A Math. Sci. \textbf{93} (2017), no. 6, 50--53. 

	\bibitem{bannai-shirane} S.~Bannai, and T.~Shirane, \emph{Nodal curves with a contact-conic and Zariski pairs}, available at arXiv:1608.03760. 

    \bibitem{degtyarev} A.~Degtyarev, \emph{On deformations of singular plane sextics}, J. Algebraic Geom. \textbf{17} (2008), no. 1, 101--135. 

    \bibitem{gross-tucker} J.L.~Gross, T.W.~Tucker, 
    \emph{Topological graph theory}, 
    Dover publications, Inc. Mineola, New York (1987).

	\bibitem{benoit-meilhan} B.~Guerville-Ball\'e and J.~B.~Meilhan, \emph{A linking invariant for algebraic curves}, arXiv:1602.04916. 

	\bibitem{benoit-shirane} B.~Guerville-Ball\'e, T.~Shirane, \emph{Non-homotopicity of the linking set of algebraic plane curves}, J. Knot Theory Ramifications, to appear. 

    \bibitem{kleiman} S.~Kleiman, \emph{The transversality of a general translate}, Compos. Math. \textbf{28} (1974), 287--297. 

    \bibitem{namba} M.~Namba, \emph{Branched coverings and algebraic functions}, Pitman Research Notes Mathematics Series, \textbf{161}, Longman Scientific \& Technical Harlow, John Wiley \& Sons, Inc., New York, 1987. 

	\bibitem{okaz} M.~Oka, \emph{Symmetric plane curves with nodes and cusps}, J. Math. Soc. Japan, \textbf{44}, No.3, (1992), 375--414. 

	\bibitem{oka} M.~Oka, \emph{A new Alexander-equivalent Zariski pair}, Acta Math. Vietnam., \textbf{27} (3)  (2002), 349--357. 

	\bibitem{pardini} R.~Pardini, \emph{Abelian covers of algebraic varieties}, 
	J. Reine Angew. Math. \textbf{417} (1991), 191--213. 

	\bibitem{shimada} I.~Shimada, \emph{Equisingular families of plane curves with many connected components}, Vietnam J. Math., \textbf{31} (2003), no. 2, 193--205. 

    \bibitem{shirane} T.~Shirane, \emph{A note on splitting numbers for Galois covers and $\pi_1$-equivalent Zariski $k$-plets}, Proc. Amer. Math. Soc. \textbf{145} (2017), no. 3 1009--1017. 

    \bibitem{shirane2} T.~Shirane, \emph{Connected numbers and the embedded topology of plane curves}, Canad. Math. Bull., to appear. 

    \bibitem{terras-stark1} H.M.~Stark, A.A.~Terras, 
    \emph{Zara functions of finite graphs and coverings}, 
    Adv. Math. \textbf{121} (1974), 124--165.
    
    \bibitem{terras-stark2} H.M.~Stark, A.A.~Terras, 
    \emph{Zara functions of finite graphs and coverings II}, 
    Adv. Math. \textbf{154} (2000), 132--195.

    \bibitem{terras-stark3} H.M.~Stark, A.A.~Terras, 
    \emph{Zara functions of finite graphs and coverings III}, 
    Adv. Math. \textbf{208} (2007), 467--489.

	\bibitem{tokunaga} H.~Tokunaga, \emph{A remark on Artal's paper}, Kodai Math. J., \textbf{19} (1996), 207-217. 

	\bibitem{tokunaga-dihedral} H.~Tokunaga, \emph{Dihedral coverings of algebraic surfaces and their application}, Trans. Amer. Math. Soc. \textbf{352} (2000), 4007--4017.

    \bibitem{tokunaga-splitting} H.~Tokunaga, \emph{Geometry of irreducible plane quartics and their quadratic residue conics}, J. Singul. \textbf{2} (2010), 170--190. 

	\bibitem{zariski} O.~Zariski,  
	\emph{On the problem of existence of algebraic functions of two variables possessing a given branch curve}, 
	Amer. J. Math., \textbf{51} (2) (1929), 305--328.

\end{thebibliography}
\end{document}